\documentclass[11pt]{article}    
\usepackage[margin=1in]{geometry}
\usepackage{graphicx}
\usepackage{amsmath,amsfonts,amsthm,amssymb}
\usepackage{mathrsfs}
\usepackage{hyperref}
\usepackage{caption}
\usepackage{subcaption}
\usepackage{comment}
\usepackage[shortlabels]{enumitem}

\numberwithin{equation}{section}

\newtheorem{thm}{Theorem}
\newtheorem{lem}{Lemma}[section]
\newtheorem{prop}{Proposition}

\theoremstyle{definition}
\newtheorem{defn}{Definition}

\theoremstyle{remark}
\newtheorem{rem}{Remark}[section]

\usepackage{xcolor}
\newcommand{\Red}[1]{{\color{red}#1}}
\newcommand{\Blu}[1]{}

\newcommand{\pare}[1]{\left(#1\right)}

\newcommand{\Brac}[1]{\big\{#1\big\}}
\newcommand{\Pare}[1]{\big(#1\big)}
\newcommand{\PAre}[1]{\Big(#1\Big)}

\newcommand{\abs}[1]{\left\lvert #1 \right\rvert}

\newcommand{\ABs}[1]{\Big\lvert #1 \Big\rvert}

\newcommand{\dD}[2]{{\partial #1}/{\partial #2}}

\newcommand{\AnD}{\quad\text{and}\quad}
\newcommand{\AND}{\qquad\text{and}\qquad}

\newcommand{\RR}{\mathbb{R}}
\newcommand{\Ss}{\mathbb{S}}
\newcommand{\CC}{\mathbb{C}}
\newcommand{\NN}{\mathbb{N}}
\newcommand{\ZZ}{\mathbb{Z}}

\DeclareMathOperator{\supp}{supp}

\DeclareMathOperator{\sgn}{sgn}

\newcommand{\He}{\mathbf{H}}
\newcommand{\vT}{\Theta}
\newcommand{\cS}{\mathcal{C}}
\newcommand{\rn}{\rho_n}

\newcommand{\Int}{\mathcal{I}}
\newcommand{\Cp}{\mathcal{T}}
\newcommand{\Pc}{\mathcal{S}}
\newcommand{\CP}{\mathcal{U}}
\newcommand{\N}{\mathcal{N}}
\newcommand{\Gp}{\mathcal{G}}
\newcommand{\inT}{\mathscr{I}}
\newcommand{\nid}{\not\equiv}
\begin{document}
\title{Finiteness of Nonscattering Wavenumbers for Herglotz Incident Waves}

\author{Jingni Xiao \footnote{Department of Mathematics, Drexel University, Philadelphia}}

\maketitle


\begin{abstract}
	This paper continues the study initiated in \cite{VogXia25} on nonscattering phenomena for inhomogeneous media. We investigate star-shaped domains in $\mathbb{R}^2$ and establish finiteness results for nonscattering wavenumbers associated with Herglotz incident waves of fixed density. First, for ellipses \Blu{we establish finiteness for all constant contrasts $q\neq 1$, removing the geometric restrictions required in previous work.} Second, for admissible star-shaped domains with $q\in(0,1)$, we introduce a flexible interval-wise geometric framework that unifies and generalizes earlier finiteness results. Our results reveal that infinite sequences of nonscattering wavenumbers are tied to exact radial symmetry and cannot persist under admissible geometric perturbations. 

	\noindent\textbf{Keywords}. Nonscattering wavenumbers, scattering theory, transmission eigenvalues, Herglotz waves, oscillatory integrals, star-shaped domains.
\end{abstract}

\section{Introduction}
We consider the time-harmonic scattering problem generated by an inhomogeneous medium:
\begin{equation*}
	\begin{cases}
		\Delta u + k^2 q u =0,		&\qquad		\mbox{in $\RR^2$},
		\\
		\displaystyle\lim_{r\to\infty}\sqrt{r}\pare{\partial_ru^s-iku^s}=0,\quad r=|x|,&\qquad\mbox{uniformly for all $x/|x|\in\Ss^1$},
	\end{cases}
\end{equation*}
where $k$ is the wavenumber and $q$ is a bounded medium coefficient with compactly supported contrast $q-1$. The total field $u=u^i+u^s$ consists of an incident field $u^i$ satisfying $\Delta u^i+k^2u^i=0$ and a scattered field $u^s$ satisfying the Sommerfeld radiation condition. We assume that there exists a bounded Lipschitz domain $\Omega$ such that $\Omega^e:=\RR^2\setminus\overline{\Omega}$ is connected, $q\neq 1$ in $\Omega$, and $q=1$ in $\Omega^e$. We shall refer to $(q;\Omega)$ as the inhomogeneous medium.

We investigate the \emph{nonscattering phenomenon}, namely the existence of wavenumbers $k$ and incident fields $u^i$ for which the scattered field vanishes identically in $\Omega_e$ \cite{LukD07,AleR11,BPS14corner,Blas18,VogX21,BlLi21,CakVog23,SalS21,CaVoX23,VogXia25,KowSS24,KowLSS24}. In this case the medium $(q;\Omega)$ is said to be \emph{nonscattering} with respect to $u^i$, and $k$ is called a nonscattering wavenumber. Understanding whether such wavenumbers exist, and how they depend on the geometry of $\Omega$ and the contrast $q$, is a central problem in scattering theory and inverse scattering.

The existence of nonscattering wavenumbers depends delicately on properties of $q$ (and the geometry of its support). In radially symmetric configurations, \emph{infinite} sequences of nonscattering wavenumbers are known to exist \cite{CoM88,CCH23book}. In this case, for \emph{each} such $k$, there exist infinitely many nonscattering Herglotz incident waves of the form
\begin{equation*}\label{herglotz}
	\He [k, \phi](x)=\int_{-\pi}^{\pi}\phi(\xi) e^{ik\xi\cdot x}d\vT_{\xi},
\end{equation*}
which are referred to as the Herglotz wave functions with density $\phi$. Hereinafter, we denote $\vT_{\xi}$ as the angular coordinate of $\xi$ for any unit vector $\xi\in\Ss^1$. In contrast, domains with corners or geometric singularities generically scatter all incident waves, that is, there are \emph{no} nonscattering wavenumbers in this situation. This result is proven in \cite{BPS14corner} for convex corners of a straight angle with $q$ a ``H\"{o}lder-like'' function in $\supp (q-1)$ satisfying $q-1\neq 0$ at the corner. Sequential generalizations of this result to the cases for convex or concave corners of arbitrary angles as well as $q-1$ vanishing to higher orders at the corner can be found in, for example, \cite{PaSaVe17,ElHu18,Xiao22}; and extensions to other models including the Maxwell system, Lam\'e system, anisotropic media, conducting boundaries, and source scattering, etc., are established in, for instance, \cite{LiXi17,Blas18,BlLi19,BlVe20,BlLiXi21,CaXi21,DiCaLi21,BlLi21,BlPo22,CakHLV25,DiGeTa25}. Note that the corner scattering result in \cite{BPS14corner} is implicitly stated in an earlier work \cite{KusS03} for any convex corner but with the restriction that $(q-1)u^i\neq 0$ at the corner. More recently work on the non-existence of nonscattering waves focused on the regularity of $q$ and support of $q-1$. It is shown that regular $q$ but with irregular shape of $\supp(q-1)$ always scatter incident waves satisfying $(q-1)u^i\neq 0$ at the irregular boundary point of $\supp(q-1)$ \cite{SalS21,CakVog23,KowLSS24}. These results are later extended to the case of anisotropic media in \cite{CaVoX23,KowSS24}. For smooth media with constant $q$, it is shown in \cite{HovV25anal} incident waves satisfying certain non-degeneracy condition depending on $\Omega$ always scatters provided that $\Omega$ is piecewise real-analytic with some other admissible conditions. In \cite{HovV25plane}, it is shown that plane waves always scatter for a large class of geometry $\Omega$ with constant $q$.

The finiteness of nonscattering wavenumbers is closely related to the spectral theory of interior transmission eigenvalues. While the discreteness as well as the existence of transmission eigenvalues has been extensively studied, much less is known about the finiteness of nonscattering energies. It is proven in \cite{VogX21} that there are at most finitely many nonscattering plane incident waves with a fixed incident direction for convex domains. In \cite{VogXia25}, it is shown that given a star-shaped domain $\Omega$ and a $C^1$ density function $\phi$, there are at most finitely many wavenumbers $k$ such that $(q;\Omega)$ is nonscattering under some admissible conditions. These results are proven for the case when $q>1$. 

The present work extends the finiteness results established in \cite{VogXia25} to the regime $q\in(0,1)$ and relaxes the geometric constraints on the star-shaped domain. For ellipses, we prove finiteness for both $q>1$ and $0<q<1$, improving the results of \cite{VogXia25} and removing additional geometric restrictions in the case $q>1$. More generally, for admissible $C^2$ star-shaped domains, we establish finiteness under broader geometric configurations of the radius function. \Blu{A key new feature of the present work is that the geometric assumptions are localized, allowing different portions of the boundary to satisfy different structural conditions.} While radially symmetric media admit infinitely many nonscattering wavenumbers, we show that this behavior collapses to finiteness for a broad class of star-shaped domains. 
This establishes a geometric rigidity principle: infinite nonscattering sequences are unstable under perturbations of the boundary away from circular symmetry. 

The proofs of the main results begin with a boundary integral identity characterizing nonscattering wavenumbers. For Herglotz incident waves, this identity reduces to a family of oscillatory integrals depending on the angular parameter of the auxiliary plane wave. The core of the analysis consists of studying the large-$k$ asymptotics of these integrals. Under the admissibility assumptions, the phase function possesses only nondegenerate critical points whose contributions can be controlled via stationary phase methods. In the ellipse case, the geometry allows a more refined analysis of critical points associated with focal structure, leading to sharper finiteness results.

\subsection{Main Results}
Throughout this paper, we assume that $q$ is a positive constant distinct from $1$ in $\Omega$. Our first result concerns ellipses with one of the foci located at the origin.
\begin{thm}\label{thm:ellipse2focus}
	Let $\Omega$ be an ellipse whose one focus is located at the origin. Let 
	$\phi\in C^1(\Ss^1)$ be nontrivial. Assume that either
	\begin{enumerate}[(i)]
		\item $\phi$ is real-analytic on $\Ss^1$, or,
		\item\label{cond:ecc} the eccentricity $e$ of $\Omega$ satisfies $1/q\le 1+1/{e^2}$.
	\end{enumerate} 
	Then for the medium $(q;\Omega)$, the set of nonscattering wavenumbers with respect to the incident wave $u^i=\He[k,\phi]$ is at most finite.
\end{thm}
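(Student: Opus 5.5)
The plan is to reduce nonscattering to a family of oscillatory integrals and show, via their large-$k$ asymptotics, that these cannot all vanish once $k$ is large. Finiteness then follows from discreteness of nonscattering wavenumbers on bounded sets.

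\textbf{Step 1: integral identity.} If $k$ is nonscattering, then $u^s=0$ in $\Omega^e$. Hence $w:=u-u^i\in H^2_0(\Omega)$ and $\Delta w+k^2qw=-k^2(q-1)u^i$. Pairing with any solution $v$ of $\Delta v+k^2qv=0$ in $\Omega$ and integrating by parts gives
\begin{equation*}
\int_\Omega (q-1)\,u^i\,v\,dx=0 .
\end{equation*}
Since $q$ is constant, we may drop the factor $q-1$. We take $v=e^{ik\sqrt q\,\eta\cdot x}$ with $\eta\in\Ss^1$ as the auxiliary plane wave. Inserting $u^i=\He[k,\phi]$ and applying the divergence theorem to $e^{ik(\xi+\sqrt q\eta)\cdot x}$ is legitimate because $|\xi+\sqrt q\eta|\ge|1-\sqrt q|>0$. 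This yields, for every $\eta\in\Ss^1$,
\begin{equation*}
F_k(\eta):=\int_{-\pi}^{\pi}\int_{\partial\Omega}\phi(\xi)\,\frac{\nu(x)\cdot(\xi+\sqrt q\eta)}{|\xi+\sqrt q\eta|^2}\,e^{ik(\xi+\sqrt q\eta)\cdot x}\,ds_x\,d\vT_\xi=0 .
\end{equation*}
We parametrize $\partial\Omega$ in polar coordinates about the focus as $r(\theta)=p/(1+e\cos\theta)$. This turns $F_k(\eta)$ into a two-dimensional oscillatory integral in $(\vT_\xi,\theta)$ with phase
\begin{equation*}
\Phi(\vT_\xi,\theta)=r(\theta)\,\big(\xi+\sqrt q\eta\big)\cdot\omega_\theta,\qquad \omega_\theta=(\cos\theta,\sin\theta).
\end{equation*}
The map $k\mapsto F_k(\eta)$ is analytic. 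Since nonscattering wavenumbers are contained in the discrete set of transmission eigenvalues, it suffices to show that for every sufficiently large $k$ some $\eta$ gives $F_k(\eta)\neq0$.

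\textbf{Step 2: critical points and stationary phase.} Stationarity in $\vT_\xi$ forces $\xi\perp\partial_{\vT}\xi$ to be parallel to $\omega_\theta$, that is, $\xi=\pm\omega_\theta$. Stationarity in $\theta$ then says that the normal $\nu$ at $r(\theta)\omega_\theta$ is parallel to $\pm\omega_\theta+\sqrt q\eta$.

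For the ellipse with focus at the origin, the angle between the radial direction and the normal is explicit: $\nu\parallel(1+e\cos\theta)\omega_\theta+e\sin\theta\,\omega_\theta^{\perp}$. The critical set can therefore be solved in closed form. For each sign there are at most finitely many solutions $\theta$, and I would compute the Hessian determinant of $\Phi$ at each of them. The claim to verify is that, for $\eta$ outside a finite exceptional set, all critical points are nondegenerate.

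Stationary phase then gives
\begin{equation*}
F_k(\eta)=\frac{2\pi}{k}\sum_j c_j(\eta)\,\phi(\xi_j)\,e^{ik\Phi_j(\eta)}+O(k^{-3/2}),
\end{equation*}
where each $c_j(\eta)$ is explicit and nonzero. The $C^1$ regularity of $\phi$ is enough for the remainder bound, possibly after smoothing in $\eta$ in order to integrate against a test function.

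\textbf{Step 3: ruling out cancellation (main obstacle).} The difficulty is that several critical points, coming from the two signs $\xi=\pm\omega_\theta$ and the two tangency points, may contribute leading terms that cancel for infinitely many $k$.

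The first thing I would try is to choose $\eta$, for instance near the major-axis direction, so that the critical values $\Phi_j(\eta)$ are pairwise distinct and their differences vary non-trivially in $\eta$. Integrating $F_k(\eta)$ against a cutoff in $\eta$ then separates the terms by a second stationary-phase or nonstationary argument, and isolates a single term $c_j\phi(\xi_j)$.

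The eccentricity condition $1/q\le1+1/e^2$ should be exactly what guarantees the following: one family of critical points (those with $\xi=-\omega_\theta$, where $|\xi+\sqrt q\eta|$ is smallest) either does not exist or is nondegenerate with a phase that is strictly extremal. That single contribution then dominates, or at least is not resonant with the others. Under hypothesis (i) we do not need this geometric control. Since $\phi$ is real-analytic and nontrivial, $\phi(\xi_j(\eta))$ vanishes only at isolated $\eta$, so we can pick $\eta$ where the isolated leading coefficient is nonzero even when degenerate directions occur, handling finitely many degenerate $\eta$ by perturbation.

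In both cases we obtain $|F_k(\eta)|\gtrsim k^{-1}$ for all large $k$, contradicting Step 1. This leaves only finitely many nonscattering wavenumbers.
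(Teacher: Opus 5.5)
Your Steps 1 and 2 are sound and parallel the paper. Your amplitude $\nu\cdot(\xi+\sqrt q\eta)/|\xi+\sqrt q\eta|^2$ is even automatically nonzero at critical points, since there $\nu\parallel\xi+\sqrt q\eta$. The gap is in Step 3, and it has two parts.

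\textbf{The separation mechanism does not work as stated.} For all but finitely many $\eta$ there are exactly four nondegenerate critical points, with $\xi=(-1)^{l-1}\vec\theta_{j,l}$ for $j,l=1,2$. No family disappears and none dominates. At a fixed $\eta$ the leading sum $\sum c_j\phi(\xi_j)e^{ik\Phi_j(\eta)}$ may vanish for infinitely many $k$, so a pointwise bound $|F_k(\eta)|\gtrsim k^{-1}$ is not available. You have to argue by contradiction along a sequence $k\to\infty$, using many $\eta$ at once.

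Integrating $F_k$ against a plain cutoff in $\eta$ does not isolate a term either. Set $f_j(\eta):=\eta^\perp\cdot y(\theta_j)$, so that $\partial_{\vT_\eta}\Phi_j=\sqrt q f_j(\eta)$. Since $f_j$ vanishes only when $\sin\vT_\eta=0$, the $\eta$-integral is stationary only at the major-axis directions you suggest. These return information only at $\xi=(\pm1,0)$; this is the paper's proposition on \eqref{eq:intInt}.

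What works is to exploit $F_k\equiv0$ in $\eta$ by differentiating in $\vT_\eta$, or equivalently by modulating with $e^{-ik\Phi_j(\eta)}$ before integrating. The paper takes $N=0,\dots,3$ derivatives and obtains a Vandermonde system with nodes $f_{j,l}(\vT_\eta)$. The needed input is that these four nodes are pairwise distinct for all but finitely many $\vT_\eta$ (Lemma~\ref{lem:fjlEllip}). That is a genuine computation, and your phrase ``differences vary non-trivially'' assumes it without proof.

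\textbf{The conclusion in case (ii) is missing its key ingredient.} Even with all four coefficients separated, what you obtain is $\phi\big((-1)^{l-1}\vec\theta_{j,l}(\vT_\eta)\big)=0$ for all $\eta$. A nontrivial $C^1$ density may be supported on a small arc. So you must show that the images of these four maps together cover $\Ss^1$. Looking at $\eta$ near one direction and at one favored family cannot reach an arbitrary point of $\supp\phi$.

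This covering is exactly the role of $1/q\le 1+1/e^2$; it is not a dominance or non-resonance property.
\begin{itemize}
\item If $q>e^2$, each $\theta_{j,l}$ is a bijection of the circle.
\item If $q<e^2$, the ranges are proper arcs (Lemma~\ref{lem:thetajlRange}). The condition is then equivalent to $q/e\ge\sqrt{(1-q)(1-q/e^2)}$. This makes the range of $\theta_{2,1}$ contain $[-\pi/2,\pi/2]$ and the range of $\theta_{2,2}+\pi$ contain $[\pi/2,3\pi/2]$.
\end{itemize}

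Your treatment of case (i) is fine once vanishing on an open arc has been established. Case (ii), however, is not proved by the proposal as written.
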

\begin{rem}
	We note that Theorem~\ref{thm:ellipse2focus} includes the case when $q>1$, and the result is stronger than \cite{VogXia25}. \Blu{In particular, when $q>1$, Condition \ref{cond:ecc} is automatic, removing the eccentricity restriction $e^2<\sqrt{q}/(1+\sqrt{q})$ imposed in \cite{VogXia25}. For $0<q<1$, Condition \ref{cond:ecc} also permits a substantially larger range of eccentricities than those allowed in \cite{VogXia25}.}
\end{rem}

The next several results concern star-shaped domains with $q\in(0,1)$. Similar to \cite{VogXia25}, we give the definition of admissible star-shaped domains.
\begin{defn}\label{defn:star}
	Let $q\in(0,1)$. Given a bounded domain $\Omega\subset\RR^2$, we say that $\Omega$ is $C^2$ star-shaped (with respect to the origin) if $\partial \Omega$ admits a parameterization $\{\rho(\theta)\vec{\theta}:\theta\in[0,2\pi)\}$, where $\vec{\theta}=(\cos\theta,\sin\theta)^T$ and $\rho$ is positive, $2\pi$-periodic, and $C^2$ regular, called the radius function of $\Omega$.

	We say that a star-shaped domain $\Omega$ (or its radius function $\rho$) is \emph{admissible} if $\rho$ is not constant and satisfies
	\begin{equation}\label{eq:StarSmlCond0}
		|\rn'|<\frac{\sqrt{q}}{\sqrt{1-q}}\AnD
		-\frac{\sqrt{q}}{1-\sqrt{q}}<\rn''<\frac{\sqrt{q}}{1+\sqrt{q}},
	\end{equation}
	and
	\begin{equation}\label{eq:StarSmlCond2}
		\Blu{-\frac{\varrho (\theta)\Pare{1+\varrho (\theta)}}{1-q}<\rn''(\theta)<\frac{\varrho (\theta)\Pare{1-\varrho (\theta)}}{1-q}},
	\end{equation}
	for all $t$, where
	\begin{equation*}
		\rn=\ln\rho\AND\varrho(\theta):=\sqrt{q-(1-q)\rn'^2(\theta)}.
	\end{equation*}
\end{defn}
\Blu{Roughly speaking, admissibility requires that the logarithmic radius function $\rn	=\ln\rho$ vary sufficiently slowly relative to the contrast parameter $q$.
	\begin{rem}
	Note that in Definition~\ref{defn:star}, we impose assumptions on the derivatives of $\rn=\ln\rho$ rather than on the derivatives of $\rho$ itself. For instance, since $\rn'=\rho'/\rho$, our assumptions control the rate of change of the radius function relative to the radius itself, rather than the absolute rate of change of the radius. The inequalities in \eqref{eq:StarSmlCond0} impose direct bounds on $\rn'$ and $\rn''$, whereas \eqref{eq:StarSmlCond2} bounds $\rn''$ in terms of $\rn'$. 
	
	Moreover, when $|\rn'(\theta)|$ is sufficiently small, the condition \eqref{eq:StarSmlCond2} can be reduced to bounds similar to those on $\rn''$ in \eqref{eq:StarSmlCond0}. In particular, when  $|\rn'(\theta)|=0$, the inequalities in \eqref{eq:StarSmlCond2} coincide exactly with those in \eqref{eq:StarSmlCond0}. Generally, if
		\begin{equation*}
			|\rn'|^2\le\frac{(1-c_\rho)q}{1-q}\qquad\mbox{for some constant $c_\rho\in(0,1)$,}
		\end{equation*}
	then a sufficient condition for both \eqref{eq:StarSmlCond0} and \eqref{eq:StarSmlCond2} is
	\begin{equation*}
		-\frac{\sqrt{c_\rho q}(1+\sqrt{c_\rho q})}{1-q}<\rn''<\min\Brac{\frac{\sqrt{q}}{1+\sqrt{q}},\frac{\sqrt{c_\rho q}(1-\sqrt{c_\rho q})}{1-q}}.
	\end{equation*}
	
	Nevertheless, Definition~\ref{defn:star} does not impose any direct bound on the curvature. 
	For example, consider the radius function
	\begin{equation*}
		\rho(\theta)=a(1+\varepsilon\cos\theta).
	\end{equation*} 
	One can verify that the corresponding domain $\Omega$ is admissible whenever $\varepsilon$ is sufficiently small, regardless of the value of $a$. On the other hand, the curvature contains a factor of $1/a$ and therefore can become arbitrarily large at some points as $a\to0$. Thus, admissibility controls the relative variation of the radius function, but not the overall scale of the domain.
\end{rem}
}
The following result concerns Herglotz waves with \Blu{densities that are nonzero almost everywhere}.
\begin{thm}\label{thm:StarAnly}
	Let $q\in(0,1)$ be constant and let $\Omega$ be an admissible star-shaped domain. Let \Blu{$\phi\in C^1(\Ss^1)$ be nonzero almost everywhere in $\Ss^1$.} Assume one of the following holds:
	\begin{enumerate}[(i)]
		\item There exists $t_0$ such that $\rho'(t_0)\neq0$ and $\rho'(t_0)\rho'(t_0+\pi)\ge 0$,
		\item For all $t$, $|\rho'(t)|+|\rho'(t+\pi)|+ |\rho''(t)| +|\rho''(t+\pi)|>0$.
	\end{enumerate}
	Then for $(q;\Omega)$, the set of nonscattering wavenumbers with respect to  $u^i=\He[k,\phi]$ is at most finite.
\end{thm}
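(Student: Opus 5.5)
We argue by contradiction and reduce nonscattering to a family of oscillatory integrals. Suppose $k_j\to\infty$ is a sequence of nonscattering wavenumbers. Since $u^s\equiv0$ in $\Omega^e$, the total field $u$ and $u^i$ have the same Cauchy data on $\partial\Omega$. Green's identity against the auxiliary solution $v=e^{ik\sqrt q\,\eta\cdot x}$ of $\Delta v+k^2qv=0$, with $\eta\in\Ss^1$, then gives the orthogonality relation
\[
\int_{\partial\Omega}\big(\partial_\nu u^i\, v-u^i\,\partial_\nu v\big)\,ds=0,
\]
equivalently $k^2(1-q)\int_\Omega u^i v\,dx=0$ after applying the divergence theorem to $u^i$ and $v$. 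We insert $u^i=\He[k,\phi]$ and write $\Omega$ in polar form. Integrating radially from $0$ to $\rho(\theta)$ turns this into a two-dimensional oscillatory integral in $(\theta,\vT_\xi)$. Its phase is $k\rho(\theta)\,\vec\theta\cdot(\xi+\sqrt q\,\eta)$, it is divided by the factor $(\xi+\sqrt q\eta)\cdot\vec\theta$, and it carries a boundary term at $r=\rho(\theta)$; the contribution from $r=0$ is a nonoscillatory term that can be handled separately or cancels. Since $q<1$, we have $|\xi+\sqrt q\eta|\ge1-\sqrt q>0$, so the $\xi$-integral has no degenerate denominators in the radial direction.

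For each fixed $\eta$ we expand the integral asymptotically as $k\to\infty$. With $\rn=\ln\rho$, the critical point equations in $(\theta,\vT_\xi)$ reduce to conditions on $\rn'$. The bound $|\rn'|<\sqrt q/\sqrt{1-q}$ ensures that the critical points exist and are determined by $\varrho(\theta)$. The bounds on $\rn''$ in \eqref{eq:StarSmlCond0} and \eqref{eq:StarSmlCond2} ensure that the Hessian determinant is nonzero, which is the role of the quantity $\varrho(1\pm\varrho)/(1-q)$. Stationary phase then gives a leading term $k^{-1}\sum_c A_c(\eta)\phi(\xi_c)e^{ik\Psi_c(\eta)}$ plus $o(k^{-1})$, where the sum runs over finitely many critical points $c$. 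This asymptotic formula is uniform in $\eta$ because $\phi\in C^1$ and $\rho\in C^2$. The nonscattering condition forces this leading sum to vanish along $k=k_j$ for all $\eta$.

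The key step is to pick a direction $\eta$ where the leading term cannot vanish. Generically there are exactly two critical points, associated with the boundary points in directions $\theta$ and $\theta+\pi$ that are aligned with $\xi+\sqrt q\eta$. If their phases $\Psi_\pm(\eta)$ differ, then a sum $a e^{ik\Psi_+}+b e^{ik\Psi_-}$ tending to zero along $k_j$ for every $\eta$ in an open set forces $a=b=0$. To see this, vary $\eta$ slightly: $\Psi_+-\Psi_-$ changes, and an averaging or Riemann–Lebesgue argument in $\eta$ separates the two terms. This would give $\phi(\xi_c)=0$ on a set of positive measure, contradicting that $\phi\neq0$ almost everywhere. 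It therefore remains to show that $\Psi_+-\Psi_-$ is nonconstant in $\eta$, or that one amplitude dominates. The difference $\Psi_+-\Psi_-$ involves $\rho(\theta)+\rho(\theta+\pi)$ and their derivatives. If it were constant for all $\eta$, differentiating would force $\rho'(t)=-\rho'(t+\pi)$ identically, together with a matching relation for $\rho''$. Condition (i) contradicts the first relation at $t_0$. Under condition (ii), the degenerate case is where $\rho'$ and $\rho''$ vanish at both antipodal points; there a higher-order comparison is needed to show that the combined amplitude $A_+\phi(\xi_+)+A_-\phi(\xi_-)e^{i(\Psi_--\Psi_+)k}$ cannot vanish along $k_j$ for all nearby $\eta$.

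The main obstacle is this last step: converting the vanishing of a two-term exponential sum along a sequence $k_j$ into pointwise vanishing of $\phi$. This requires showing that the phase difference is genuinely nonconstant on an open set of $\eta$ under (i) or (ii), and it requires uniform control of the stationary phase remainder in $\eta$.
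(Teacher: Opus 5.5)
Your reduction to stationary phase is the right starting point, but the setup is off in ways that matter. For each $\eta$ there are four nondegenerate stationary points, not two. For each of $\xi=\pm\vec\theta$, admissibility gives exactly two roots $\mathcal{T}_{1,l}\vT_\eta,\mathcal{T}_{2,l}\vT_\eta$ of $\rn'(\theta)=h(\theta-\vT_{\eta_l})$ (Lemma~\ref{lem:4criticStarSml}). These roots sit at $t$ and $t+\pi$ only where $\rho'$ vanishes, so a phase difference ``involving $\rho(\theta)+\rho(\theta+\pi)$'' is not the relevant object. The $\vT_\eta$-derivative of the critical value at the $(j,l)$ point is $\sqrt q\,f_{j,l}(\vT_\eta)$, where $f_{j,l}=\rho(\mathcal T_{j,l}\vT_\eta)\sin(\mathcal T_{j,l}\vT_\eta-\vT_\eta)$. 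The four terms separate via $\mathcal I_k^{(N)}(\vT_\eta)=0$, $N=0,\dots,3$, at a single $\eta$; this is a $4\times4$ Vandermonde system in the $f_{j,l}$ (Lemma~\ref{lem:Vander}), and it needs no uniformity or averaging in $\eta$. Your volume-integral reduction also divides by $(\xi+\sqrt q\eta)\cdot\vec\theta$, which vanishes along a curve in $(\theta,\vT_\xi)$; the bound $|\xi+\sqrt q\eta|\ge1-\sqrt q$ does not control this projection, so splitting off the $r=0$ term is not legitimate. Use the boundary identity instead, whose amplitude $\Psi_\eta$ is nonzero at critical points (Lemma~\ref{lem:Psi=0}). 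Finally, passing from infinitely many wavenumbers to $k_j\to\infty$ needs Lemma~\ref{lem:kinfty}.

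The genuine gap is that separating phases cannot produce the contradiction. Since $\phi\neq0$ a.e.\ and the $\mathcal T_{j,l}$ are $C^1$ diffeomorphisms (Lemma~\ref{lem:critMapStar}), the Vandermonde system forces the $f_{j,l}$ to coincide in two cross pairs for a.e.\ $\vT_\eta$ in an interval $(0,\tau)$ where $\rho'\neq0$: either $f_{1,1}=f_{2,1}$ and $f_{2,2}=f_{1,2}$, or $f_{1,1}=f_{1,2}$ and $f_{2,2}=f_{2,1}$. Any other configuration forces some $\phi(\pm\vec{\mathcal T}_{j,l}\vT_\eta)$ to vanish, by Lemmas~\ref{lem:crimoreStarSml} and~\ref{lem:Vander}. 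So under your contradiction hypothesis the phase differences are constant on whole intervals, and that is the case you must refute. Your claim that such constancy forces $\rho'(t)=-\rho'(t+\pi)$ is unproved, since the paired points are $\mathcal T_{1,1}t$ and $\mathcal T_{2,1}t$ or $\mathcal T_{1,2}t$, not $t$ and $t+\pi$. It is also not what the paper derives. The same-sign pairing that (i) permits is excluded using amplitudes. In the paired case the system gives $|\phi(\mathcal T_{1,1}t)|^2=\mathcal G_{11,21}(t)|\phi(\mathcal T_{2,1}t)|^2$, or the analogue with $\mathcal G_{11,12}$, where $\mathcal G$ is the ratio of $|\det D^2\psi_\eta|/\Psi_\eta^2$ at the two paired points. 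At the zeros of $\rho'$ bounding the interval this ratio is explicit, and admissibility fixes its position relative to $1$. Normalizing $\rho'>0$ on $(0,\tau)$, one gets $\mathcal G_{11,21}(\tau)>1$ in the same-sign case, and $\mathcal G_{11,12}(0)<1$ in the opposite-sign case unless $\rho''(0)=\rho''(\pi)=0$. A maximum argument for $|\phi|$ near that endpoint then gives the contradiction. It uses compositions of the monotone maps $\mathcal T_{j,l}$, which carry $[0,\tau]$ and $[\pi,\tau+\pi]$ onto themselves or each other. This is Proposition~\ref{prop:thm:StarAnly}: $\sgn\rho'(t)=-\sgn\rho'(t+\pi)$, and $\rho''(t)=\rho''(t+\pi)=0$ whenever $\rho'(t)=0$. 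Your phrase ``or that one amplitude dominates'' points this way, but the endpoint computation and the orbit argument are the substance of the proof. This also shows your treatment of (ii) is inverted. Condition (ii) excludes points where $\rho'$ and $\rho''$ vanish at both $t$ and $t+\pi$, so there is nothing to compare ``there''. What must be shown is that nonscattering forces such a point (a zero of $\rho'$ exists by periodicity), and that is exactly the missing amplitude argument.
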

\begin{rem}
	Theorem~\ref{thm:StarAnly} is the analogous result to \cite[Theorem 6]{VogXia25}, for the case when $q\in(0,1)$. 
	We note that the conditions imposed for star-shaped domains in Theorem~\ref{thm:StarAnly} are very general, besides the ``admissible'' condition specified in Definition~\ref{defn:star}. For example, any $\rho$ such that $\rho'(t_0)>0\ge\rho'(t_0+\pi)$ satisfies the conditions in Case (i). This includes the case when \Blu{$\sgn\rho'(t)=\sgn\rho'(t+\pi)$ for all $t$, which in turn contains the case} when $\Omega$ is symmetric with respect to the origin, namely, when $\rho(t)=\rho(t+\pi)$ for all $t$.
\end{rem}
Next result concerns Herglotz incident wave with $C^1$ \Blu{density function} $\phi$ and the  star-shaped domain $\Omega$ satisfying certain geometric conditions.
	\begin{thm}\label{thm:main1Star}
		Let $q\in(0,1)$ be constant and let $\Omega$ be an admissible star-shaped domain with radius function $\rho$, and let $\phi\in C^1(\Ss^1)$ be nontrivial. Assume that there exists a union of countably many disjoint open intervals $\N=\cup_\iota(s_{\iota,1},s_{\iota,2})\subset(0,2\pi)+\theta_0$ for some $\theta_0\in(0,2\pi)$, viewed as subsets of $\RR/(2\pi\mathbb{Z})$, such that 
		$\overline{\N\cup(\N+\pi)}=\RR/(2\pi\mathbb{Z})$. Moreover, for each $\iota$, we assume
		\begin{equation*}
			\mbox{$|\rho'|>0$ in $(s_{\iota,1},s_{\iota,2})$,\qquad $\rho'(s_{\iota,1})=\rho'(s_{\iota,2})=\rho'(s_{\iota,1}+\pi)=\rho'(s_{\iota,2}+\pi)=0$,}
		\end{equation*}
		and one of the followings holds:
		\begin{enumerate}[(i)]
			\item\label{case:1} $\sgn\rho'(\cdot+\pi)=\sgn\rho'$ on $(s_{\iota,1},s_{\iota,2})$,
			\item\label{case:2} there exists $\epsilon>0$ such that $\rho''=0$ on either $(s_{\iota,1}+\pi,s_{\iota,1}+\pi+\epsilon)$ or $(s_{\iota,2}+\pi-\epsilon,s_{\iota,2}+\pi)$.
			\item\label{case:3} $|\rho''(s_{\iota,j})|+|\rho''(s_{\iota,j}+\pi)|> 0$ for some $j=1,2$.
		\end{enumerate}
		Then for $(q;\Omega)$, the set of nonscattering wavenumbers with respect to  $u^i=\He[k,\phi]$ is at most finite.
	\end{thm}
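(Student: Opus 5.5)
The plan is to follow the scheme of \cite{VogXia25}, adapted to $q\in(0,1)$. If $k$ is a nonscattering wavenumber, the scattered field vanishes outside $\Omega$, so $u$ and $u^i$ have the same Cauchy data on $\partial\Omega$. Testing against the auxiliary solutions $v=e^{ik\sqrt{q}\,\eta\cdot x}$ of $\Delta v+k^2qv=0$ and applying Green's identity yields, for every $\eta\in\Ss^1$,
\begin{equation*}
	\int_{\partial\Omega}\Pare{\partial_\nu u^i\, v-u^i\,\partial_\nu v}\,ds=0 .
\end{equation*}
Inserting $u^i=\He[k,\phi]$ and the parameterization $x=\rho(\theta)\vec{\theta}$ turns this into a double oscillatory integral in $(\theta,\vT_\xi)$ with large parameter $k$, amplitude built from $\phi$, $\rho$, $\rho'$, and phase
\begin{equation*}
	\Phi_\eta(\theta,\vT_\xi)=\rho(\theta)\Pare{\cos(\theta-\vT_\xi)+\sqrt{q}\cos(\theta-\vT_\eta)} .
\end{equation*}
Since $\phi\in C^1$ only, I would not do stationary phase in $\vT_\xi$. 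Instead, I would integrate in $\theta$ first for fixed $(\xi,\eta)$ and do stationary phase in $\theta$. The admissibility inequalities \eqref{eq:StarSmlCond0}–\eqref{eq:StarSmlCond2} on $\rn=\ln\rho$ are precisely what make the critical points in $\theta$ nondegenerate and uniformly separated, because they keep $\partial_\theta^2\Phi_\eta$ away from zero there, with margin $\varrho$. The outcome is a leading-order asymptotic of the form $k^{-1/2}\sum_j A_j(\eta)e^{ik\Psi_j(\eta)}+o(k^{-1/2})$, with $A_j$ linear in $\phi$ and $\Psi_j$ given by the boundary points where the normal is aligned with $\xi+\sqrt{q}\eta$.

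I would argue by contradiction. Suppose there are infinitely many nonscattering wavenumbers $k_n\to\infty$. Then the leading coefficients must cancel for every $\eta$ along that sequence. The goal is to choose $\eta$, depending on the interval structure $\N$, so that exactly one critical contribution, or two with distinct phases, dominates. Nonvanishing of its amplitude then contradicts the identity. Here the hypotheses on $\N$ enter. On each interval $(s_{\iota,1},s_{\iota,2})$ we have $\rho'\neq0$, and for $\eta$ in the corresponding angular range the relevant critical points lie at $\theta\in(s_{\iota,1},s_{\iota,2})$ and possibly at the antipodal points $\theta+\pi$. The endpoint conditions $\rho'=0$ at $s_{\iota,j}$ and $s_{\iota,j}+\pi$ mark where the two contributions merge. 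Case~\ref{case:1} (equal signs) gives phases that cannot cancel identically, because their difference has a definite sign. Case~\ref{case:2}, where $\rho''$ vanishes near an endpoint, lets one compute the antipodal contribution explicitly as circular and separate it. Case~\ref{case:3} separates the contributions through a second-order expansion of the phase difference at the endpoint. In each case the conclusion is $\phi=0$ on the set of directions associated with that interval. Because $\overline{\N\cup(\N+\pi)}$ is the whole circle and $\phi$ is continuous, $\phi\equiv0$, which contradicts nontriviality.

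I expect the main obstacle to be the cancellation analysis when two critical contributions share the same amplitude structure. A sequence $k_n$ could make $e^{ik_n(\Psi_1-\Psi_2)}$ conspire to cancel at a single $\eta$. To rule this out I would exploit the freedom in $\eta$: $\Psi_1-\Psi_2$ must be nonconstant in $\eta$, which follows from the three case conditions, so cancellation at all $\eta$ in an open set forces both amplitudes to vanish. Establishing this nonconstancy near the degenerate endpoints $s_{\iota,j}$ is where Cases~\ref{case:2}–\ref{case:3} are needed, and it is the step I expect to be hardest.
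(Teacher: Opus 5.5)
Your overall frame matches the paper: the boundary identity, large-$k$ asymptotics, argument by contradiction, and the final covering by $\overline{\N\cup(\N+\pi)}$. The step that does the real work, however, is missing.

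\textbf{Two preliminary problems.} First, stationary phase in $\theta$ alone does not produce a sum $\sum_jA_j(\eta)e^{ik\Psi_j(\eta)}$. The remaining $\vT_\xi$-integral is still oscillatory, and $\phi$ gets evaluated at specific points only through its stationary points $\xi=\pm\vec\theta$. So you need joint stationary phase in $(\theta,\vT_\xi)$; a $C^1$ amplitude suffices. Second, for \emph{every} $\eta$ this gives four nondegenerate critical points $(\Cp_{j,l}\vT_\eta,\Cp_{j,l}\vT_\eta+\delta_{l,2}\pi)$ contributing at the same order, so no choice of $\eta$ makes one of them dominate. You also need Lemma~\ref{lem:kinfty} to pass from infinitely many wavenumbers to $k_n\to\infty$.

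\textbf{The serious gap.} You claim that the case hypotheses make $\Psi_1-\Psi_2$ nonconstant in $\eta$. The way to use the $\eta$-freedom is $\Int_k^{(N)}=0$ for $N=0,\dots,3$. This gives a Vandermonde system whose nodes are, up to the factor $\sqrt q$, the $\eta$-derivatives of the critical phase values, $f_{j,l}(\vT_\eta)=\rho(\Cp_{j,l}\vT_\eta)\sin(\Cp_{j,l}\vT_\eta-\vT_\eta)$. Wherever $\phi$ is nonzero at a critical point, the corresponding node must coincide with another node on an open set. That means a phase difference is \emph{locally constant} there, which is exactly what you want to exclude. Nothing in (i)--(iii) rules this out. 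Hypotheses (ii) and (iii) only constrain $\rho''$ at or next to the endpoints $s_{\iota,j}$, $s_{\iota,j}+\pi$, not in the interior of the intervals. In case (i), equal signs of $\rho'$ give $f_{1,1}$ and $f_{2,1}$ the same sign, so they \emph{can} coincide. Sign conditions only decide \emph{which} nodes may pair up (Lemma~\ref{lem:Vander}).

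\textbf{What is missing: turn coincidences into information and propagate it.}
\begin{enumerate}
\item When nodes pair up, Lemma~\ref{lem:Vander} still gives $z_{1,1}+z_{2,1}=0$ and similar identities. These yield weighted modulus relations such as $|\phi(\Cp_{1,1}t)|^2=\Gp_{11,21}(t)|\phi(\Cp_{2,1}t)|^2$.
\item Where $\rho'(t+\pi)=0$, the nodes $f_{1,2},f_{2,1}$ vanish, which forces $\phi(\Cp_{1,1}t)=\phi(\Cp_{2,2}t)=0$.
\item Admissibility, specifically \eqref{eq:StarSmlCond2}, is needed not only for nondegeneracy. It makes every $\Cp_{j,l}$ a strictly increasing $C^1$ bijection (Lemma~\ref{lem:critMapStar}) of $[s_{\iota,1},s_{\iota,2}]$ onto itself or onto its antipodal copy, with $\Cp_{1,1}t=t$ only at the endpoints.
\item Compositions such as $\Cp_{2,1}\Cp_{1,1}^{-1}-\pi$ or $\Cp_{1,2}\Cp_{1,1}^{-1}-\pi$ then drive orbits toward the endpoints. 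This moves information between the interior and the endpoints, which is the only place the hypotheses bite.
\end{enumerate}

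\textbf{How each case then closes.}
\begin{enumerate}[(i)]
\item One shows $\Gp_{11,21}>1$ near the endpoint. Combined with a maximum-modulus argument along the orbit, this gives $\phi=0$ near $s_{\iota,2}$. The orbit then spreads this over the whole interval.
\item An interval where $\phi\neq0$ is pushed to intervals accumulating at an endpoint. These would have to meet $\{t:\rho'(t+\pi)\neq0\}$ arbitrarily close to that endpoint, contradicting the flatness assumption.
\item The same accumulation occurs. Differentiating the node identities at the endpoint gives relations between $\rn''(s_{\iota,j})$ and $\rn''(s_{\iota,j}+\pi)$. These are incompatible with their signs and with \eqref{eq:StarSmlCond0}.
\end{enumerate}

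Without these modulus relations and the orbit/propagation mechanism, your sketch does not yield $\phi=0$ on any interval.
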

\begin{rem}
	\Blu{Theorem~\ref{thm:main1Star} applies to a substantially broader class of star-shaped domains than those considered in \cite{VogXia25}, permitting different geometric behaviors on different portions of the boundary, including the coexistence of locally circular regions and regions governed by different geometric conditions within the same star-shaped domain. This framework unifies and generalizes the geometric conditions in the corresponding results of \cite{VogXia25} by replacing their global hypotheses on the radius function with local conditions imposed separately on a countable collection of intervals. 
		
		More precisely, the assumptions in \cite[Theorem 4]{VogXia25} correspond to a global partition of the circle into a region where $\rho' = 0$ and its antipodal complement where $\rho' \neq 0$ almost everywhere. In contrast, Case~\ref{case:2} of Theorem~\ref{thm:main1Star} only requires a local flatness condition near the endpoints of the intervals comprising $\N$, thereby substantially relaxing the geometric assumptions of \cite[Theorem 4]{VogXia25}.		
		Similarly, \cite[Theorem 5]{VogXia25} assumes $\rho' \neq 0$ a.e. on $(0,2\pi)$, together with either a global sign condition on $\rho'$ or a global nondegeneracy condition involving $\rho''$. Cases~\ref{case:1}~and~\ref{case:3} of Theorem~\ref{thm:main1Star} replace these global assumptions by analogous local conditions imposed separately on each interval $(s_{\iota,1},s_{\iota,2})$.
		Thus, Theorem~\ref{thm:main1Star} accommodates domains whose boundaries satisfy different geometric conditions in different regions, a setting not covered by the results of \cite{VogXia25}.
		
		Moreover, by adapting similar arguments used in the proof of Theorem~\ref{thm:main1Star}, one can establish analogous results for the case $q>1$ under the generalized geometric conditions of Theorems~\ref{thm:StarAnly} and \ref{thm:main1Star}. We omit the details.}

\end{rem}

\subsection{Preliminaries}\label{sec:PreAsym}
The proofs of Theorems~\ref{thm:ellipse2focus},~\ref{thm:StarAnly},~and~\ref{thm:main1Star} rely on a boundary integral characterization of nonscattering wavenumbers. For Herglotz incident waves, the nonscattering condition can be reformulated as a family of oscillatory integral identities on $\partial\Omega$. The problem therefore reduces to analyzing the large-wavenumber asymptotics of certain boundary oscillatory integrals. We now recall the fundamental identity and introduce the notation that will be used throughout the paper.

Following \cite{VogXia25}, if $(q;\Omega)$ is nonscattering with respect to the incident field $\He[k,\phi]$, then
\begin{equation*}
	\int_{\partial\Omega} \pare{ w\partial_{\nu} \He[k,\phi]- \He[k,\phi]\partial_{\nu} w } d\sigma(x)=0,
\end{equation*}
for every solution $w$ of $\Delta w + k^2 q w = 0$ in $\Omega$. Choosing the special solution $w(x)=e^{ik\sqrt{q}\,x\cdot\eta}$ yields
\begin{equation}\label{integralB}
	i{k} \int_{\partial\Omega}\int_{0}^{2\pi}
	\nu \cdot \pare{\xi-\sqrt{q}\eta} \phi(\xi)\,
	e^{ik \pare{\sqrt{q}\eta+\xi}\cdot x} ~d\vT_{\xi}\,d\sigma(x)
	=0,\quad\mbox{for all $\eta\in \Ss^1$}.
\end{equation}
Let $\Omega$ be a $C^2$ star-shaped domain with radius function $\rho$. Parameterizing the boundary by $y(\theta):=\rho(\theta)\vec{\theta}$, the identity \eqref{integralB} can be rewritten as an oscillatory integral of the form
\begin{equation}\label{eq:IntFinal}
	\Int_k=\Int_k(\vT_\eta):= \int_{0}^{2\pi} \int_{0}^{2\pi}
	\Psi_\eta(\theta,\vT_{\xi})
	\,\phi(\xi)\,e^{ik\,\psi_\eta(\theta,\vT_{\xi})} ~d\vT_{\xi}\,d\theta=0,\quad\mbox{for all $\eta\in \Ss^1$},
\end{equation}
where
\begin{equation}\label{eq:psi}
	\psi_\eta(\theta,\vT_{\xi})=\psi(\theta,\vT_{\xi};\vT_{\eta}):= \pare{\sqrt{q}\eta+\xi}\cdot y(\theta),\qquad
	\Psi_\eta(\theta,\vT_{\xi}):=-\pare{\sqrt{q} \,\eta-\xi }\cdot y'^{\perp}(\theta),
\end{equation}
and
\begin{equation*}
	y'^\perp=-\Pare{\vec{\theta}-\rn'\vec{\theta}^\perp}\rho.
\end{equation*}
Hereinafter, for any $x=(x_1,x_2)^T\in\mathbb{R}^2$, we denote $x^\perp := (-x_2,x_1)^T$. We denote by $\Int_k^{(N)}$ the $N$-th derivative of $\Int_k$ with respect to $\vT_{\eta}$. For example,
\begin{equation*}
	\Int_k^{(1)}(\vT_\eta)=ik\sqrt{q}\int_{0}^{2\pi} \int_{0}^{2\pi}
	\Pare{\eta^{\perp}\cdot y(\theta)\Psi_\eta(\theta,\vT_{\xi})
		+\frac{1}{k}\eta\cdot y'(\theta)}\phi(\xi)\,e^{ik\,\psi_\eta(\theta,\vT_{\xi})} ~d\vT_{\xi}\,d\theta.
\end{equation*}

Equation \eqref{integralB} implies the following orthogonality identities.
\begin{lem}\label{lem:starting}
	If $(q;\Omega)$ is nonscattering with respect to $\He[k,\phi]$ and $\Omega$ is star-shaped, then
	\begin{equation}\label{eq:intInt}
		\int_{0}^{2\pi}	\Int_k(\vT_{\eta})\varphi(\vT_\eta)d\vT_{\eta}=0\qquad\mbox{for all $\varphi\in L^\infty(0,2\pi)$},
	\end{equation}
	and
	\begin{equation}\label{eq:intdiff}
		\Int_k^{(N)}=0\qquad\mbox{for all $N\in\NN$}.
	\end{equation}
\end{lem}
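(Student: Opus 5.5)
The starting point is the identity \eqref{integralB}, which is assumed to hold for every $\eta\in\Ss^1$. After parameterizing $\partial\Omega$ by $y(\theta)=\rho(\theta)\vec{\theta}$, this identity says exactly that $\Int_k(\vT_\eta)=0$ for every $\vT_\eta\in[0,2\pi)$. Here we use $d\sigma(x)=|y'(\theta)|\,d\theta$ together with $\nu\,d\sigma=-y'^{\perp}d\theta$, which holds up to the orientation convention fixed in \eqref{eq:psi}. We also drop the nonzero prefactor $ik$, which is harmless for $k>0$. So the plan is first to record that $\Int_k\equiv 0$ as a function of $\vT_\eta$ on the whole circle. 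Both claims of the lemma then follow from this pointwise vanishing.

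For \eqref{eq:intInt}, since $\Int_k(\vT_\eta)=0$ for every $\vT_\eta$, the integrand $\Int_k\varphi$ vanishes identically for any $\varphi\in L^\infty(0,2\pi)$. Its integral is therefore zero, and no regularity beyond measurability of $\varphi$ is needed.

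For \eqref{eq:intdiff}, I would check that $\Int_k$ is a $C^\infty$ (in fact real-analytic) function of $\vT_\eta$, so that its derivatives are well defined and may be computed under the integral sign. The integrand depends on $\vT_\eta$ only through $\eta=(\cos\vT_\eta,\sin\vT_\eta)$, which enters linearly in $\Psi_\eta$ and in the exponent $ik\psi_\eta$. The integrand is therefore smooth in $\vT_\eta$, and each of its $\vT_\eta$-derivatives is bounded uniformly on the compact set $[0,2\pi]^2$. These bounds involve only $\rho$, $\rho'$ (which are continuous), $\phi\in C^1\subset L^\infty$, and powers of $k$. Dominated convergence then justifies differentiating under the integral any number of times, and this reproduces the formula for $\Int^{(1)}_k$ displayed above. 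Since $\Int_k$ vanishes identically on an open set (the whole circle), all of its derivatives vanish as well, which gives $\Int_k^{(N)}=0$ for all $N$.

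The only step that needs care is the bookkeeping in the reduction from \eqref{integralB} to \eqref{eq:IntFinal}, namely the orientation and sign of $y'^\perp$ and the constant factor $ik$. This is not a real obstacle, because the conclusions are homogeneous and unaffected by nonzero constants or an overall sign.
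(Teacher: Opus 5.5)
Your proposal is correct and follows the paper's own (essentially one-line) reasoning: after parameterizing $\partial\Omega$, \eqref{integralB} for every $\eta$ says exactly that $\Int_k\equiv 0$, up to a nonzero constant factor. This gives \eqref{eq:intInt} at once, and \eqref{eq:intdiff} follows from the smoothness of $\Int_k$ in $\vT_\eta$, which you justify properly by differentiating under the integral sign. One harmless nit: that differentiation actually yields $\frac{i}{k}\,\eta\cdot y'(\theta)$ in the lower-order term of $\Int_k^{(1)}$, so the paper's displayed formula (with $\frac{1}{k}$) contains a typo and is not literally reproduced.
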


We analyze the asymptotics of \eqref{eq:intInt} and \eqref{eq:intdiff} as $k\to\infty$. We recall the following stationary phase result (see, e.g., \cite{BleHan86,Evans10}).
\begin{lem}\label{lem:stationary}
	Let $\psi\in C^2(\RR^n)$ and $\omega\in C^1(\RR^n)$ be $2\pi$-periodic functions. Assume that the set of points in $[0,2\pi)^n\cap \supp \omega$ such that $\nabla\psi=0$ is finite, denoted as $\{\mathrm{x}_j:j=1,\ldots,N\}$. Suppose that $\det D^2\psi(\mathrm{x}_j)\neq 0$ for all $j=1,\ldots,N$. Then, as $k\to\infty$,
	\begin{equation*}
		\int_{(0,2\pi)^n}e^{ik\psi(\mathrm{x})}\omega(\mathrm{x})d\mathrm{x}
		=\Pare{\frac{2\pi}{k}}^{\frac{n}{2}}\text{\small$\sum_{j=1}^N
			\abs{\det D^2\psi(\mathrm{x}_j)}^{-\frac{1}{2}}$}
		e^{\frac{i\pi}4\sgn D^2\psi(\mathrm{x}_j)}e^{ik\psi(\mathrm{x}_j)}\pare{\omega(\mathrm{x}_j)+O(k^{-1})},
	\end{equation*}
	where $\sgn D^2$ is the signature of the Hessian matrix defined as the number of positive eigenvalues minus the number of negative eigenvalues of the matrix.
\end{lem}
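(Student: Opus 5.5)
The plan is to follow the classical localization argument (as in \cite{BleHan86,Evans10}): split the integral into pieces near each critical point and pieces away from all of them, treat the two kinds separately, and track the regularity of $\psi$ and $\omega$ at every step. By periodicity we regard the integral as one over the torus $\RR^n/(2\pi\ZZ)^n$, so there are no boundary terms. Because $\det D^2\psi(\mathrm{x}_j)\neq0$, each $\mathrm{x}_j$ is an isolated zero of $\nabla\psi$ and has a neighbourhood on which $D^2\psi$ stays invertible. We fix a smooth partition of unity $1=\sum_{j=0}^N\chi_j$ on the torus. Each $\chi_j$, $j\ge1$, is supported in a small ball $B_j$ around $\mathrm{x}_j$ and equals $1$ near $\mathrm{x}_j$. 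The piece $\chi_0$ vanishes near every $\mathrm{x}_j$, so on $\supp(\chi_0\omega)$ we have $|\nabla\psi|\ge c>0$. We write $I(k)=\sum_{j=0}^N I_j(k)$, where $I_j(k)=\int e^{ik\psi}\chi_j\omega$.

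\emph{Nonstationary piece.} We use the operator $L=\frac{\nabla\psi\cdot\nabla}{ik|\nabla\psi|^2}$, which satisfies $Le^{ik\psi}=e^{ik\psi}$. One integration by parts gives $I_0=-\int e^{ik\psi}\,\nabla\cdot\big(\chi_0\omega\nabla\psi/(ik|\nabla\psi|^2)\big)$. This is legitimate because $\omega\in C^1$ and $\nabla\psi\in C^1$, and it yields $I_0=O(k^{-1})$ with no further structure. The target, however, is $O(k^{-n/2-1})$. To reach it we iterate the integration by parts $\lceil n/2\rceil+1$ times. With only $C^2$/$C^1$ data the later iterations are not classical. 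We plan to make sense of them by mollifying at scale $\delta$: replace $\psi,\omega$ by $\psi_\delta,\omega_\delta$, integrate by parts freely, and balance the mollification error against the powers of $k$ gained, choosing $\delta=\delta(k)$. We must then check that this balance actually produces the exponent $n/2+1$.

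\emph{Stationary pieces.} Near $\mathrm{x}_j$ we want a change of variables $x=\mathrm{x}_j+\Phi(z)$ such that $\psi=\psi(\mathrm{x}_j)+\tfrac12\langle Az,z\rangle$, where $A$ is diagonal with entries $\pm1$ and has signature $\sgn D^2\psi(\mathrm{x}_j)$. For $\psi\in C^2$ the Morse lemma, in the Hörmander form with $\psi(x)-\psi(\mathrm{x}_j)=\langle Q(x)(x-\mathrm{x}_j),x-\mathrm{x}_j\rangle$ and $Q(x)=\int_0^1(1-t)D^2\psi(\mathrm{x}_j+t(x-\mathrm{x}_j))\,dt$ continuous, still produces such a $\Phi$. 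This $\Phi$ is $C^1$ with $D\Phi(0)$ determined by $D^2\psi(\mathrm{x}_j)$. The integral then becomes $\int e^{ik\langle Az,z\rangle/2}u_j(z)\,dz$, where $u_j=(\chi_j\omega)\circ\Phi\,|\det D\Phi|$ satisfies $u_j(0)=\omega(\mathrm{x}_j)|\det D^2\psi(\mathrm{x}_j)|^{-1/2}$. We evaluate the quadratic-phase integral by Plancherel, as the pairing of $\widehat{u_j}$ with the explicit Gaussian $c_n k^{-n/2}e^{\frac{i\pi}{4}\sgn A}e^{-i\langle A^{-1}\zeta,\zeta\rangle/(2k)}$. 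The leading term is $(2\pi/k)^{n/2}e^{\frac{i\pi}{4}\sgn A}u_j(0)$, which gives exactly the summand in the statement. The remainder is controlled by $k^{-n/2}\int|\widehat{u_j}(\zeta)|\,\min\{1,|\zeta|^2/k\}\,d\zeta$.

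\emph{Main obstacle.} The hard part is showing that this remainder is $O(k^{-n/2-1})$ under the stated hypotheses, which is exactly where the hypotheses are tight. The bound $\int|\widehat u_j||\zeta|^2<\infty$ would need roughly $u_j\in H^{s}$ with $s>n/2+2$, whereas $u_j$ is only about $C^0$–$C^1$. Here $u_j$ involves $|\det D\Phi|$, which is merely continuous. The first thing I would try is a better normal form that avoids differentiating $\Phi$. The idea is to split $u_j(z)=u_j(0)+z\cdot\nabla u_j(0)+r_j(z)$. The linear term integrates to zero against the even phase. For $r_j$ we integrate by parts once using $z\,e^{ik\langle Az,z\rangle/2}=(ik)^{-1}A^{-1}\nabla e^{ik\langle Az,z\rangle/2}$, which costs exactly one factor $k^{-1}$. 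What remains is to show that the resulting integral is $O(k^{-n/2})$, by rescaling $z=k^{-1/2}w$.

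If this step fails at the stated regularity, the fallback is the reading used in \cite{BleHan86}: the remainder is $O(k^{-1})$ relative to each leading term whenever the data are smooth near the critical points. In the paper's application this holds, since $\psi_\eta$ is analytic in $\vT_\xi$ and $\rho\in C^2$ enters only through $\theta$. Even then, I would record explicitly which derivative of $\rho$ the remainder constant depends on.
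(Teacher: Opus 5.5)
\paragraph{The lemma is not provable at the stated regularity.} The paper gives no proof of this lemma. It quotes the classical stationary phase theorem from \cite{BleHan86,Evans10}, where phase and amplitude are smooth. Finite-regularity versions, e.g.\ H\"{o}rmander's Theorem~7.7.5, need a number of derivatives that grows with $n$. Your outline (partition of unity, repeated integration by parts off the critical set, Morse normal form, Fourier evaluation of the Gaussian) is that standard proof, and it goes through under such hypotheses.

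The genuine gap is your main plan of reaching the error $O(k^{-n/2-1})$ with only $\psi\in C^2$ and $\omega\in C^1$. No argument can do this, because the statement is false at that regularity. Take $n=1$ and a smooth $2\pi$-periodic Morse function $\psi$ with $\psi(x)=x+c$ on $[1,2]$. Let $\omega=\omega_2+\chi G$, where:
\begin{itemize}
\item $\omega_2$ is a smooth bump at one critical point;
\item $\chi\in C_c^\infty(1,2)$ with $\int\chi\neq0$;
\item $G(x)=\sum_{m\ge1}m^{-2}2^{-m}\sin(2^mx)$, which is $C^1$ because $G'=\sum_m m^{-2}\cos(2^mx)$ converges uniformly.
\end{itemize}
The $\omega_2$ part obeys the lemma. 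At $k=2^M$, however, the $\chi G$ part has modulus $\frac{|\int\chi|}{2M^2 2^M}+O(2^{-ML})$ for every $L$. That is, it is of order $k^{-1}(\log k)^{-2}$, not $O(k^{-3/2})$. A $C^2$ phase with a smooth amplitude fails in the same way, after the substitution $s=\psi(x)$.

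\paragraph{Where your steps break.}
\begin{itemize}
\item \emph{Nonstationary piece.} Mollifying costs $\|\omega-\omega_\delta\|_{L^1}\lesssim\delta$, while $m$ integrations by parts of the mollified integrand cost at least $k^{-m}\delta^{1-m}$. The best choice $\delta\sim k^{-1}$ gives $O(k^{-1})$, which the example shows is essentially sharp for $C^1$ data.
\item \emph{Stationary piece.} As you note yourself, $u_j$ contains $|\det D\Phi|$ and is only continuous, so the expansion $u_j(0)+z\cdot\nabla u_j(0)+r_j$ is unavailable. Writing $r_j=z\cdot G_j$ and integrating by parts requires $u_j\in C^2$. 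What remains is a quadratic-phase integral with a low-regularity amplitude, over a region of size $k^{1/2}$ after rescaling. The claimed $O(k^{-n/2})$ bound for it is itself a stationary phase estimate, so the argument is circular.
\end{itemize}

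\paragraph{The fallback.} Your fallback is the right repair, but it needs two corrections. First, smoothness only near the critical points is not enough: the defect in the example sits away from them, so the hypotheses must be strengthened on all of $\supp\omega$. Second, the data in the paper's application are \emph{not} smooth. The phase $\psi_\eta(\theta,\vT_\xi)=(\sqrt{q}\eta+\xi)\cdot\rho(\theta)\vec\theta$ is only $C^2$ in $\theta$. The amplitude $\Psi_\eta\phi$ contains $\rho'$ and $\phi\in C^1(\Ss^1)$, so it is only $C^1$, in $\vT_\xi$ as well as in $\theta$. So the lemma has to be restated with stronger hypotheses rather than proved as written. Its uses in the paper would then need correspondingly more regularity of $\rho$ and $\phi$, or a separate argument.
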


Let us consider the asymptotics of \eqref{eq:intInt} with $\varphi\in C^1(\Ss^1)$.
We calculate that
\begin{equation*}
	\dD{\psi}{\theta}=\pare{\sqrt{q}\eta+\xi}\cdot y',\qquad
	\dD{\psi}{\vT_\xi}=\xi^{\perp}\cdot y,\AND
	\dD{\psi}{\vT_\eta}=\sqrt{q}\eta^\perp\cdot y.
\end{equation*}
Hence the stationary points of \eqref{eq:intInt}, namely, $(\theta,\vT_{\xi},\vT_{\eta})$ satisfying $$\dD{\psi}{\theta}=\dD{\psi}{\vT_\xi}=\dD{\psi}{\vT_\eta}=0,$$ are
\begin{equation*}
	(\vec{\theta},\xi,\eta)=(\vec{\theta},(-1)^{l-1}\vec{\theta},(-1)^{j-1}\vec{\theta}),\quad j,l=1,2,\qquad\mbox{with $\theta$ satisfying $\rho'(\theta)=0$}.
\end{equation*}
Moreover, the Hessian of $\psi$ at the stationary points reads
\begin{equation*}
\begin{split}
		D^2\psi&=\begin{bmatrix}
		-\pare{\sqrt{q}\eta+\xi}\cdot \vec{\theta}(1-\rn'')&\xi\cdot\vec{\theta}&\sqrt{q}\eta\cdot \vec{\theta}\\
		\xi\cdot\vec{\theta}&-\xi\cdot \vec{\theta}&0\\
		\sqrt{q}\eta\cdot \vec{\theta}&0&-\sqrt{q}\eta\cdot \vec{\theta}
	\end{bmatrix}\rho
\\&=-(-1)^{l-1}\begin{bmatrix}
	\pare{1+(-1)^{j-l}\sqrt{q}}(1-\rn'')&-1&-(-1)^{j-l}\sqrt{q}\\
	-1&1&0\\
	-(-1)^{j-l}\sqrt{q}&0&(-1)^{j-l}\sqrt{q}
\end{bmatrix}\rho,
\end{split}
\end{equation*}
where we have used the fact that
\begin{equation*}
	y'=\Pare{\rn'\vec{\theta}+\vec{\theta}^\perp}\rho
	\AND y''=-\Pare{(1-\rn'^2-\rn'')\vec{\theta}-2\rn'\vec{\theta}^\perp}\rho.
\end{equation*}
Thus at any stationary point we have
\begin{equation*}
	\det D^2\psi=-(-1)^{j-l}\sqrt{q}\Pare{1+(-1)^{j-l}\sqrt{q}}\rn''(\theta)\rho^2(\theta)	,
\end{equation*}
and $D^2\psi$ is singular if and only if $\rho''(\theta)=\rho'(\theta)=0$. Applying the method of stationary phase, we show that
\begin{prop}
	Denote $\tilde{\mathcal{C}}:=\{t\in[0,2\pi):\rho'(t)=0\}$ and $\tilde{\mathcal{C}}_0:=\{t\in\tilde{\mathcal{C}}:\rho''(\theta)\neq 0\}$. Assume 
	that \eqref{eq:intInt} holds true for a sequence of $k$'s tending to infinity. Then
	\begin{equation*}
		\phi(\vec{t}\,)=\phi(-\vec{t}\,)=0,\qquad\mbox{for any $t$ which is an isolated point in $\tilde{\mathcal{C}}_0$}.
	\end{equation*}
\end{prop}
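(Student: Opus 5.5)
The idea is to localize \eqref{eq:intInt} near one isolated point and read off its leading stationary phase term. Fix $t_*$, an isolated point of $\tilde{\mathcal{C}}_0$, so $\rho'(t_*)=0\neq\rho''(t_*)$ and hence $\rn''(t_*)\neq0$. I choose the test function $\varphi$ in \eqref{eq:intInt} of a special product form. Because $\varphi$ is a function of $\vT_\eta$ alone, I cannot cut off in $\theta$ through $\varphi$. Instead I first note that the stationary points in $\theta$ are forced to lie in $\tilde{\mathcal{C}}$. The $\vT_\eta$-stationarity $\eta^\perp\cdot y(\theta)=0$ forces $\eta=\pm\vec{\theta}$. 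Hence if $\varphi$ is supported in a small neighbourhood $U$ of $\{t_*,t_*+\pi\}$ (in $\vT_\eta$), every stationary point of the three-variable phase $\psi(\theta,\vT_\xi,\vT_\eta)$ on $\supp(\Psi_\eta\phi\varphi)$ has $\theta\in(U\cup(U+\pi))\cap\tilde{\mathcal C}$. Since $t_*$ is isolated in $\tilde{\mathcal C}_0$, I shrink $U$ so that the only such $\theta$ is $t_*$ (and possibly $t_*+\pi$, handled below). This requires $t_*$ to be isolated in $\tilde{\mathcal C}$ near it as well. Points of $\tilde{\mathcal C}$ with $\rho''\neq0$ are automatically isolated in $\tilde{\mathcal C}$, since the zeros of $\rho'$ are simple there, so this follows from $\rho''(t_*)\neq0$.

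To avoid the $\theta=t_*+\pi$ contributions interfering, I take $\varphi$ supported near $t_*$ only, and separately near $t_*+\pi$. Then I insert a smooth partition of unity in $\theta$ and $\vT_\xi$ (not multiplying the integrand, just splitting it). Pieces away from the stationary set contribute $O(k^{-\infty})$, or at least $o(k^{-3/2})$, by integration by parts, using the $C^1$ regularity of $\phi$ and $C^2$ of $\rho$. The remaining piece near $(t_*,\pm\vec t_*,\pm\vec t_*)$ is treated by Lemma~\ref{lem:stationary} with $n=3$. The amplitude is $\Psi_\eta\phi(\xi)\varphi(\vT_\eta)$. At the stationary point $\xi=(-1)^{l-1}\vec{t}_*$, $\eta=(-1)^{j-1}\vec t_*$, and $y'^\perp=-\rho\vec t_*$ there since $\rn'=0$. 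So $\Psi_\eta=\rho(t_*)\big((-1)^{j-1}\sqrt q-(-1)^{l-1}\big)$, which is nonzero because $q\neq1$. The Hessians are nondegenerate by the determinant formula given above.

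Thus, for $\varphi$ concentrated near $t_*$ (so $j=1$), the leading term is
\[
\Big(\tfrac{2\pi}{k}\Big)^{3/2}\varphi(t_*)\rho(t_*)\sum_{l=1,2}\frac{\big(\sqrt q-(-1)^{l-1}\big)e^{\frac{i\pi}{4}\sgn_l}}{\abs{\det D^2\psi_l}^{1/2}}\phi\big((-1)^{l-1}\vec t_*\big)e^{ik\rho(t_*)(\sqrt q+(-1)^{l-1})}.
\]
The two phases are $k\rho(\sqrt q+1)$ and $k\rho(\sqrt q-1)$, and they differ by $2k\rho(t_*)$. This expression must vanish up to $O(k^{-5/2})$ along the sequence $k_m\to\infty$. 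Here lies the main obstacle: a sum $A e^{ik\alpha}+Be^{ik\beta}$ with $\alpha\ne\beta$ can vanish for a sequence of $k$ without $A=B=0$, namely when $|A|=|B|$ and $k$ hits the right arithmetic progression. To break this I will also use \eqref{eq:intdiff} and derivative information. Replacing $\varphi$ by $\varphi_\eta$-weighted versions does not change the frequencies. Instead I will use $\Int_k^{(1)}=0$, or equivalently test \eqref{eq:intInt} with $\varphi$ multiplied by nonconstant factors. A cleaner route is the following. Vanishing for all $\varphi$ supported near $t_*$ gives, at the level of the leading term, a relation at each $\vT_\eta$ near $t_*$, so it holds along a whole neighbourhood with $t_*$ replaced by the corresponding critical point. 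That point is unique, however. So I will instead exploit that the ratio $|A/B|$ is fixed. The amplitudes are $\frac{1-\sqrt q}{|\det_1|^{1/2}}|\phi(\vec t_*)|$ versus $\frac{1+\sqrt q}{|\det_2|^{1/2}}|\phi(-\vec t_*)|$. I would then compare with the analogous identity from $\Int_k^{(2)}=0$, which multiplies the two terms by different factors $(ik\sqrt q\,\eta^\perp\cdot y)'$-type weights. At the critical point these factors are proportional to the Hessian entries $-\sqrt q\eta\cdot\vec\theta\,\rho$, and they differ between $l=1,2$ relative to the $A,B$ weights. The two resulting linear equations in $(Ae^{ik\alpha},Be^{ik\beta})$ have a nonzero determinant, which forces $A=B=0$, i.e.\ $\phi(\vec t_*)=\phi(-\vec t_*)=0$. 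Checking that this determinant is nonzero, using $\rn''(t_*)\ne0$ and $q\neq1$, is the computation I expect to be delicate.
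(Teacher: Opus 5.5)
Your localization in $\vT_\eta$, the identification of the stationary set, the values $\Psi_\eta=\rho(t_*)\bigl((-1)^{j-1}\sqrt q-(-1)^{l-1}\bigr)$ and the two-term leading expansion all agree with the paper. The step you use to finish, however, does not work.

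Testing $\Int_k^{(N)}=0$ against a localized $\varphi$ is, after integrating by parts in $\vT_\eta$ (everything is $2\pi$-periodic), the same as testing $\Int_k=0$ against $(-1)^N\varphi^{(N)}$. Its leading term is therefore $(2\pi/k)^{3/2}\varphi^{(N)}(t_*)\bigl(Ae^{ik\alpha}+Be^{ik\beta}\bigr)$, a scalar multiple of the relation you already have. In terms of your own computation, three things go wrong:
\begin{itemize}
\item the weight $(ik\sqrt q\,\eta^\perp\cdot y)^N$ vanishes at the stationary points, since this is exactly the $\vT_\eta$-stationarity condition;
\item the order-$k$ contributions from $ik\,\partial_{\vT_\eta}^2\psi$ and from $(ik\,\partial_{\vT_\eta}\psi)^2$ cancel exactly;
\item $\partial_{\vT_\eta}^2\psi=-\sqrt q\,\eta\cdot\vec\theta\,\rho$ depends on the sign of $\eta$ but not on that of $\xi$, so it cannot weight the $l=1$ and $l=2$ terms differently.
\end{itemize}
So the determinant you expect to be nonzero is zero. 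No independent second equation is available at the same localization, and going to the next order in $k$ is not available at the regularity $\phi\in C^1$, $\rho\in C^2$.

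The missing idea is that the obstruction you identified is itself the usable information. Since $|Ae^{ik\alpha}+Be^{ik\beta}|\ge\bigl||A|-|B|\bigr|$, vanishing along $k_m\to\infty$ forces $|A|=|B|$. For $\eta=\vec t_*$ this reads $|1-\sqrt q|^{3/2}|\phi(\vec t_*)|=(1+\sqrt q)^{3/2}|\phi(-\vec t_*)|$.

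Now evaluate the second localization you set up but never used. With $\varphi$ concentrated at $\vT_\eta=t_*+\pi$ (so $\eta=-\vec t_*$), the point $\xi=\vec t_*$ now carries $|\Psi_\eta|=\rho(1+\sqrt q)$ and $|\det D^2\psi|\propto|1-\sqrt q|$, while $\xi=-\vec t_*$ carries the reverse. This gives $(1+\sqrt q)^{3/2}|\phi(\vec t_*)|=|1-\sqrt q|^{3/2}|\phi(-\vec t_*)|$.

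Set $c=\bigl((1+\sqrt q)/|1-\sqrt q|\bigr)^{3/2}>1$. The two relations say $|\phi(\vec t_*)|=c|\phi(-\vec t_*)|$ and $|\phi(-\vec t_*)|=c|\phi(\vec t_*)|$, so both values vanish. This is exactly the paper's proof.

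A smaller point: restricting $\supp\varphi$ near $\vT_\eta=t_*$ does not remove stationary points with $\theta=t_*+\pi$, $\eta=-\vec\theta=\vec t_*$ when $\rho'(t_*+\pi)=0$. Your claim that this localization keeps the $t_*+\pi$ contributions out is therefore not correct as stated. The paper's two-term expansion tacitly excludes that case too. If it occurs, four frequencies appear and the modulus argument does not apply directly.
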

\begin{proof}
	Given $\theta_0\in \tilde{\mathcal{C}}_0$, let $\varphi$ be a $2\pi$-periodic $C^\infty(\RR)$ function such that $\varphi(\theta_0)=1$ and $\varphi=0$ in a neighborhood of $\tilde{\mathcal{C}}\cup(\tilde{\mathcal{C}}+\pi)\backslash\{\theta_0\}$. Then the asymptotics of \eqref{eq:intInt} as $k\to \infty$ reads
	\begin{equation*}
		-\frac{(2\pi/k)^{\frac{3}{2}}}{q^{\frac{1}{4}}\abs{\rn''(\theta_0)}^{\frac{1}{2}}}
		\text{\small$\sum_{l=1}^{2}(-1)^{l-1}\frac{\pare{1-(-1)^{l-1}\sqrt{q}}}{\abs{1+(-1)^{l-1}\sqrt{q}}^{\frac{1}{2}}}
			\phi((-1)^{l-1}\vec\theta)e^{\frac{i\pi}4\sgn D^2\psi+i\pare{(-1)^{l-1}+\sqrt{q}}k}
			+O\Pare{k^{-5/2}}$},
	\end{equation*}
	where $\delta$ denotes the Kronecker delta function.
	Since \eqref{eq:intInt} is valid for $k\to\infty$, we must have
	\begin{equation*}
		\frac{\abs{1-\sqrt{q}}}{\abs{1+\sqrt{q}}^{\frac{1}{2}}}|\phi(\vec\theta_0)|
		=\frac{1+\sqrt{q}}{\abs{1-\sqrt{q}}^{\frac{1}{2}}}|\phi(-\vec\theta_0)|.
	\end{equation*}
	Hence $|\phi(\vec\theta_0)|>|\phi(-\vec\theta_0)|$ or $|\phi(\vec\theta_0)|=|\phi(-\vec\theta_0)|=0$.
	On the other hand, if we take $\varphi$ to be a $2\pi$-periodic $C^\infty(\RR)$ function satisfying $\varphi(t_0+\pi)=1$ and $\varphi=0$ in a neighborhood of $\tilde{\mathcal{C}}\cup(\tilde{\mathcal{C}}+\pi)\backslash\{t_0\pm\pi\}$. Then the asymptotics of \eqref{eq:intInt} as $k\to \infty$ becomes
	\begin{equation*}
		-\frac{(2\pi/k)^{\frac{3}{2}}}{q^{\frac{1}{4}}\abs{\rn''(t_0)}^{\frac{1}{2}}}
		\text{\small$\sum_{l=1}^{2}(-1)^{l-1}\frac{\pare{1+(-1)^{l-1}\sqrt{q}}}{\Pare{1-(-1)^{l-1}\sqrt{q}}^{\frac{1}{2}}}
			\phi((-1)^{l-1}\vec\theta_0)e^{\frac{i\pi}4\sgn D^2\psi+i\pare{(-1)^{l-1}-\sqrt{q}}k}
			+O\Pare{k^{-5/2}}$}.
	\end{equation*}
	Similar as before, we obtain that $|\phi(\vec\theta_0)|<|\phi(-\vec\theta_0)|$ or $|\phi(\vec\theta_0)|=|\phi(-\vec\theta_0)|=0$, and thus the latter must hold true.
\end{proof}

Let us now turn to the asymptotics of \eqref{eq:intdiff}. Denote $\cS=\cS_{\vT_\eta}$ as the set of stationary points $(\theta,\vT_{\xi})\in[0,2\pi)^2$ such that $\dD{\psi_\eta}{\theta}=\dD{\psi_\eta}{\vT_{\xi}}=0$. We obtain from straightforward calculation (see also \cite{VogXia25}) that $(\theta,\vT_{\xi})\in\cS_{\vT_\eta}$ if and only if
\begin{equation}\label{eq:criticalStar}
	\xi=(-1)^{l-1}\vec{\theta}\AnD
	(\sqrt{q}\eta_l\cdot\vec{\theta}+1)\rn'(\theta)=\sqrt{q}\eta_l^{\perp}\cdot\vec{\theta}
	,\qquad\mbox{for $l=1$ or $2$},
\end{equation}
where $\eta_l:=(-1)^{l-1}\eta$.
Given $\vT_{\eta}$, assume for the time being that $\cS_{\vT_\eta}$ is nonempty and finite, and that $\det D^2\psi_\eta(\theta,\vT_\xi)\neq 0$ for all $(\theta,\vT_{\xi})\in\cS_{\vT_\eta}$, where $D^2\psi_\eta$ is the Hessian
\begin{equation*}\label{eq:Hessian}
	D^2\psi_\eta(\theta,\vT_{\xi})=\begin{bmatrix}
		\pare{\sqrt{q}\,\eta+\xi}\cdot y''(\theta)& \xi^{\perp}\cdot y'(\theta)
		\\
		\xi^{\perp}\cdot y'(\theta)&-\xi\cdot y(\theta)
	\end{bmatrix}.
\end{equation*}
For $(\theta,\vT_{\xi})\in\cS_{\vT_\eta}$ with $\xi=(-1)^{l-1}\vec{\theta}$ we have
\begin{equation}\label{eq:HessianStat}
	D^2\psi_\eta
	=-(-1)^{l-1}\begin{bmatrix}
		(\sqrt{q}\eta_l\cdot\vec{\theta}+1)(1+\rn'^2-\rn'')&-1
		\\
		-1&1
	\end{bmatrix}\rho.
\end{equation}
Applying Lemma~\ref{lem:stationary} we obtain that
\begin{equation*}\label{eq:asympGeneral}
	\Int_k(\vT_\eta) = \frac{2 \pi}{k}  \text{\small$ \sum_{(\theta,\vT_\xi)\in\cS}
		\frac{\Psi_\eta e^{\frac{i\pi}4\sgn D^2\psi_\eta+ik\psi_\eta}}{\abs{\det D^2\psi_\eta}^{1/2}}|_{(\theta,\vT_\xi)}\phi(\xi)$}
	+O\Pare{\frac{1}{k^2}},\qquad\mbox{as $k\to\infty$}.
\end{equation*}
Similarly, for the derivatives of $\Int_k$ we have for all $N\in\NN$,
\begin{equation}\label{eq:asympGen_diffeta}
	\text{\small$\Int_k^{(N)}(\vT_\eta)=
	\frac{2\pi}{k}\pare{ik\sqrt{q}}^N \sum_{(\theta,\vT_\xi)\in\cS}
			\frac{\Psi_\eta e^{\frac{i\pi}4\sgn D^2\psi_\eta+ik\psi_\eta}}{\abs{\det D^2\psi_\eta}^{1/2}}|_{(\theta,\vT_\xi)}
		\Pare{\eta^{\perp}\cdot y(\theta)}^N\phi(\xi)
		+O(k^{N-2})$},
\end{equation}
as $k\to\infty$.
We will also need the following two results. The first is proven in \cite{VogXia25}; the second is a direct consequence of properties of transmission eigenvalues, see \cite{CCH23book}.
\begin{lem}\label{lem:Psi=0}
Given $\eta \in \Ss^1$, if $\theta,\vT_\xi$ satisfy $\dD{\psi_{\eta}}{\theta}=0$, then $\Psi_{\eta}(\theta,\vT_\xi)\neq 0$.
\end{lem}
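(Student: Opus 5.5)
The plan is a direct linear-algebra argument at a single boundary point, using only that $|\sqrt{q}\,\eta|=\sqrt{q}\neq 1=|\xi|$. I would fix $\eta\in\Ss^1$ and $(\theta,\vT_\xi)$ with $\dD{\psi_\eta}{\theta}=0$, and argue by contradiction, assuming that also $\Psi_\eta(\theta,\vT_\xi)=0$.

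First, note that the tangent vector does not vanish. From the formula $y'=\Pare{\rn'\vec{\theta}+\vec{\theta}^\perp}\rho$ we get
\begin{equation*}
|y'(\theta)|^2=\rho^2(1+\rn'^2)>0,
\end{equation*}
since $\rho>0$. Therefore $\tau:=y'(\theta)/|y'(\theta)|$ and $\tau^\perp$ form an orthonormal basis of $\RR^2$, and $y'^\perp=|y'|\,\tau^\perp$.

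Next, write $a:=\sqrt{q}\,\eta$ and $b:=\xi$, and expand both in this basis as $a=a_1\tau+a_2\tau^\perp$ and $b=b_1\tau+b_2\tau^\perp$. The two assumed identities then read:
\begin{itemize}
\item $\dD{\psi_\eta}{\theta}=(a+b)\cdot y'=0$, which gives $a_1=-b_1$;
\item $\Psi_\eta=-(a-b)\cdot y'^\perp=0$, which gives $a_2=b_2$.
\end{itemize}
Consequently
\begin{equation*}
q=|a|^2=a_1^2+a_2^2=b_1^2+b_2^2=|\xi|^2=1,
\end{equation*}
which contradicts the standing assumption $q\neq1$. Hence $\Psi_\eta(\theta,\vT_\xi)\neq0$.

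I do not expect a real obstacle here. The only points to check are that $y'\neq0$, which follows from $\rho>0$ as shown above, and that the paper's $y'^\perp$ is the genuine rotation of $y'$ by $\pi/2$. The latter is true by the convention $x^\perp=(-x_2,x_1)^T$, and it is consistent with the stated formula $y'^\perp=-\Pare{\vec{\theta}-\rn'\vec{\theta}^\perp}\rho$.
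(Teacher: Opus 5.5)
Your proof is correct and complete. The two conditions say exactly that $\sqrt{q}\,\eta$ is the reflection of $\xi$ across the normal line to $\partial\Omega$ at $y(\theta)$. Equivalently, $\sqrt{q}\,\eta+\xi\perp y'$ and $\sqrt{q}\,\eta-\xi\parallel y'$, so $0=(\sqrt{q}\,\eta+\xi)\cdot(\sqrt{q}\,\eta-\xi)=q-1$. You also correctly check the one nondegeneracy the argument needs, namely $|y'|^2=\rho^2+\rho'^2>0$.

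The paper gives no proof of this lemma and simply defers to \cite{VogXia25}, so there is no in-paper argument to match. Your coordinate-free argument is self-contained and uses only $\partial\psi_\eta/\partial\theta=0$. It therefore covers arbitrary $\xi$, as the lemma is stated, and not just the stationary points $\xi=\pm\vec{\theta}$ where the paper actually applies it. It works for any boundary parametrization with nonvanishing tangent and any constant $q>0$, $q\neq 1$, so it covers both the $q>1$ and $0<q<1$ regimes used in the paper.
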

\begin{lem}\label{lem:kinfty}
	Given a bounded medium $(q;\Omega)$ satisfying $q>1$ in $\Omega$ or $0<q<1$ in $\Omega$, suppose that there are infinitely many wavenumbers $k$ such that $(q;\Omega)$ is nonscattering with respect to some incident wave $u^i_k$ with wavenumber $k$. Then the wavenumbers $k$'s accumulate at and only at infinity.
\end{lem}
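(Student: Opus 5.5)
The plan is to identify nonscattering wavenumbers with real interior transmission eigenvalues and then invoke the discreteness theory for transmission eigenvalues from \cite{CCH23book}. Suppose $(q;\Omega)$ is nonscattering at wavenumber $k>0$ with respect to $u^i_k$. Set $v:=u^i_k|_{\Omega}$ and $w:=u|_{\Omega}$, where $u=u^i_k+u^s$ is the total field. Since $u^s\equiv 0$ in $\Omega^e$, and $u^s\in H^2_{loc}$ solves $\Delta u^s+k^2u^s=k^2(1-q)u$ in $\RR^2$, the Cauchy data of $u^s$ vanish on $\partial\Omega$. Hence $w-v=u^s|_\Omega\in H^2_0(\Omega)$, and the pair satisfies
\begin{equation*}
\Delta w+k^2qw=0,\qquad \Delta v+k^2v=0\quad\mbox{in $\Omega$},\qquad w-v\in H^2_0(\Omega).
\end{equation*}
The pair is nontrivial. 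Indeed, $u^i_k$ is a nontrivial entire solution of the Helmholtz equation, hence real-analytic, so it cannot vanish on the open set $\Omega$. Therefore $k^2$ is a real transmission eigenvalue of $(q;\Omega)$.

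Next I would recall the standard reformulation from \cite{CCH23book}. Writing $\mathrm{u}:=w-v\in H^2_0(\Omega)$, the transmission problem is equivalent to the fourth-order equation
\begin{equation*}
(\Delta+k^2q)\frac{1}{q-1}(\Delta+k^2)\mathrm{u}=0\quad\mbox{in $\Omega$},
\end{equation*}
in weak form on $H^2_0(\Omega)$. The hypothesis that $q-1$ is bounded away from zero with a fixed sign (either $q>1$ or $0<q<1$) makes the principal part coercive on $H^2_0(\Omega)$, with $1/(q-1)$ or $q/(1-q)$ as the weight. The lower-order terms in $k^2$ define compact perturbations. This gives an analytic family of Fredholm operators of index zero in $\tau=k^2$.

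The key step, and the one I expect to need care, is exhibiting a value of $\tau$ at which the operator is invertible. For this I would use that $\tau$ near $0$, and more generally $\tau$ below an explicit Faber--Krahn-type bound, is not a transmission eigenvalue. The reason is that for small $\tau>0$ the quadratic form remains positive definite (respectively negative definite), using the Poincaré inequality $\|\nabla \mathrm{u}\|^2\ge \lambda_1(\Omega)\|\mathrm{u}\|^2$ for $\mathrm{u}\in H^2_0(\Omega)$. The analytic Fredholm theorem then shows that the set of transmission eigenvalues $\tau$ is discrete in $\CC$ with no finite accumulation point. The same coercivity estimate shows that real transmission eigenvalues satisfy $\tau\ge c(\Omega,q)>0$, so they are also bounded away from $0$.

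Combining the two facts, the nonscattering wavenumbers $k>0$ form a subset of $\{\sqrt{\tau}:\tau\ \mbox{a real transmission eigenvalue}\}$. This set is closed, discrete, and contained in $[\sqrt{c},\infty)$, so each bounded interval contains only finitely many nonscattering wavenumbers. Consequently, if there are infinitely many such $k$, they must be unbounded and can accumulate only at infinity, which proves the lemma. In particular, in the finiteness theorems it suffices to rule out sequences $k\to\infty$, which is exactly what the asymptotic analysis of \eqref{eq:intInt} and \eqref{eq:intdiff} is designed to do.
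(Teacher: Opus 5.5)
Your proposal is correct and takes the same route as the paper. The paper simply observes that nonscattering wavenumbers give real interior transmission eigenvalues and cites the discreteness of transmission eigenvalues (no finite accumulation point) from \cite{CCH23book}; you write out that standard argument, namely the fourth-order $H^2_0(\Omega)$ formulation, the Fredholm property, invertibility at small $\tau>0$ via the Poincar\'e inequality, and the analytic Fredholm theorem. The one implicit requirement is that $q-1$ be bounded away from zero with a fixed sign, which holds automatically here because $q$ is a constant throughout the paper.
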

\section{Ellipses}
Let $\Omega$ be an ellipse with one of its foci located at the origin. Up to a rotational change of coordinates we can express $\Omega$ as a star-shaped domain with the radius function
\begin{equation}\label{eq:rhoEllip}
	\rho(t)=\frac{a(1-e^2)}{1+e\cos t},
\end{equation}
where $e\in(0,1)$ is the eccentricity of the ellipse and $a$ is half of the major diameter. In the limiting case when $e=0$, the ellipse degenerate into a disk of radius $a$.
A direct computation yields
\begin{equation*}
	\rn'(t)=\frac{e\sin t}{1+e\cos t}
\AND \rn''( t)=e\frac{e+\cos t}{\pare{1+e\cos t}^{2}}.
\end{equation*}
We prove Theorem~\ref{thm:ellipse2focus} by considering the asymptotics \eqref{eq:asympGen_diffeta}. To that end, we need to investigate some properties of the stationary points of the phase function in \eqref{eq:IntFinal}.

Given $\eta\in\Ss^1$, recall that $(\theta,\vT_{\xi})\in\cS_{\vT_\eta}$ if and only if \eqref{eq:criticalStar} is satisfied.
The second equation in \eqref{eq:criticalStar} is equivalent to
\begin{equation}\label{eq:criticEqellip}
	\rn'(\theta)=\frac{e\sin \theta}{1+e\cos \theta}=\frac{\sqrt{q}\sin (\theta-\vT_{\eta_l})}{1+\sqrt{q}\cos (\theta-\vT_{\eta_l})},
	\qquad\mbox{where $\eta_l=(-1)^{l-1}\eta$},
\end{equation}
which can be also written as
\begin{equation}\label{eq:criticEllip}
	e\sin\theta+e\sqrt{q}\sin\vT_{\eta_l}=\sqrt{q}\sin(\theta-\vT_{\eta_l}).
\end{equation}
We first observe that every $\theta\in\RR$ is a solution to \eqref{eq:criticEqellip} and \eqref{eq:criticEllip} in the case when
\begin{equation}\label{eq:except}
	q=e^2\AnD \eta_l=0.
\end{equation}
In all other cases we can solve \eqref{eq:criticEqellip} or \eqref{eq:criticEllip} straightforwardly and obtain two solutions $\vec\theta_{j,l}\in\Ss^1$, $j=1,2$, for each $\eta\in\Ss^1$ and each $l=1,2$. They are given by
\begin{equation}\label{eq:cosjl}
	\cos\theta_{j,l}=\frac{-\sin^2\vT_{\eta_l}+(-1)^{j-1}\Pare{\cos\vT_{\eta_l}-\frac{e}{\sqrt{q}}}A(\eta_l)}{eB(\eta_l)},
\end{equation}
and
\begin{equation}\label{eq:sinjl}
	\sin\theta_{j,l}=\sin \vT_{\eta_l} \frac{\cos\vT_{\eta_l}-\frac{e}{\sqrt{q}}+(-1)^{j-1}A(\eta_l)}{eB(\eta_l)},
\end{equation}
except when $q=e^2$ and $\eta_l=0$ are satisfied simultaneously.
The quantities $A$ and $B$ in \eqref{eq:cosjl} and \eqref{eq:sinjl} are given by
\begin{equation*}
	A(\eta_l):=\sqrt{B(\eta_l)-\sin^2\vT_{\eta_l}}
	\AnD B(\eta_l):=\frac{1}{e^2}+\frac{1}{q}-\frac{2\cos\vT_{\eta_l}}{e\sqrt{q}}.
\end{equation*}
It is observed that $A(\eta_l)=0$ if and only if \eqref{eq:except} is satisfied. 
We can also calculate that
\begin{equation}
	\eta_l\cdot\vec{\theta}_{j,l}=\cos(\theta_{j,l}-\vT_{\eta_l})
	=\frac{-\sin^2\vT_{\eta_l}+(-1)^{j-1}\Pare{\frac{\sqrt{q}}{e}-\cos\vT_{\eta_l}}A(\eta_l)}{\sqrt{q}B(\eta_l)},
\end{equation}
and
\begin{equation}\label{eq:sint-eta}
	\eta_l^\perp\cdot\vec{\theta}_{j,l}=\sin(\theta_{j,l}-\vT_{\eta_l})
	=\sin \vT_{\eta_l} \frac{\frac{\sqrt{q}}{e}-\cos\vT_{\eta_l}+(-1)^{j-1}A(\eta_l)}{\sqrt{q}B(\eta_l)}.
\end{equation}
Notice that
\begin{equation*}
	1+\rn'^2(t)-\rn''(t)=\frac{1}{1+e\cos t}.
\end{equation*}
Recalling \eqref{eq:HessianStat}, at the stationary points $(\theta_{j,l},\vT_{\xi})\in\cS_{\vT_\eta}$ with $\xi=(-1)^l\vec{\theta}_{j,l}$ we have
\begin{equation*}
	-(-1)^{l-1}\frac{D^2\psi_\eta}{\rho(\theta_{j,l})}=\begin{bmatrix}
		\frac{1+\sqrt{q}\cos(\theta_{j,l}-\vT_{\eta_l})}{1+e\cos\theta_{j,l}}&-1
		\\
		-1&1
	\end{bmatrix},
\end{equation*}
and hence
\begin{equation*}
	\begin{split}
		\frac{\det D^2\psi_\eta}{\rho^2(\theta_{j,l})} &=\frac{\sqrt{q}\cos(\theta_{j,l}-\vT_{\eta_l})-e\cos\theta_{j,l}}{1+e\cos\theta_{j,l}}
	=\frac{(-1)^{j-1}e\sqrt{q}}{1+e\cos\theta_{j,l}}A(\eta_l),
	\end{split}
\end{equation*}
which is zero if and only if $\eta_l=0$ and $e^2=q$.

Consider the stationary points $\theta_{j,l}=\theta_{j,l}(\vT_{\eta})$ as functions of $\vT_{\eta}$. We first notice that
\begin{equation}\label{eq:l+1}
	\theta_{j,l}(\vT_\eta+\pi)=\theta_{j,l+1}(\vT_\eta).
\end{equation}
Moreover, regarding the stationary point $\theta_{j,l}$ as a functions of $\vT_\eta$, we can differentiate \eqref{eq:criticEllip} with respect to $\vT_{\eta}$. By doing so we obtain
\begin{equation*}
	\begin{split}
		\theta_{j,l}'&=\sqrt{q}\frac{\cos(\theta_{j,l}-\vT_{\eta_l})+e\cos\vT_{\eta_l}}{\sqrt{q}\cos(\theta_{j,l}-\vT_{\eta_l})-e\cos\theta_{j,l}}
	=\frac{\cos(\theta_{j,l}-\vT_{\eta_l})+e\cos\vT_{\eta_l}}{(-1)^{j-1}eA(\eta_l)}
	\\&=\frac{\Pare{\frac{\sqrt{q}}{e}-\cos\vT_{\eta_l}}\PAre{A(\eta_l)+(-1)^{j-1}\Pare{\cos\vT_{\eta_l}-\frac{e}{\sqrt{q}}}}}{e\sqrt{q}A(\eta_l)B(\eta_l)}.
	\end{split}
\end{equation*}
Notice that
\begin{equation*}
	A^2=\Pare{\frac{1}{e^2}-1}\sin^2\vT_{\eta_l}+\frac{1}{e^2}\Pare{\cos\vT_{\eta_l}-\frac{e}{\sqrt{q}}}^2.
\end{equation*}
Therefore we can conclude that
\begin{equation}\label{eq:sgnTheta'}
	\sgn \theta_{j,l}'(\vT_{\eta})=\sgn \Pare{\frac{\sqrt{q}}{e}-\cos\vT_{\eta_l}}.
\end{equation}
Similarly, recalling \eqref{eq:sinjl} we also observe that
\begin{equation*}
	\sgn \sin \theta_{j,l}=(-1)^{j-l}\sgn \sin\vT_\eta.
\end{equation*}

The following lemma provide the ranges of $\theta_{j,l}$, $j,l=1,2$, as functions of $\vT_\eta$. Throughout this paper, we consider $\arccos$ as the bijection from $[-1,1]$ to $[0,\pi]$.
\begin{lem}\label{lem:thetajlRange}
	If $q>e^2$, then $\theta_{j,l}$ is a bijection onto $[0,2\pi)$, for each $j,l=1,2$.
	If $q<e^2$, then for each $j,l=1,2$, the range of $\theta_{j,l}$ on $[0,2\pi)$ is $$\textstyle[(-1)^{j-1}\Pare{\pi-\arccos(\frac{q}{e}+(-1)^{j-1}c_{q,e})},\pi+(-1)^{j-1}\arccos(\frac{q}{e}+(-1)^{j-1}c_{q,e})],$$
	where $c_{q,e}:=\sqrt{(1-q)(1-q/e^2)}$.
	If $q=e^2$, then for each $j=1,2$,
	$$\theta_{j,1}:(0,2\pi)\to\Pare{(-1)^{j-1}(\pi-\arccos e),\pi+(-1)^{j-1}\arccos e},$$
	$$\theta_{j,2}:(-\pi,\pi)\to\Pare{(-1)^{j-1}(\pi-\arccos e),\pi+(-1)^{j-1}\arccos e},$$
	are strictly increasing smooth bijections.
\end{lem}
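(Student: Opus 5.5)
The plan is to treat each $\theta_{j,l}$ as a continuous function of $\vT_\eta$ on the circle and to read off its range from three facts. The first is the explicit formulas \eqref{eq:cosjl}--\eqref{eq:sinjl}, which are continuous wherever $A(\eta_l)\neq0$, i.e.\ everywhere except in the exceptional case \eqref{eq:except}. The second is the monotonicity rule \eqref{eq:sgnTheta'}. The third is the sign rule $\sgn\sin\theta_{j,l}=(-1)^{j-l}\sgn\sin\vT_\eta$. By \eqref{eq:l+1} it suffices to treat $l=1$, so that $\vT_{\eta_l}=\vT_\eta$. For $l=2$ the statement then follows by the shift $\vT_\eta\mapsto\vT_\eta+\pi$, which explains the domain $(-\pi,\pi)$ for $\theta_{j,2}$ in the case $q=e^2$.

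\emph{Case $q>e^2$.} Here $\sqrt q/e>1$, so \eqref{eq:sgnTheta'} gives $\theta_{j,1}'>0$ everywhere, and $A>0$ everywhere, so $\theta_{j,1}$ is smooth and strictly increasing. First I will evaluate the formulas at $\vT_\eta=0$. There $B=(1/e-1/\sqrt q)^2$ and $A=1/e-1/\sqrt q$. Using $1-e/\sqrt q=e(1/e-1/\sqrt q)$, \eqref{eq:cosjl} gives $\cos\theta_{j,1}=(-1)^{j-1}$. A similar computation at $\vT_\eta=\pi$ gives $\cos\theta_{j,1}=(-1)^{j}$. On $(0,\pi)$ and $(\pi,2\pi)$ the sign of $\sin\theta_{j,1}$ is fixed by the sign rule, so $\theta_{j,1}$ cannot cross $0$ or $\pi$ there. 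Consequently, on each half-period it increases from one of $\{0,\pi\}$ to the other, and the total increment over a full period is exactly $2\pi$. A strictly increasing lift with increment $2\pi$ per period is a bijection onto $[0,2\pi)$.

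\emph{Case $q<e^2$.} Now $\sqrt q/e\in(0,1)$, and $\theta_{j,1}'$ changes sign exactly at the two angles $\pm\vT^*$, where $\cos\vT^*=\sqrt q/e$. Hence $\theta_{j,1}$ is increasing on $(-\vT^*,\vT^*)$ and decreasing on the complementary arc, so its range is the closed arc between its values at $\pm\vT^*$. The key computation is at $\cos\vT_\eta=\sqrt q/e$ and $\sin^2\vT_\eta=1-q/e^2$. There $B=1/q-1/e^2$, and
\[
A^2=B-\sin^2\vT_\eta=(1-q)\Bigl(\frac1q-\frac1{e^2}\Bigr)=\frac{1-q}{q}\Bigl(1-\frac q{e^2}\Bigr),
\]
so $A=c_{q,e}/\sqrt q$. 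Substituting into \eqref{eq:cosjl} should simplify to $\cos\theta_{j,1}=q/e+(-1)^{j-1}c_{q,e}$, and the sign rule decides which of $\pm\arccos(\cdot)$ occurs at $+\vT^*$ and at $-\vT^*$. Continuity then identifies the arc as $[(-1)^{j-1}(\pi-\arccos(\cdot)),\,\pi+(-1)^{j-1}\arccos(\cdot)]$. To confirm that the arc is the one through $\pi$ and not its complement, I will check the value at $\vT_\eta=\pi$, which lies inside the monotone-decreasing stretch.

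\emph{Case $q=e^2$.} At $\vT_\eta=0$ the formulas degenerate, but on $(0,2\pi)$ we have $A>0$ and $\sqrt q/e-\cos\vT_\eta=1-\cos\vT_\eta>0$. Thus $\theta_{j,1}$ is smooth and strictly increasing there, and its image is the open interval between its one-sided limits as $\vT_\eta\to0^+$ and $\vT_\eta\to2\pi^-$. I will compute these limits by expanding $A$, $B$ and $\sin\vT_\eta$ to first order in $\vT_\eta$. Here $B\approx\vT_\eta^2/e^2$ and $A\approx\vT_\eta\sqrt{1/e^2-1}$, and the ratios in \eqref{eq:cosjl}--\eqref{eq:sinjl} have finite limits. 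I expect these limits to be $\cos\theta=\pm e$, giving the endpoints $\pi\pm\arccos e$ and $(-1)^{j-1}(\pi-\arccos e)$.

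The main obstacle is the algebra at the turning points $\pm\vT^*$ and at the degenerate limit when $q=e^2$, together with consistently tracking which branch ($j=1$ or $2$) lands on which side; the sign bookkeeping is where errors are likely.
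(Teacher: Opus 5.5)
Your approach is the paper's: reduce to $l=1$ via \eqref{eq:l+1}, get monotonicity from \eqref{eq:sgnTheta'}, and read off the range from values at $0$, $\pi$ and the turning points. Your use of the sign rule $\sgn\sin\theta_{j,1}=(-1)^{j-1}\sgn\sin\vT_\eta$ to control the lift is more careful than the paper, which only lists values known mod $2\pi$. The $q>e^2$ case is fine as written. In the other cases, however, several of your predicted signs are wrong. These are bookkeeping slips rather than missing ideas, but as stated they contradict the lemma.

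\emph{Monotonicity for $q<e^2$.} Since $\sgn\theta_{j,1}'=\sgn(\sqrt q/e-\cos\vT_\eta)$, the derivative is negative on $(-\vT^*,\vT^*)$. So $\theta_{j,1}$ \emph{decreases} on the arc through $0$ and \emph{increases} on $(\vT^*,2\pi-\vT^*)$, which is how the paper states it. In particular $\vT_\eta=\pi$ lies in the increasing stretch.

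\emph{Turning-point value.} It has the opposite sign from your prediction. Use $\cos\vT^*-e/\sqrt q=(q-e^2)/(e\sqrt q)$, $eB=(e^2-q)/(eq)$, $\sin^2\vT^*=(e^2-q)/e^2$ and $A=c_{q,e}/\sqrt q$. Then \eqref{eq:cosjl} gives $\cos\theta_{j,1}(\pm\vT^*)=-\bigl(q/e+(-1)^{j-1}c_{q,e}\bigr)$. This minus sign, together with the sign rule, is what yields the endpoints $(-1)^{j-1}(\pi-\arccos(\cdot))$ at $\vT^*$ and $\pi+(-1)^{j-1}\arccos(\cdot)$ at $2\pi-\vT^*$. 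With your plus sign you would instead get $\theta_{j,1}(\vT^*)=(-1)^{j-1}\arccos(\cdot)$, which contradicts the arc you then write down.

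\emph{Which arc.} It is the arc through $\pi$ only for $j=1$. When $q<e^2$ one has $\theta_{1,1}(0)=\theta_{1,1}(\pi)=\pi$ but $\theta_{2,1}(0)=\theta_{2,1}(\pi)=0$, so for $j=2$ it is the arc through $0$. Your proposed check at $\vT_\eta=\pi$ would reveal this.

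\emph{Case $q=e^2$.} Both one-sided limits have $\cos\theta_{j,1}\to-e$, not $\pm e$. The sine satisfies $\sin\theta_{j,1}\to(-1)^{j-1}\sqrt{1-e^2}$ as $\vT_\eta\to0^+$ and $\sin\theta_{j,1}\to-(-1)^{j-1}\sqrt{1-e^2}$ as $\vT_\eta\to2\pi^-$. Together with $\theta_{j,1}(\pi)\in\{0,\pi\}$, this pins down the lift and gives exactly the stated open intervals.

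With these corrections your argument proves the lemma.
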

\begin{proof}
	Notice that $A(t)B(t)\neq 0$ for any $t$, provided $e^2\neq q$. Hence in this case $\theta_{j,l}$, $j,l=1,2$, defined via \eqref{eq:cosjl} and \eqref{eq:sinjl} are smooth functions of $\vT_\eta$. Similarly, when $e^2= q$, for each $j,l=1,2$, $\theta_{j,l}$ is smooth away from $\vT_\eta$ such that $\vT_{\eta_l}/(2\pi)\in\ZZ$.

	If $q>e^2$, we have from \eqref{eq:sgnTheta'} that $\theta_{j,l}'>0$. The statement for this case can be then concluded by observing that $\theta_{1,1}(0)=\theta_{2,2}(0)=0=\theta_{1,2}(-\pi)=\theta_{2,1}(-\pi)$, $\theta_{1,1}(\pi)=\theta_{2,2}(\pi)=\pi=\theta_{1,2}(0)=\theta_{2,1}(0)$(, and $\theta_{1,1}(2\pi)=\theta_{2,2}(2\pi)=2\pi=\theta_{1,2}(\pi)=\theta_{2,1}(\pi)$).

	For the case when $q<e^2$, we first obtain from \eqref{eq:sgnTheta'} that $\theta_{j,1}$ increases in $(\arccos\frac{\sqrt{q}}{e},2\pi-\arccos\frac{\sqrt{q}}{e})$ and decreases in $(2\pi-\arccos\frac{\sqrt{q}}{e},2\pi+\arccos\frac{\sqrt{q}}{e})$. 
	The corresponding statement can be then concluded by recalling \eqref{eq:l+1} and calculating that
	$$\textstyle\theta_{j,1}(\arccos\frac{\sqrt{q}}{e})=(-1)^{j-1}\Pare{\pi-\arccos(\frac{q}{e}+(-1)^{j-1}\sqrt{(1-q)(1-\frac{q}{e^2})})},$$
	and
	$$\textstyle\theta_{j,1}(2\pi-\arccos\frac{\sqrt{q}}{e})=\pi+(-1)^{j-1}\arccos(\frac{q}{e}+(-1)^{j-1}\sqrt{(1-q)(1-\frac{q}{e^2})}).$$

	Finally, assume that $q=e^2$. Then $\theta_{j,l}$ has singularity when $\vT_{\eta_l}=0$, but $\theta_{j,1}$ is smooth and monotonically increasing on $(0,2\pi)$, and so is $\theta_{j,2}$ on $(-\pi,\pi)$.
	In addition, when $\vT_{\eta_l}\neq0$ we have
	\begin{equation*}
		B_l:=B(\eta_l)=\frac{2}{e^2}(1-\cos\vT_{\eta_l})=\frac{4}{e^2}\sin^2\frac{\vT_{\eta_l}}{2},\quad
		A_l:=A(\eta_l)=\frac{2}{e}|\sin\frac{\vT_{\eta_l}}{2}|\sqrt{1-e^2\cos^2\frac{\vT_{\eta_l}}{2}},
	\end{equation*}
	and hence
	\begin{equation*}
		\begin{split}
			\cos\theta_{j,l}&=\frac{-4\sin^2\frac{\vT_{\eta_l}}{2}\cos^2\frac{\vT_{\eta_l}}{2}-(-1)^{j-1}2\sin^2\frac{\vT_{\eta_l}}{2}A_l}{eB_l}
		\\&=-e\cos^2\frac{\vT_{\eta_l}}{2}-(-1)^{j-1}|\sin\frac{\vT_{\eta_l}}{2}|\sqrt{1-e^2\cos^2\frac{\vT_{\eta_l}}{2}},
		\\
			\sin\theta_{j,l}&=\sin\vT_{\eta_l} \frac{-2\sin^2\frac{\vT_{\eta_l}}{2}+(-1)^{j-1}A_l}{eB_l}
		\\&=-\frac{e}{2}\sin\vT_{\eta_l}+(-1)^{j-1}(\sgn\sin\frac{\vT_{\eta_l}}{2})\cos\frac{\vT_{\eta_l}}{2}\sqrt{1-e^2\cos^2\frac{\vT_{\eta_l}}{2}}.
		\end{split}
	\end{equation*}
	Therefore, we obtain that
	\begin{equation*}
		\lim_{\vT_\eta\to 0_+}\theta_{j,1}(\vT_\eta) =(-1)^{j-1}(\pi-\arccos e) =\lim_{\vT_\eta\to -\pi_+}\theta_{j,2}(\vT_\eta),
	\end{equation*}
	and
	\begin{equation*}
		\lim_{\vT_\eta\to 2\pi_-}\theta_{j,1}(\vT_\eta)=\pi+(-1)^{j-1}\arccos e=\lim_{\vT_\eta\to \pi_-}\theta_{j,2}(\vT_\eta).
	\end{equation*}
	The proof is complete.
\end{proof}
Define
\begin{equation*}\label{eq:fjlStar}
	f_{j,l}(\vT_\eta)=\eta^\perp \cdot y(\theta_{j,l}) =\rho(\theta_{j,l}) \sin(\theta_{j,l}-\vT_{\eta}),\qquad j,l=1,2.
\end{equation*}
The following result shows that $\#\{f_{j,l}(t): j,l=1,2\}=4$ for almost all $\vT_\eta$, where $\#$ denotes the cardinality of a set.
\begin{lem}\label{lem:fjlEllip}
	For every $t$ with $\sin t\neq 0$, the set $\{f_{j,l}(t): j,l=1,2\}$ has exactly four distinct elements, except when $\cos t= 0$ we have $f_{1,2}(t)=f_{1,1}(t)\neq f_{2,1}(t)=f_{2,2}(t)$, and when $e^2q>1$ and $\cos t= (-1)^{\iota-1}/(e\sqrt{q})$ we have $\#\{f_{j,l}(t): j,l=1,2\}=3$ with $f_{1,\iota}(t)=f_{2,\iota}(t)$.
\end{lem}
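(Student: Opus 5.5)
The plan is to compute every $f_{j,l}(t)$ in closed form from \eqref{eq:cosjl}--\eqref{eq:sint-eta}, reduce each to the form ``common factor $\times$ (one function of $\cos\vT_{\eta_l}$ plus or minus another)'', and then read off when two of the four values can coincide.

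\emph{Step 1 (simplify $\rho(\theta_{j,l})$).} Fix $l$ and write $T=\vT_{\eta_l}$, $c=\cos T$, $s=\sin T$, $u=c-e/\sqrt{q}$ and $v=\sqrt{q}/e-c$. From \eqref{eq:cosjl} and $A^2=B-s^2$ I get
\begin{equation*}
1+e\cos\theta_{j,l}=\frac{B-s^2+(-1)^{j-1}uA}{B}=\frac{A\bigl(A+(-1)^{j-1}u\bigr)}{B}.
\end{equation*}
This gives $\rho(\theta_{j,l})=a(1-e^2)B/\bigl(A(A+(-1)^{j-1}u)\bigr)$.

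\emph{Step 2 (closed form for $f_{j,l}$).} Since $\eta=(-1)^{l-1}\eta_l$, we have $\sin(\theta_{j,l}-\vT_\eta)=(-1)^{l-1}\sin(\theta_{j,l}-T)$, and the latter is given by \eqref{eq:sint-eta}. Multiplying by $\rho(\theta_{j,l})$, I rationalize using the identity
\begin{equation*}
A^2-u^2=\frac{(1-e^2)s^2}{e^2},
\end{equation*}
which follows from the displayed formula for $A^2$. This cancels the factor $1-e^2$ and a factor $s$. Next I use $\cos T=(-1)^{l-1}\cos t$ and $\sin T=(-1)^{l-1}\sin t$, so that the sign $(-1)^{l-1}$ cancels against $1/\sin T$. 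The expected outcome is
\begin{equation*}
f_{j,l}(t)=\frac{ae^2}{\sqrt{q}\,\sin t}\Bigl(g(c_l)+(-1)^{j-1}h(c_l)\Bigr),\qquad c_l:=(-1)^{l-1}\cos t,
\end{equation*}
with
\begin{equation*}
g(c)=\frac{\sqrt{q}}{e}+\frac{e}{\sqrt{q}}-2c,\qquad h(c)=\frac{A^2-uv}{A}.
\end{equation*}
Here $A$, $u$, $v$ are all evaluated at $c$; note that $A^2=B-s^2$ depends on $T$ only through $c$. For $\sin t\ne0$, $A\neq0$ holds automatically, because $A=0$ forces $\vT_{\eta_l}\in2\pi\ZZ$.

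\emph{Step 3 (comparing values).} The four values coincide in the following ways.
\begin{enumerate}[(a)]
\item Same $l$, different $j$: $f_{1,l}(t)=f_{2,l}(t)$ if and only if $h(c_l)=0$. Expanding, $A^2-uv$ is a quadratic polynomial in $c$. I expect it to factor as a multiple of $e^2qc^2-1$, so that its roots are $c=\pm1/(e\sqrt q)$. These are admissible, i.e. satisfy $|c|<1$, exactly when $e^2q>1$. This produces the exceptional case $\cos t=(-1)^{\iota-1}/(e\sqrt q)$ together with $f_{1,\iota}=f_{2,\iota}$.
\item Different $l$: I must decide when $g(c)\pm h(c)=g(-c)\pm' h(-c)$ for the various sign choices. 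Since $g(c)-g(-c)=-4c$, the case $c=0$ (that is, $\cos t=0$) forces $f_{j,1}=f_{j,2}$, because $g$ and $h$ then agree at $\pm c$. For $c\neq0$, I would clear the square roots, which are $A(c)$ and $A(-c)$, and show that no such equality can hold. This amounts to excluding the identities $-4c=\pm h(-c)\pm h(c)$ by squaring and comparing polynomials in $c$.
\end{enumerate}

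The main obstacle is the case $l$ different with $c\neq0$: the mixed square roots $A(c)$ and $A(-c)$ make the elimination messy. If direct squaring becomes unwieldy, I would instead exploit monotonicity, or a sign comparison of $g\pm h$ in $c$, to separate the branches.
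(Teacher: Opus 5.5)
\paragraph{Verdict.} Your route (a closed form for $f_{j,l}$, then pairwise comparison) is the paper's route, but there is a genuine gap: Step 2 rests on a false identity, the factorization you predict in (a) is wrong, and the decisive cross-$l$ case is never carried out.

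\paragraph{The errors in Steps 2 and 3(a).} The displayed formula for $A^2$ gives $e^2A^2-u^2=(1-e^2)s^2$, equivalently $A^2-u^2=(1-e^2)B$. It does not give $A^2-u^2=(1-e^2)s^2/e^2$. For instance, at $c=0$, $e=1/2$, $q=0.64$ you get $4.171875$ versus $3$. The correct closed form is therefore
\[
f_{j,l}(t)=\frac{a\sin t}{\sqrt q\,B(c_l)}\Bigl(g(c_l)+(-1)^{j-1}h(c_l)\Bigr).
\]
Its prefactor depends on $l$, because $B(c)-B(-c)=-4c/(e\sqrt q)$. So for $c\neq 0$ the equations you set up in (b) compare the wrong quantities. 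In (a), $A^2-uv$ is not a multiple of $e^2qc^2-1$. In fact $A^2-uv=B(c)\,(1-e\sqrt q\,c)$, and its only root in $(-1,1)$ is $c=+1/(e\sqrt q)$. The sign matters. If the roots were $\pm 1/(e\sqrt q)$, then at $\cos t=1/(e\sqrt q)$ both $c_1$ and $c_2=-c_1$ would be roots. That would force $f_{1,1}=f_{2,1}$ and $f_{1,2}=f_{2,2}$, hence at most two values, contradicting the count of three you are trying to prove.

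\paragraph{The missing idea for case (b).} You only announce this case. Two facts collapse everything: $g(c)=e\sqrt q\,B(c)$ and $A^2-uv=B(c)(1-e\sqrt q\,c)$. Together they give the paper's formula
\[
f_{j,l}(t)=ea\sin t\,\bigl(1+(-1)^{j-1}w_l\bigr),\qquad w_l=\frac{\kappa-c_l}{A(c_l)},\quad \kappa=\frac{1}{e\sqrt q}.
\]
The ``$g$-part'' is now identical for both $l$. So your obstacle $g(c)-g(-c)=-4c$ is an artifact of the wrong prefactor.

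\paragraph{Finishing the argument.} From the formula above,
\[
f_{1,1}-f_{2,2}=f_{1,2}-f_{2,1}=ea\sin t\,(w_1+w_2),\qquad f_{1,2}-f_{1,1}=f_{2,1}-f_{2,2}=ea\sin t\,(w_2-w_1).
\]
Hence every cross-$l$ coincidence forces $w_1^2=w_2^2$. Write $A(\pm c)^2=P\mp 2\kappa c$ with $P=e^{-2}+q^{-1}-1+c^2$. Then
\[
A(c)^2(c+\kappa)^2-A(-c)^2(c-\kappa)^2=4\kappa c\,(e^{-2}-1)(1-q^{-1}).
\]
This is nonzero for $c\neq0$, since $e\in(0,1)$ and $q\neq 1$. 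Combined with $w_l=0\iff c_l=\kappa$, this gives all three claims:
\begin{itemize}
\item four distinct values in general;
\item the pairing $f_{1,1}=f_{1,2}\neq f_{2,1}=f_{2,2}$ at $\cos t=0$;
\item exactly three values when $c_\iota=\kappa<1$.
\end{itemize}
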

\begin{rem}
	If $\sin t=0$, we immediately obtain from \eqref{eq:sint-eta} that $f_{j,l}(t)=0$, $j,l=1,2$.
\end{rem}
\begin{proof}
	Direct calculation yields
	\begin{equation*}
		f_{j,l}(\vT_\eta)=\eta^\perp\cdot \vec{\theta}_{j,l}\rho(\theta_{j,l})
		=\frac{a(1-e^2)\sin\vT_\eta}{\sqrt{q}} \frac{\frac{\sqrt{q}}{e}-\cos\vT_{\eta_l}+(-1)^{j-1}A_l}{(1+e\cos\theta_{j,l})B_l},
	\end{equation*}
	where $A_l:=A(\eta_l)$ and $B_l:=B(\eta_l)$.
	Notice that
	\begin{equation*}
		(1+e\cos\theta_{j,l})B_l =\Pare{A_l+(-1)^{j-1}(\cos\vT_{\eta_l}-\frac{e}{\sqrt{q}})}A_l,
	\end{equation*}
	and that
	\begin{equation*}
		\begin{split}
			&\frac{A_l+(-1)^{j-1}(\frac{\sqrt{q}}{e}-\cos\vT_{\eta_l})}{A_l+(-1)^{j-1}(\cos\vT_{\eta_l}-\frac{e}{\sqrt{q}})}
		=\frac{\Pare{A_l+(-1)^{j-1}(\frac{\sqrt{q}}{e}-\cos\vT_{\eta_l})}\Pare{A_l-(-1)^{j-1}(\cos\vT_{\eta_l}-\frac{e}{\sqrt{q}})}}{A_l^2-(\cos\vT_{\eta_l}-\frac{e}{\sqrt{q}})^2}
		\\&\qquad=\frac{B_l-\sin^2\vT_{\eta_l}+(-1)^{j-1}e\sqrt{q}B_lA_l-(\frac{\sqrt{q}}{e}-\cos\vT_{\eta_l})(\cos\vT_{\eta_l}-\frac{e}{\sqrt{q}})}{B_l-\sin^2\vT_{\eta_l}-(\cos\vT_{\eta_l}-\frac{e}{\sqrt{q}})^2}
		\\&\qquad=\frac{(-1)^{j-1}e\sqrt{q}}{1-e^2}\Pare{A_l-(-1)^{j-1}(\cos\vT_{\eta_l}-\frac{1}{e\sqrt{q}})}.
		\end{split}
	\end{equation*}
	Assume that $\sin\vT_\eta\neq 0$. Then
	\begin{equation*}
		\frac{f_{j,l}(\vT_\eta)}{ea\sin\vT_\eta}-1
		= (-1)^{j-1}\frac{-(-1)^{l-1}\cos\vT_{\eta}+\frac{1}{e\sqrt{q}}}{A_l}.
	\end{equation*}
	It follows that $f_{\eta}^{1,l}=f_{\eta}^{2,l}$ if and only if $(-1)^{l-1}\cos\vT_\eta=1/(e\sqrt{q})<1$, for each $l=1,2$.
	We also observe that $f_{1,2}(\vT_\eta)-f_{2,1}(\vT_\eta)=f_{1,1}(\vT_\eta)-f_{2,2}(\vT_\eta)$. In addition, when $\cos\vT_\eta=0$ we have $A_1=A_2$ and $f_{1,1}(\vT_\eta)=f_{1,2}(\vT_\eta)\neq f_{2,1}(\vT_\eta)=f_{2,2}(\vT_\eta)$.
	Finally let us consider the case when $\sin\vT_\eta \cos\vT_\eta\neq 0$. In this case
	\begin{equation*}
		\begin{split}
			\frac{f_{1,1}(\vT_\eta)-f_{2,2}(\vT_\eta)}{ea\sin\vT_\eta}
		&=\frac{\cos\vT_{\eta}+\frac{1}{e\sqrt{q}}}{A_2}-\frac{\cos\vT_{\eta}-\frac{1}{e\sqrt{q}}}{A_1}
		=\frac{A_1^2(\cos\vT_{\eta}+\frac{1}{e\sqrt{q}})^2-A_2^2(\cos\vT_{\eta}-\frac{1}{e\sqrt{q}})^2}{A_1A_2\Pare{A_1(\cos\vT_{\eta}+\frac{1}{e\sqrt{q}})+A_2(\cos\vT_{\eta}-\frac{1}{e\sqrt{q}})}}
		\\&=\frac{4(\frac{1}{e^2}-1)(1-\frac{1}{q})\cos\vT_\eta}{e\sqrt{q}A_1A_2\Pare{A_1(\cos\vT_{\eta}+\frac{1}{e\sqrt{q}})+A_2(\cos\vT_{\eta}-\frac{1}{e\sqrt{q}})}}\neq 0,
		\end{split}
	\end{equation*}
	and similarly
	\begin{equation*}
		\begin{split}
			\frac{f_{1,2}(\vT_\eta)-f_{1,1}(\vT_\eta)}{ea\sin\vT_\eta}
			=\frac{4(\frac{1}{e^2}-1)(1-\frac{1}{q})\cos\vT_\eta}{e\sqrt{q}A_1A_2\Pare{A_1(\cos\vT_{\eta}+\frac{1}{e\sqrt{q}})-A_2(\cos\vT_{\eta}-\frac{1}{e\sqrt{q}})}}\neq 0.
		\end{split}
	\end{equation*}
	The proof is complete.
\end{proof}
Notice that $\#\{f_{j,l}(\vT_\eta): j,l=1,2\}=4$ implies $\#\{\theta^{j,l}: j,l=1,2\}=4$. Hence we have from \eqref{eq:asympGen_diffeta} that, for all $\eta\in\Ss^1$ with $\sin\vT_\eta \cos\vT_\eta\neq 0$ and $qe^2\cos^2\vT_\eta\neq 1$,
\begin{equation}\label{eq:IntNasymp}
	\text{\small$\Int_k^{(N)}(\vT_\eta)=
		\frac{2\pi}{k}\pare{ik\sqrt{q}}^N \sum_{j,l=1}^{2}
		\frac{\Psi_\eta e^{\frac{i\pi}4\sgn D^2\psi_\eta+ik\psi_\eta}}{\abs{\det D^2\psi_\eta}^{1/2}}|_{(\theta_{j,l},\theta_{j,l}+\delta_{l,2}\pi)}
		\phi((-1)^{l-1}\vec\theta_{j,l})f_{j,l}^N(\vT_\eta)
		+O(k^{N-2})$},
\end{equation}
as $k\to \infty$. We are ready to prove the following result.
\begin{prop}\label{prop:ellip}
	Let $\phi\in C^1(\Ss^1)$. Suppose that there exist a sequence of wavenumbers $k$'s tending to infinity such that $\Int_k(\vT_\eta)=0$ for all $\vT_\eta$, where $\Int_k$ is defined in \eqref{eq:IntFinal} with the associated radius function $\rho$ given by \eqref{eq:rhoEllip}. If, either $\phi$ is real-analytic, or $q$ and $e$ satisfy $1/q\le 1+1/e^2$, then $\phi$ must be identically zero.
\end{prop}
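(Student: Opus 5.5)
Fix $\vT_\eta$ with $\sin\vT_\eta\cos\vT_\eta\neq0$ and $qe^2\cos^2\vT_\eta\neq1$. By Lemma~\ref{lem:fjlEllip} the four numbers $f_{j,l}(\vT_\eta)$ are then distinct, so \eqref{eq:IntNasymp} holds. Since $\Int_k\equiv 0$, all derivatives $\Int_k^{(N)}(\vT_\eta)$ vanish along the sequence $k\to\infty$. Dividing \eqref{eq:IntNasymp} by $\frac{2\pi}{k}(ik\sqrt q)^N$ gives, for $N=0,1,2,3$,
\begin{equation*}
\sum_{j,l=1}^2 c_{j,l}(k)\, f_{j,l}^N(\vT_\eta)=O(k^{-1}),\qquad c_{j,l}(k):=\frac{\Psi_\eta e^{\frac{i\pi}4\sgn D^2\psi_\eta+ik\psi_\eta}}{\abs{\det D^2\psi_\eta}^{1/2}}\phi((-1)^{l-1}\vec\theta_{j,l}).
\end{equation*}
The Vandermonde matrix $(f_{j,l}^N)$ is invertible, and its inverse does not depend on $k$. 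Hence $c_{j,l}(k)=O(k^{-1})$. The factor $e^{ik\psi_\eta}$ has modulus one, $\Psi_\eta\neq0$ by Lemma~\ref{lem:Psi=0}, and $\det D^2\psi_\eta\neq0$ away from the exceptional set \eqref{eq:except}. Letting $k\to\infty$ therefore yields $\phi((-1)^{l-1}\vec\theta_{j,l}(\vT_\eta))=0$ for all $j,l$ and for all $\vT_\eta$ outside a finite set. By continuity of $\phi$ and of $\theta_{j,l}$, the vanishing extends to the closure of the ranges.

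Next we use Lemma~\ref{lem:thetajlRange} to read off where $\phi$ must vanish. If $q\ge e^2$, each $\theta_{j,l}$ sweeps out all of $[0,2\pi)$ (up to endpoint limits when $q=e^2$), so $\phi\equiv0$ directly. If $q<e^2$, each $\theta_{j,l}$ covers only a closed arc $J_j$, described explicitly through $\arccos(\frac qe\pm c_{q,e})$. The point $(-1)^{l-1}\vec\theta_{j,l}$ then covers $J_j$ for $l=1$ and $J_j+\pi$ for $l=2$. So $\phi$ vanishes on $J_1\cup J_2\cup(J_1+\pi)\cup(J_2+\pi)$. In the real-analytic case this finishes the proof: these are nondegenerate arcs, and an analytic function vanishing on an arc vanishes identically. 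In the $C^1$ case we must show that the four arcs cover $\Ss^1$. This reduces to an inequality between the two arccos values, and it is exactly where the hypothesis $1/q\le1+1/e^2$ should enter.

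The hard part will be this covering computation. $J_1$ is roughly an arc around $\pi$, namely $[\pi-\arccos\alpha_+,\pi+\arccos\alpha_+]$ with $\alpha_\pm=\frac qe\pm c_{q,e}$. $J_2$ is $[-(\pi-\arccos\alpha_-),\pi-\arccos\alpha_-]$, roughly an arc around $0$. Adding the antipodal copies, the gaps to check are near $\pm\pi/2$. Covering requires something like $\arccos\alpha_+ + \arccos\alpha_-\ge \pi/2$ or a comparable condition. I would first try to reduce it to $\cos(\arccos\alpha_++\arccos\alpha_-)\le0$, that is, $\alpha_+\alpha_- \le \sqrt{(1-\alpha_+^2)(1-\alpha_-^2)}$. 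Using $\alpha_+\alpha_-=q^2/e^2-(1-q)(1-q/e^2)$, this should simplify to $q(1+e^{-2})\ge 1$, i.e.\ $1/q\le1+1/e^2$. I would verify that algebra carefully, including the sign cases, since this step carries the whole $C^1$ case.
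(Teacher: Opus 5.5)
Your Vandermonde step is the paper's argument. Bounding $c_{j,l}(k)=O(k^{-1})$ directly is a fine substitute for the paper's subsequential limits $\omega_{j,l}$, and your treatment of the real-analytic case and of $q>e^2$ is correct. The gap is the covering step for $C^1$ densities when $q\le e^2$. You leave it open, and the criterion you propose for it is wrong.

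With $\alpha_\pm=\frac qe\pm c_{q,e}$ one has
$$\alpha_+\alpha_-=\frac{q^2}{e^2}-(1-q)\Big(1-\frac{q}{e^2}\Big)=q(1+e^{-2})-1.$$
\begin{itemize}
\item When $1/q>1+1/e^2$, this product is negative. Your inequality $\alpha_+\alpha_-\le\sqrt{(1-\alpha_+^2)(1-\alpha_-^2)}$ then holds trivially, and this is exactly the regime in which the arcs fail to cover.
\item Conversely, take $q=0.7$, $e=0.9$. The hypothesis holds and the arcs do cover $\Ss^1$, yet $\arccos\alpha_++\arccos\alpha_-\approx 1.16<\pi/2$.
\end{itemize}
So your condition does not simplify to $q(1+e^{-2})\ge1$; it is neither necessary nor sufficient.

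The missing observation is that the $j=1$ arcs are redundant.
\begin{itemize}
\item $J_1$ and $J_2+\pi$ are both centered at $\pi$, with half-widths $\arccos\alpha_+$ and $\pi-\arccos\alpha_-$. Since $\alpha_++\alpha_-=2q/e>0$, we have $\arccos\alpha_+\le\pi-\arccos\alpha_-$, so $J_1\subset J_2+\pi$. Likewise $J_1+\pi\subset J_2$.
\item Hence the union of your four arcs is just $J_2\cup(J_2+\pi)$. This equals $\Ss^1$ if and only if $\pi-\arccos\alpha_-\ge\pi/2$, i.e. $\alpha_-\ge0$.
\item Because $\alpha_+>0$, the condition $\alpha_-\ge0$ is equivalent to $\alpha_+\alpha_-\ge0$, i.e. to $1/q\le1+1/e^2$.
\end{itemize}
This is precisely the paper's argument: $\frac qe-c_{q,e}\ge0$ gives $[-\pi/2,\pi/2]\subset\theta_{2,1}[0,2\pi)$ and $[\pi/2,3\pi/2]\subset\pi+\theta_{2,2}[0,2\pi)$.

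You also misread Lemma~\ref{lem:thetajlRange} at $q=e^2$. There the maps $\theta_{j,l}$ are not onto $[0,2\pi)$, even up to endpoints. In particular, $\theta_{1,l}$ only covers the open arc of half-width $\arccos e<\pi/2$ about $\pi$. That case still works, but only because $\theta_{2,l}$ covers the arc of half-width $\pi-\arccos e>\pi/2$ about $0$. This is the same $\alpha_-\ge0$ mechanism with $c_{q,e}=0$. Only for $q>e^2$ is each $\theta_{j,l}$ surjective.
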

\begin{proof}
	Since $\Int_k$ is smooth in $\vT_\eta$, we have that $\Int_k^{(N)}=0$ for the sequence of wavenumbers $k$'s. \Blu{Taking $N=0,1,2,3$ in \eqref{eq:IntNasymp} yields a homogeneous linear system for the four leading-order coefficients associated with the stationary points. We} can then consider an algebraic equation with a $4\times 4$ Vandermonde coefficient matrix whose entries are generated by $f_{j,l}^N(\vT_\eta)$, $j,l=1,2$, $N=0,\ldots,3$. More precisely, we must have
	\begin{equation*}
		\begin{bmatrix}
			1&1&1&1\\a_1&a_2&a_3&a_4\\a_1^2&a_2^2&a_3^2&a_4^2\\a_1^3&a_2^3&a_3^3&a_4^3
		\end{bmatrix}
		\begin{bmatrix}
			z_1\\z_2\\z_3\\z_4
		\end{bmatrix}
		= 0,
	\end{equation*}
	where $\{a_j:j=1,\ldots,4\}=\{f_{j,l}(\vT_\eta):j,l=1,2\}$,  $$\{z_j:j=1,\ldots,4\}=\{\phi(\xi)\Psi_\eta(\theta,\vT_\xi)/\abs{\det D^2\psi_\eta(\theta,\vT_\xi)}^{1/2}\omega_{j,l}	:(\theta,\vT_\xi)=(\theta_{j,l},\theta_{j,l}+\delta_{l,2}\pi),j,l=1,2\},$$
	and $\omega_{j,l}\in\CC$ with $|\omega_{j,l}|=1$ is a limit of $e^{\frac{i\pi}4\sgn D^2\psi_\eta+ik\psi_\eta}$ as $k\to \infty$ up to a subsequence.
	Recall from Lemma~\ref{lem:Psi=0} that (see also, \cite[Lemma 3.4]{VogXia25}) $\Psi_\eta(\theta_{j,l},\theta_{j,l}+\delta_{l,2}\pi)\neq 0$. By Lemma~\ref{lem:fjlEllip} we can then conclude that the Vandermonde matrix is nonsingular and hence
\begin{equation*}
	\phi(\xi)=\phi((-1)^{l-1}\vec\theta_{j,l}(\vT_\eta))=0,\qquad j,l=1,2,
\end{equation*}
for all $\eta\in\Ss^1$ with $\sin\vT_\eta \cos\vT_\eta\neq 0$ and $qe^2\cos^2\vT_\eta\neq 1$, and hence by continuity for all $\eta\in\Ss^1$.

If $\phi$ is real-analytic on $\Ss^1$, we readily have $\phi=0$ since it vanishes on a nonempty open subset of $\Ss^1$. For the case when $1/q\le 1+1/e^2$, we claim that $\cup_{j,l=1}^2(-1)^{l-1}\vec{\theta}_{j,l}[0,2\pi)=\Ss^1$, which then implies $\phi=0$ on $\Ss^1$. In fact, if $q>e^2$, we obtain from Lemma~\ref{lem:thetajlRange} that $\vec{\theta}_{1,1}[0,2\pi)=\Ss^1$. Suppose that $e^2>q\ge 1-q/e^2$. Then $q/e-c_{q,e}\ge 0$ and hence $0<\arccos(q/e-c_{q,e})\le \pi/2$, where $c_{q,e}=\sqrt{(1-q)(1-q/e^2)}$. Therefore by Lemma~\ref{lem:thetajlRange} we have $[-\pi/2,\pi/2]\subset\theta_{2,1}[0,2\pi)$ and $[\pi/2,3\pi/2]\subset \pi+\theta_{2,2}[0,2\pi)$. 
Similarly, when $q=e^2$, the range of $\theta_{2,1}$ is $(\arccos e-\pi,\pi-\arccos e)\supset [-\pi/2,\pi/2]$, and that of $\theta_{2,2}+\pi$ is $(\arccos e,2\pi-\arccos e)\supset [\pi/2,3\pi/2]$.

The proof is complete.
\end{proof}
Theorem~\ref{thm:ellipse2focus} is a direct consequence of Proposition~\ref{prop:ellip} and Lemmas~\ref{lem:starting}~and~\ref{lem:kinfty}.

\section{Star-Shaped Domain}
In this section, we consider the case when $\Omega$ is a general $C^2$ star-shaped domain as in Definition~\ref{defn:star} and prove Theorems~\ref{thm:StarAnly}~and~\ref{thm:main1Star}. We will adopt the main ideas for the proof of Theorem~\ref{thm:ellipse2focus}. However, due to the implicit parameterization of $\partial\Omega$, more dedicated analysis is needed. 
We assume $q\in(0,1)$ throughout this section.
\subsection{Stationary Points}
We first establish the existence and properties of the stationary points $(\theta,\vT_{\xi})\in\cS_{\vT_\eta}$ associated with the phase function $\psi_{\eta}$, that is, the solutions to \eqref{eq:criticalStar}. The second equation in  \eqref{eq:criticalStar} is equivalent to
\begin{equation}\label{eq:criticalStar2}
	\rn'(\theta)=h(\theta-\vT_{\eta_l}), \qquad \text{where}\quad
	h(t):=\frac{\sqrt{q}\sin t}{1+\sqrt{q}\cos t}.
\end{equation}
We calculate that
\begin{equation*}\label{eq:hdiffstar}
	h'(t)=\sqrt{q}\frac{\sqrt{q}+\cos t}{(1+\sqrt{q}\cos t)^2}
	\AND h''(t)=\sqrt{q}\sin t\frac{2q-1+\sqrt{q}\cos t}{(1+\sqrt{q}\cos t)^3}.
\end{equation*}
\begin{lem}\label{lem:4criticStarSml}
	Denote $\theta_q=\arccos\sqrt{q}\in(0,\pi/2)$ and $\tilde{\theta}_q=\pi-\theta_q\in(\pi/2,\pi)$. Assume that there exist $\theta_1\in(0,\tilde{\theta}_q)$ and $\theta_2\in(\tilde{\theta}_q,\pi)$ such that
	\begin{equation}\label{eq:StarSmlCond1}
		|\rn'(t)|<\min_{j=1,2}h(\theta_j)\AND
		h'(\theta_2)<\rn''(t)<\min\{h'(0),h'(\theta_1)\},\qquad\mbox{for all $t$}.
	\end{equation}
	Then for each $l=1,2$ and each $\eta\in\Ss^1$, there are exactly two solutions $\theta=\Cp_{1,l}\vT_\eta$ and $\theta=\Cp_{2,l}\vT_\eta$ to \eqref{eq:criticalStar2}, which satisfy $\Cp_{1,l}\vT_\eta-\vT_{\eta_l}\in(-\theta_1,\theta_1)\subset(-\tilde{\theta}_q,\tilde{\theta}_q)$ and $\Cp_{2,l}\vT_\eta-\vT_{\eta_l}\in(\theta_2,2\pi-\theta_2)\subset(\tilde{\theta}_q,2\pi-\tilde{\theta}_q)$.
\end{lem}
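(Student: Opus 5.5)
Fix $l$ and substitute $s=\theta-\vT_{\eta_l}$, so that \eqref{eq:criticalStar2} reads $g(s):=\rn'(s+\vT_{\eta_l})-h(s)=0$ for $s\in\RR/2\pi\ZZ$. The plan is to cut the circle into four arcs, $I_1=(-\theta_1,\theta_1)$, $J_+=[\theta_1,\theta_2]$, $I_2=(\theta_2,2\pi-\theta_2)$ and $J_-=[-\theta_2,-\theta_1]$. On $I_1$ and $I_2$ we show that $g$ is strictly monotone and changes sign, which gives exactly one zero on each. On $J_\pm$ we show that $g$ has a fixed strict sign, so no zero lies there. Since $\theta_1<\tilde\theta_q<\theta_2$, the inclusions $I_1\subset(-\tilde\theta_q,\tilde\theta_q)$ and $I_2\subset(\tilde\theta_q,2\pi-\tilde\theta_q)$ are automatic.

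\emph{Shape of $h$.} From the formula for $h'$, $h$ is odd and $2\pi$-periodic, it increases on $(-\tilde\theta_q,\tilde\theta_q)$ and decreases on $(\tilde\theta_q,2\pi-\tilde\theta_q)$. Its maximum is $h(\tilde\theta_q)=\sqrt{q}/\sqrt{1-q}$, and it vanishes at $0$ and $\pi$. Consequently $h\ge\min\{h(\theta_1),h(\theta_2)\}$ on $J_+$, and $h$ is bounded above by $-$ of that value on $J_-$. The hypothesis $|\rn'|<\min_j h(\theta_j)$ then makes $g<0$ on $J_+$ and $g>0$ on $J_-$. At the endpoints of these closed arcs the inequality is strict because the hypothesis is strict.

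\emph{Monotonicity on $I_1$ and $I_2$.} We have $g'(s)=\rn''(s+\vT_{\eta_l})-h'(s)$.

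On $I_2$, the function $h'$ is negative. Using $h''$, it is even about $\pi$ and increases away from its minimum $h'(\pi)=-\sqrt{q}/(1-\sqrt{q})$, because $2q-1+\sqrt{q}\cos s$ has a fixed sign near $\pi$. The step to verify carefully is that $h'\le h'(\theta_2)$ on all of $[\theta_2,2\pi-\theta_2]$. I expect this to follow from the monotonicity of $h'$ on $(\tilde\theta_q,\pi)$. If $2q-1+\sqrt{q}\cos s$ changes sign inside that interval, I will instead compare $h'$ directly via the explicit formula. Granting this, $\rn''>h'(\theta_2)\ge h'$ gives $g'>0$ on $I_2$.

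On $I_1$, the derivative $h'$ is positive. Here I need $h'\ge\min\{h'(0),h'(\theta_1)\}$ on $[-\theta_1,\theta_1]$. The idea is that $h'$ on $[0,\theta_1]$ is a unimodal-type function, possibly increasing first when $2q-1+\sqrt{q}>0$ and then decreasing toward $0$ at $\tilde\theta_q$. By the sign analysis of $h''$, its minimum over $[0,\theta_1]$ is attained at an endpoint, and evenness handles $[-\theta_1,0]$. Then $\rn''<h'$ gives $g'<0$ on $I_1$.

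\emph{Sign change.} On $I_1$, $g$ goes from $g(-\theta_1)>0$ (the left end lies in $J_-$) to $g(\theta_1)<0$ (the right end lies in $J_+$). Strict monotonicity then gives exactly one zero $\Cp_{1,l}\vT_\eta$. Likewise on $I_2$, $g$ rises from $g(\theta_2)<0$ to $g(2\pi-\theta_2)=g(-\theta_2)>0$, giving exactly one zero $\Cp_{2,l}\vT_\eta$.

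The main obstacle is the endpoint-minimum claims for $h'$ on $[-\theta_1,\theta_1]$ and on $[\theta_2,2\pi-\theta_2]$. These require tracking the sign of $2q-1+\sqrt{q}\cos s$, whose zero $\cos s=(1-2q)/\sqrt{q}$ may or may not exist depending on $q$. I will handle these cases separately, using that $h'$ has at most one interior critical point on each half-interval.
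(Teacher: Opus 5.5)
Your proposal is correct and follows essentially the same route as the paper. Both split the circle into $(-\theta_1,\theta_1)$, $[\theta_1,\theta_2]$, $(\theta_2,2\pi-\theta_2)$ and $[2\pi-\theta_2,2\pi-\theta_1]$, use $\rn''$ versus $h'$ to get strict monotonicity and a sign change on the two open arcs, and use $|\rn'|$ versus $h$ to get a fixed sign on the two closed arcs. The endpoint-minimum facts you flag close at once: $2q-1+\sqrt{q}\cos s$ is strictly decreasing on $[0,\pi]$, so it changes sign at most once, and it is bounded above by $q-1<0$ on $(\tilde\theta_q,\pi)$, which is exactly the shape information on $h'$ the paper invokes.
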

\begin{proof}
	Notice that $h$ strictly decreases in the interval $[-\tilde{\theta}_q,\tilde{\theta}_q]$ and increases $[\tilde{\theta}_q,2\pi-\tilde{\theta}_q]$. In addition, $h'$ strictly decreases in $[0,\pi]$ when $q\in(0,1/4]$, and increases in $[0,{\theta}_{\tilde{q}}]$ and decreases in $[{\theta}_{\tilde{q}},\pi]$ if $q\in(1/4,1)$. In the latter case, $\theta_{\tilde{q}}:=\arccos(2\sqrt{q}-1/\sqrt{q})\in(0,\tilde{\theta}_q)$. Moreover, $h'(t)$ is symmetric with respect to $t=\pi$.
	\begin{figure}[h]
		\begin{subfigure}{.33\textwidth}
			\centering\includegraphics[scale=1]{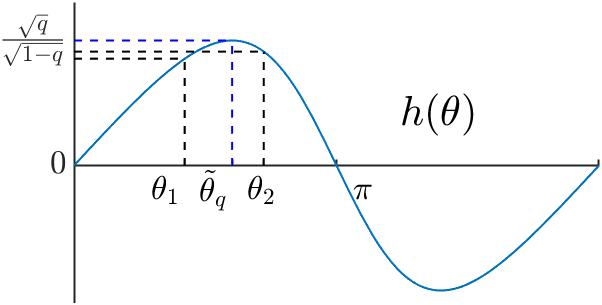}
		\end{subfigure}
		\begin{subfigure}{.33\textwidth}
			\centering\includegraphics[scale=1]{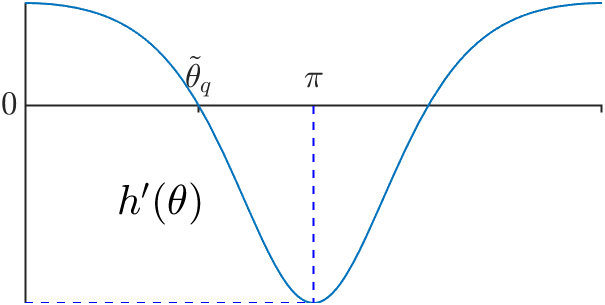}
		\end{subfigure}
		\begin{subfigure}{.33\textwidth}
			\centering\includegraphics[scale=1]{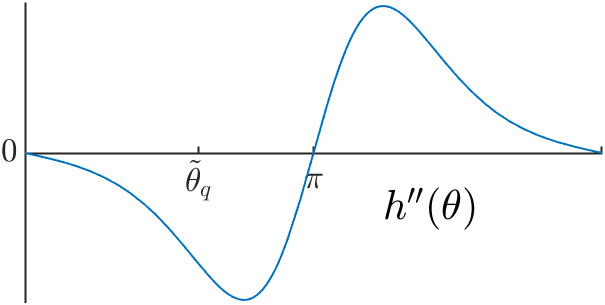}
		\end{subfigure}
		\caption{Graphs of $h$, $h'$, and $h''$, when $q=0.1$ ($q\le1/4$).}
	\end{figure}
	\begin{figure}[h]
		\begin{subfigure}{.33\textwidth}
			\centering\includegraphics[scale=1]{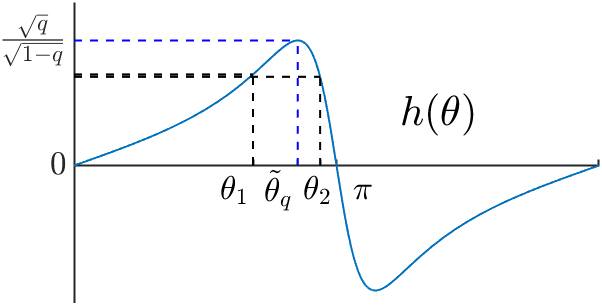}
		\end{subfigure}
		\begin{subfigure}{.33\textwidth}
			\centering\includegraphics[scale=1]{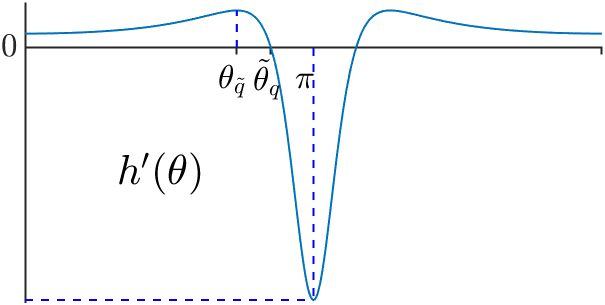}
		\end{subfigure}
		\begin{subfigure}{.33\textwidth}
			\centering\includegraphics[scale=1]{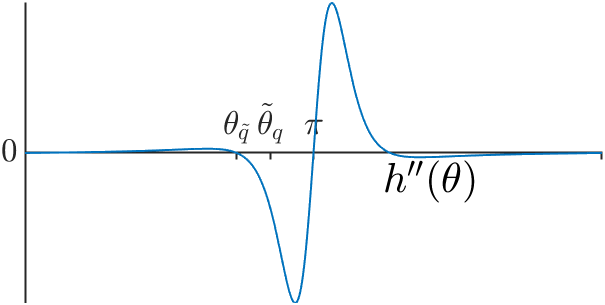}
		\end{subfigure}
		\caption{Graphs of $h$, $h'$, and $h''$, when $q=0.8$ ($1>q>1/4$).}
	\end{figure}

	Given $l=1,2$ and $\eta\in\Ss^1$, we rewrite \eqref{eq:StarSmlCond1} as $f(t):=h(t)-\rn'(t+\vT_{\eta_l})=0$ with $t:=\theta-\vT_{\eta_l}$, and consider the zeros of $f$. 
	The assumption \eqref{eq:StarSmlCond1} implies that $f(-\theta_1)<0<f(\theta_1)$ and $f'>0$ in $(-\theta_1,\theta_1)$, $f(\theta_2)>0>f(\pi-\theta_2)$ and $f'<0$ in $(\theta_2,2\pi-\theta_2)$, and $|f|>0$ in $[\theta_1,\theta_2]\cup[2\pi-\theta_{2},2\pi-\theta_1]$.
	Thus there are exactly two zeros of $f$, one located in $(-\theta_1,\theta_1)$, denoted as $t=\Cp_{1,l}\vT_\eta-\vT_{\eta_l}$, and the other in $(\theta_2,2\pi-\theta_2)$, denoted as $t=\Cp_{2,l}\vT_\eta-\vT_{\eta_l}$. The proof is complete.
\end{proof}
\begin{rem}\label{rem:admissible}
	We note that a necessary condition for \eqref{eq:StarSmlCond0} is
	\begin{equation*}
		|\rn'(t)|<\frac{\sqrt{q}}{\sqrt{1-q}}\AND
		-\frac{\sqrt{q}}{1-\sqrt{q}}<\rn''(t)<\frac{\sqrt{q}}{1+\sqrt{q}},\qquad\mbox{for all $t$},
	\end{equation*}
	that is, the assumption \eqref{eq:StarSmlCond0} in the definition of admissible star-shaped domains. On the other hand, if \eqref{eq:StarSmlCond0} is satisfied, then one can find $\theta_1\in(0,\tilde{\theta}_q)$ and $\theta_2\in(\tilde{\theta}_q,\pi)$ such that $\rn'$ satisfies \eqref{eq:StarSmlCond1}.
	In other words, \eqref{eq:StarSmlCond0} is equivalent to \eqref{eq:StarSmlCond1} for some $\theta_1\in(0,\tilde{\theta}_q)$ and $\theta_2\in(\tilde{\theta}_q,\pi)$.
\end{rem}
Under the notations of Lemma~\ref{lem:4criticStarSml}, it is observed that
\begin{equation}\label{eq:CppiStarBi}
	\Cp_{j,l}(\vT_{\eta}+\pi)=\Cp_{j,l+1}\vT_\eta,\qquad
	(-1)^{l-1}\sgn\sin(\Cp_{j,l}\vT_\eta-\vT_{\eta})=\sgn\rn'(\Cp_{j,l}\vT_\eta),\qquad\mbox{for all $\eta$},
\end{equation}
\begin{equation*}\label{eq:cossintStarBi}
\cos(\Cp_{1,l}\vT_\eta-\vT_{\eta_l})>\cos\theta_1>-\sqrt{q}>\cos\theta_2>\cos(\Cp_{2,l}\vT_\eta-\vT_{\eta_l}),\qquad\mbox{for all $\eta$},
\end{equation*}
and
\begin{equation}\label{eq:rho'=0}
	\Cp_{j,j}\vT_{\eta}=\vT_{\eta},\quad j=1,2,
	\qquad\mbox{for all $\eta$ such that $\rho'(\vT_{\eta})=0$.}
\end{equation}
Furthermore, consider \eqref{eq:criticalStar2} again we can derive that
\begin{equation*}
	\begin{split}
		\sqrt{q}\cos(\Cp_{j,l}\vT_\eta-\vT_{\eta_l}) &= \frac{-\rn'^2+(-1)^{j-1}\varrho}{1+\rn'^2}\big|_{\Cp_{j,l}\vT_\eta}
	=\frac{1-q}{1-(-1)^{j-1}\varrho}\big|_{\Cp_{j,l}\vT_\eta}-1,
	\\\sqrt{q}\sin(\Cp_{j,l}\vT_\eta-\vT_{\eta_l})
	&=\rn'\frac{1+(-1)^{j-1}\varrho }{(1+\rn'^2)}\,\big|_{\Cp_{j,l}\vT_\eta}
	=\frac{(1-q)\rn'}{1-(-1)^{j-1}\varrho}\big|_{\Cp_{j,l}\vT_\eta},
	\end{split}
\end{equation*}
where
\begin{equation}\label{eq:Rstar}
	\varrho (t):=\sqrt{q-(1-q)\rn'^2(t)},
	\qquad\mbox{which satisfies $1-\varrho^2=(1-q)(1+\rn'^2)$}.
\end{equation}
Recall from \eqref{eq:HessianStat} we also obtain that
\begin{equation}\label{eq:detD2}
	\frac{\det D^2\psi_\eta|_{(\Cp_{j,l}\vT_{\eta},\Cp_{j,l}\vT_{\eta}+\delta_{l,2}\pi)}}{\rho^2(\Cp_{j,l}\vT_{\eta})} = (-1)^{j-1}
	\Pare{\varrho-(-1)^{j-1}\frac{(1-q)\rn''}{1-(-1)^{j-1}\varrho}}  |_{\Cp_{j,l}\vT_{\eta}}.
\end{equation}
\begin{lem}\label{lem:crimoreStarSml}
	Under the same assumptions and notations as in Lemma~\ref{lem:4criticStarSml}, we have for each $t\in\RR$ and each $l=1,2$ that
	\begin{equation*}
		\sgn\rn'(\Cp_{1,l}t)=\sgn\rn'(\Cp_{2,l+1}t).
	\end{equation*}
	In other words,
	\begin{equation*}
		\sgn\sin(\Cp_{1,l}t-t)=-\sgn\sin(\Cp_{2,l+1}t-t).
	\end{equation*}
\end{lem}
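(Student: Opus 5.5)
The plan is to put both stationary points into one local coordinate around $\vT_{\eta_l}$ and compare them through their position relative to $\vT_{\eta_l}$, using the monotonicity established in the proof of Lemma~\ref{lem:4criticStarSml}. Fix $t$ and $l$, and write $\vT:=\vT_{\eta_l}$ with $\eta=\vec t$. Since $\eta_{l+1}=-\eta_l$, we have $\vT_{\eta_{l+1}}=\vT+\pi$. Set $\tau:=\theta-\vT$ and $F(\tau):=\rn'(\vT+\tau)$.

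\emph{The point $\Cp_{1,l}t$.} By Lemma~\ref{lem:4criticStarSml}, $\tau_1:=\Cp_{1,l}t-\vT$ is the unique zero in $(-\theta_1,\theta_1)$ of $f_1(\tau):=h(\tau)-F(\tau)$. The proof of that lemma gives $f_1'>0$ there.

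\emph{The point $\Cp_{2,l+1}t$.} Let $\tau_2:=\Cp_{2,l+1}t-\vT$. Lemma~\ref{lem:4criticStarSml}, applied with $\vT+\pi$ in place of $\vT$, gives $\tau_2+\pi\in(\theta_2,2\pi-\theta_2)$ modulo $2\pi$. Hence $\tau_2\in(\theta_2-\pi,\pi-\theta_2)$, an interval around $0$ of radius $\pi-\theta_2<\theta_q$. In this coordinate $\tau_2$ is the unique zero there of
\begin{equation*}
f_2(\tau):=h(\tau+\pi)-F(\tau)=-\frac{\sqrt{q}\sin\tau}{1-\sqrt{q}\cos\tau}-F(\tau).
\end{equation*}
By the same proof (the bound $\rn''>h'(\theta_2)\ge h'$ on $(\theta_2,2\pi-\theta_2)$), $f_2'(\tau)=h'(\tau+\pi)-\rn''<0$ on this interval.

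Now evaluate at $\tau=0$, where $h(0)=h(\pi)=0$, so $f_1(0)=f_2(0)=-F(0)$.
\begin{itemize}
\item If $F(0)=0$, then $\tau_1=\tau_2=0$, both stationary points equal $\vT$, and both sides of the claim vanish (compare \eqref{eq:rho'=0}).
\item If $F(0)>0$, then $f_1(0)<0$ and $f_1$ is increasing, so $\tau_1>0$. Likewise $f_2(0)<0$ and $f_2$ is decreasing, so $\tau_2<0$.
\item If $F(0)<0$, the same argument gives $\tau_1<0<\tau_2$.
\end{itemize}
In every case $\sgn\sin\tau_1=-\sgn\sin\tau_2$, since both $\tau_i$ lie in $(-\pi,\pi)$.

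Finally, convert this to signs of $\rn'$. At the zeros we have $\rn'(\Cp_{1,l}t)=h(\tau_1)$, which has the sign of $\sin\tau_1$ because $1+\sqrt q\cos\tau_1>0$. We also have $\rn'(\Cp_{2,l+1}t)=-\sqrt q\sin\tau_2/(1-\sqrt q\cos\tau_2)$, which has the sign of $-\sin\tau_2$. Therefore $\sgn\rn'(\Cp_{1,l}t)=\sgn\rn'(\Cp_{2,l+1}t)$. The second formulation of the lemma follows from \eqref{eq:CppiStarBi}, since $(-1)^{l-1}$ and $(-1)^{l}$ have opposite signs.

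The main obstacle is the bookkeeping of the window for $\Cp_{2,l+1}$ after the shift by $\pi$. One must check that the zero lies in the same neighbourhood of $\vT$ as $\Cp_{1,l}t$, and that the monotonicity of $f_2$ from Lemma~\ref{lem:4criticStarSml} holds on the whole interval containing both $0$ and $\tau_2$. This is what makes the intermediate-value argument at $\tau=0$ legitimate.
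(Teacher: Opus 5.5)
Your proof is correct, and it takes a genuinely different and more direct route than the paper.

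\textbf{The paper's route.} The paper first reduces to $l=1$ via the shift relation $\Cp_{j,l}(\vT_\eta+\pi)=\Cp_{j,l+1}\vT_\eta$. It then splits on the sign of $\rn'(\Cp_{2,2}t)$ and argues by contradiction. If $\Cp_{1,1}t-t$ had the wrong sign, it compares $\rn'$ at the two stationary points and applies the mean value theorem to $\rn'$ and to $h$ separately. After a further case split on which stationary point is larger, it obtains either $\rn''(t_1)>h'(t_2)$ with $t_2\in(-\theta_1,0)$, or $\rn''(t_1)<h'(t_2)$ with $t_2\in(\theta_2,\pi]$. Both contradict \eqref{eq:StarSmlCond1}.

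\textbf{Your route.} You reuse the strict monotonicity of $h-\rn'(\cdot+\vT)$ on the two windows. This comes from exactly the same two facts about $h'$ that the paper uses:
\begin{equation*}
\min_{[-\theta_1,\theta_1]}h'=\min\{h'(0),h'(\theta_1)\} \quad\text{and}\quad \max_{[\theta_2,2\pi-\theta_2]}h'=h'(\theta_2).
\end{equation*}
You add the observation that both windows contain $\tau=0$, where $h(0)=h(\pi)=0$ forces $f_1(0)=f_2(0)=-\rn'(\vT_{\eta_l})$. Opposite monotonicity then places the two zeros on opposite sides of $0$.

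\textbf{What your route gains.} It removes the case analysis and the mean value theorem bookkeeping, and it treats both values of $l$ at once without the shift relation. It also yields a sharper conclusion: $\sgn\rn'(\Cp_{1,l}t)=\sgn\rn'(\Cp_{2,l+1}t)=\sgn\rn'(\vT_{\eta_l})$. In addition, $\Cp_{1,l}t-\vT_{\eta_l}$ and $\Cp_{2,l+1}t-\vT_{\eta_l}$ (in your chosen representatives) have opposite signs. This immediately gives orderings such as $\Cp_{2,2}t<t<\Cp_{1,1}t$ when $\rho'(t)>0$, which the paper uses later, for instance in \eqref{eq:Cp22112112}.

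\textbf{One caveat.} You cite monotonicity facts from inside the proof of Lemma~\ref{lem:4criticStarSml}, not from its statement. This is harmless, since they follow in one line from \eqref{eq:StarSmlCond1} and the shape of $h'$, and the paper's own argument relies on the same facts.
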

\begin{proof}
	Thanks to the first relation in \eqref{eq:CppiStarBi}, it is sufficient to prove the statement for the case when $l=1$.

	Recall that, $\theta=\Cp_{1,1}t$ is the unique solution to $\rn'(\theta)=h(\theta-t)$ with $\theta-t\in(-\theta_1,\theta_1)$, and $\theta=\Cp_{2,2}t$ the unique solution to $\rn'(\theta)=h(\theta-\vT_{\eta_2})$ with  $\theta-t\in(\theta_2-\pi,\pi-\theta_2)$.

	If $\rn'(\Cp_{2,2}t)=h(\Cp_{2,2}t-\vT_{\eta_2})=0$, then $\sin(\Cp_{2,2}t-t)=0$, and hence $\Cp_{2,2}t=t$ and $\rn'(t)=0$. Consequently, $\Cp_{1,1}t=t$, and so  $\rn'(\Cp_{1,1}t)=0$. 

	Suppose that $\rn'(\Cp_{2,2}t)=h(\Cp_{2,2}t-\vT_{\eta_2})>0$. Then $\sin(\Cp_{2,2}t-\vT_{\eta_2})>0$ and $\Cp_{2,2}t-t\in(\theta_2-\pi,0)$.  We claim that $\Cp_{1,1}t-t\in(0,\theta_1)$, and hence $\rn'(\Cp_{1,1}t)>0$. Otherwise if $\Cp_{1,1}t-t\in(-\theta_1,0]\subset(-\tilde{\theta}_q,0]$, then $\rn'(\Cp_{1,1}t)\le 0<\rn'(\Cp_{2,2}t)$ and thus $\Cp_{1,1}t\neq\Cp_{2,2}t$. If $\Cp_{2,2}t>\Cp_{1,1}t$ (in the interval $(\pi-t,t)$) then
	\begin{equation*}
			\rn'(\Cp_{2,2}t)-\rn'(\Cp_{1,1}t)
			>-h(\Cp_{1,1}t-t)
			>h(\Cp_{2,2}t-t)-h(\Cp_{1,1}t-t),
	\end{equation*}
	and hence $\rn''(t_1)>h'(t_2)=h'(-t_2)$ for some $t_1\in(\Cp_{1,1}t, \Cp_{2,2}t)\subset(\pi-t,t)$ and $t_2\in (\Cp_{1,1}t, \Cp_{2,2}t)-t\subset(-\theta_1,0)$. However, this contradicts \eqref{eq:StarSmlCond1} since
	\begin{equation*}
		\min\{h'(0),h'(\theta_1)\}=\min_{t\in[0,\theta_1]}\{h'(t)\}
		\AND h'(\theta_2)=\max_{t\in[\theta_2,\pi]}\{h'(t)\}.
	\end{equation*}
	Then we can only have $t-\theta_q<\Cp_{2,2}t<\Cp_{1,1}t\le t$. In this case we derive that
	\begin{equation*}
		\rn'(\Cp_{1,1}t)-\rn'(\Cp_{2,2}t)
		\le-h(\Cp_{2,2}t-t+\pi)
		<h(\Cp_{1,1}t-t+\pi)-h(\Cp_{2,2}t-t+\pi),
	\end{equation*}
	and consequently $\rn''(t_1)<h'(t_2)=h'(2\pi-t_2)$ for some $t_1\in(\Cp_{2,2}t,\Cp_{1,1}t)$ and $t_2\in (\Cp_{2,2}t,\Cp_{1,1}t)-t+ \pi\subset(\theta_2,\pi]$. Therefore $\rn''(t_1)<h'(\theta_2)$, which contradicts \eqref{eq:StarSmlCond1} again.

	With analogous arguments we can show that $\rn'(\Cp_{2,2}t)<0$ implies $\Cp_{2,2}t-t\in(0,\pi-\theta_2)$, $\Cp_{1,1}t-t\in(-\theta_1,0)$ and $\rn'(\Cp_{1,1}t)<0$.
\end{proof}

Next, we shall consider the equation \eqref{eq:criticalStar2} from another point of view. Given any $\theta$, define
\begin{equation*}
	\gamma(\theta):=\arcsin\frac{\rn'(\theta)}{\sqrt{1+\rn'^2(\theta)}}\in(-{\pi}/{2},\pi/2).
\end{equation*}
We can then rewrite \eqref{eq:criticalStar2} as
\begin{equation*}
	\sqrt{q}\sin(\theta-\gamma(\theta)-\vT_{\eta_l})=\sin\gamma(\theta).
\end{equation*}
Adapting the notations in Lemma~\ref{lem:4criticStarSml} we obtain that
\begin{equation*}
	\vT_{\eta}=\Pc_{j,l}(\theta):=\theta-\gamma(\theta)-(-1)^{j-1}\arcsin\frac{\sin\gamma(\theta)}{\sqrt{q}}+\delta_{j,l}\pi,\qquad \mbox{where $\theta=\Cp_{j,l}\vT_{\eta}$.}
\end{equation*}
Therefore, $\Pc_{j,l}$ can be deemed as the (left) inverse of $\Cp_{j,l}$.
Direct calculation yields
\begin{equation}\label{eq:Sjl'}
	\gamma'(\theta)=\frac{\rn''(\theta)}{1+\rn'^2(\theta)}
	\AND\Pc_{j,l}'(\theta)
	=1-\frac{(-1)^{j-1}(1-q)\rn''(\theta)}{\varrho(\theta)\Pare{1-(-1)^{j-1}\varrho(\theta)}},
\end{equation}
where the function $\varrho$ is defined in \eqref{eq:Rstar}.
Recalling \eqref{eq:detD2} we observe for any $(\theta,\vT_\xi)=(\theta,\theta+\delta_{l,2}\pi)\in\cS_{\vT_\eta}$ that
\begin{equation*}
	\det D^2\psi_\eta(\theta,\vT_\xi)\neq 0\qquad\mbox{if and only if}\qquad \Pc_{j,l}'(\theta)\neq 0.
\end{equation*}
Moreover, under the condition \eqref{eq:StarSmlCond0} in Theorem~\ref{thm:main1Star}, or \eqref{eq:StarSmlCond1} in Lemma~\ref{lem:4criticStarSml}, we have that
\begin{equation*}
	\Pc_{j,l}'>0\qquad\mbox{in a neighborhood of any $t$ such that $\rho'(t)=0$}.
\end{equation*}
As a consequence,
\begin{equation*}
	\Cp_{j,l}'>0\qquad\mbox{in a neighborhood of any $t$ such that $\rho'(t+\delta_{j,l}\pi)=0$}.
\end{equation*}
If $\Omega$ is an admissible star-shaped domain, then $\Cp_{j,l}$ monotonically increases everywhere. We summarize the result in the following.
\begin{lem}\label{lem:critMapStar}
	Assume that
	\begin{equation*}\label{eq:StarCond2}
		\frac{(-1)^{j-1}(1-q)\rn''(t)}{\varrho (t)\Pare{1-(-1)^{j-1}\varrho (t)}}<1,\qquad
		\mbox{for $j=1,2$ and all $t$}.
	\end{equation*}
	Then for each $j,l=1,2$, $\Cp_{j,l}:\RR/(2\pi\mathbb{Z})\to\RR /\{2\pi\}$ is a $C^1$  strictly increasing bijection.
	Moreover,
	\begin{equation*}
		\det D^2\psi_\eta(\theta,\vT_\xi)\neq 0\qquad\mbox{for all $(\theta,\vT_\xi)=(\Cp_{j,l}\vT_\eta,\Cp_{j,l}\vT_\eta+\delta_{l,2}\pi)$}.
	\end{equation*}
\end{lem}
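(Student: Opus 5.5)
Under the stated assumption, \eqref{eq:Sjl'} gives $\Pc_{j,l}'>0$ everywhere. The idea is therefore to show that $\Pc_{j,l}$ is a strictly increasing degree-one map of the circle that inverts $\Cp_{j,l}$, and then to transfer monotonicity and regularity to $\Cp_{j,l}$ by the inverse function theorem. We work with $\Cp_{j,l}$ as provided by Lemma~\ref{lem:4criticStarSml}, whose hypotheses hold by the admissibility/Remark~\ref{rem:admissible}.

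\emph{Step 1: $\Pc_{j,l}$ is well defined and $C^1$.} Admissibility gives $|\rn'|<\sqrt{q}/\sqrt{1-q}$. This is equivalent to $\sin^2\gamma=\rn'^2/(1+\rn'^2)<q$, so $|\sin\gamma|/\sqrt{q}<1$. Hence $\arcsin(\sin\gamma/\sqrt q)$ is $C^1$ in $\theta$, since $\rn'\in C^1$, and so $\Pc_{j,l}\in C^1(\RR)$. Moreover $\Pc_{j,l}(\theta+2\pi)=\Pc_{j,l}(\theta)+2\pi$, because $\gamma$ is $2\pi$-periodic. So $\Pc_{j,l}$ lifts a degree-one circle map.

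\emph{Step 2: $\Pc_{j,l}$ is a strictly increasing bijection inverse to $\Cp_{j,l}$.} By the hypothesis and \eqref{eq:Sjl'}, $\Pc_{j,l}'>0$. A strictly increasing lift of degree one is a bijection $\RR\to\RR$ and descends to a bijection of $\RR/(2\pi\ZZ)$. It remains to check that the branch choices match, i.e.\ that for $\theta=\Cp_{j,l}\vT_\eta$ the derivation preceding \eqref{eq:Sjl'} gives $\vT_\eta=\Pc_{j,l}(\theta)$ modulo $2\pi$. This uses the localization in Lemma~\ref{lem:4criticStarSml}: for $j=1$, $\theta-\vT_{\eta_l}\in(-\tilde\theta_q,\tilde\theta_q)$, and for $j=2$, $\theta-\vT_{\eta_l}\in(\tilde\theta_q,2\pi-\tilde\theta_q)$.

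The sign of the relevant cosine picks out the $\arcsin$ branch. The identity $\sqrt q\sin(\theta-\gamma-\vT_{\eta_l})=\sin\gamma$ then forces $\theta-\gamma-\vT_{\eta_l}$ to equal either $\arcsin(\sin\gamma/\sqrt q)$ or $\pi-\arcsin(\sin\gamma/\sqrt q)$. One has to check that $j=1$ corresponds to the principal branch, and that the $\delta_{j,l}\pi$ term accounts for $\eta_l=\pm\eta$. This bookkeeping is the main place where care is needed; I would verify it at a point where $\rho'=0$ using \eqref{eq:rho'=0}, and then extend by continuity.

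Since $\Cp_{j,l}$ is single valued (uniqueness from Lemma~\ref{lem:4criticStarSml}), we get $\Pc_{j,l}\circ\Cp_{j,l}=\mathrm{id}$. Because $\Pc_{j,l}$ is bijective, $\Cp_{j,l}=\Pc_{j,l}^{-1}$. By the inverse function theorem $\Cp_{j,l}$ is $C^1$ with $\Cp_{j,l}'=1/\Pc_{j,l}'(\Cp_{j,l})>0$, hence strictly increasing.

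\emph{Step 3: nondegeneracy.} From \eqref{eq:detD2} and \eqref{eq:Sjl'}, at $\theta=\Cp_{j,l}\vT_\eta$ we have
$$\det D^2\psi_\eta=(-1)^{j-1}\rho^2\varrho\,\Pc_{j,l}'(\theta).$$
Here $\varrho>0$ by the bound on $\rn'$, and $\Pc_{j,l}'>0$, so the determinant is nonzero, completing the proof. The main obstacle is the branch identification in Step 2; everything else is direct.
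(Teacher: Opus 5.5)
Your proposal is correct and follows the paper's route. The paper likewise treats $\Pc_{j,l}$ as the left inverse of $\Cp_{j,l}$, reads off $\Pc_{j,l}'>0$ from \eqref{eq:Sjl'}, and gets nondegeneracy from \eqref{eq:detD2}, which amounts to your identity $\det D^2\psi_\eta=(-1)^{j-1}\rho^2\varrho\,\Pc_{j,l}'$; your degree-one-lift argument for bijectivity and your branch bookkeeping make explicit what the paper leaves implicit. When you carry out the check at a zero of $\rho'$, you will find the additive constant must be $\pi$ exactly when $j\neq l$ (i.e.\ $\delta_{j,l+1}\pi$ rather than the printed $\delta_{j,l}\pi$, since $\Cp_{1,1}\vT_\eta=\vT_\eta$ there), which is harmless because it does not affect $\Pc_{j,l}'$.
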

We deduce from \eqref{eq:asympGen_diffeta} that, for all $\eta\in\Ss^1$,
\begin{equation}\label{eq:IntNasympStar}
	\text{\small$\frac{k\Int_k^{(N)}(\vT_\eta)}{2\pi\pare{ik\sqrt{q}}^N}=
		\sum_{j,l=1}^{2}
		\frac{\Psi_\eta e^{\frac{i\pi}4\sgn D^2\psi_\eta+ik\psi_\eta}}{\abs{\det D^2\psi_\eta}^{1/2}}\big|_{(\Cp_{j,l}\vT_\eta,\Cp_{j,l}\vT_\eta+\delta_{l,2}\pi)}
		\phi\Pare{(-1)^{l-1}\vec\Cp_{j,l}\vT_\eta}f_{j,l}^N(\vT_\eta)
		+O(k^{N-2})$},
\end{equation}
as $k\to \infty$, where
\begin{equation}\label{eq:fjl}
	f_{j,l}(\vT_\eta)=\rho(\Cp_{j,l}\vT_\eta) \sin(\Cp_{j,l}\vT_\eta-\vT_{\eta}).
\end{equation}

We will need the following elementary result concerning $4\times4$ Vandermonde matrices, of which the proof we choose to omit.
\begin{lem}\label{lem:Vander}
	Consider the linear algebraic system
	\begin{equation*}
		\begin{bmatrix}
			1&1&1&1\\a_{1,1}&a_{1,2}&a_{2,1}&a_{2,2}\\a_{1,1}^2&a_{1,2}^2&a_{2,1}^2&a_{2,2}^2\\a_{1,1}^3&a_{1,2}^3&a_{2,1}^3&a_{2,2}^3
		\end{bmatrix}
		\begin{bmatrix}
			z_{1,1}\\z_{1,2}\\z_{2,1}\\z_{2,2}
		\end{bmatrix}
		= 0,
	\end{equation*}
	with $\sgn a_{1,1}=-\sgn a_{2,2}>0$ and $\sgn a_{2,1}=-\sgn a_{1,2}\ge0$. If $a_{1,1}= a_{2,1}$ and $a_{1,2}=a_{2,2}$ then $z_{1,1}+z_{2,1}=z_{1,2}+z_{2,2}=0$. If $a_{1,1}=a_{2,1}$ and $a_{1,2}\neq a_{2,2}$, then $z_{1,1}+z_{2,1}=0$ and $z_{1,2}=z_{2,2}=0$. If $a_{2,1}=0$, then $z_{1,2}+z_{2,1}=0$ and $z_{1,1}=z_{2,2}=0$. If $a_{2,1}\neq0$, $a_{1,1}\neq a_{2,1}$ and $a_{1,2}\neq a_{2,2}$, then $z_{1,1}=z_{2,1}=z_{1,2}=z_{2,2}=0$.
\end{lem}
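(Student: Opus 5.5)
The plan is to reduce every case to the invertibility of a square Vandermonde matrix on \emph{distinct} nodes, after grouping together the unknowns whose nodes coincide. Write the system row by row as
\begin{equation*}
	\sum_{j,l=1}^{2} z_{j,l}\,a_{j,l}^N=0,\qquad N=0,1,2,3 .
\end{equation*}
Let $b_1,\dots,b_m$ ($m\le 4$) be the distinct values among $\{a_{j,l}\}$, and set $Z_r:=\sum_{a_{j,l}=b_r} z_{j,l}$. The system becomes $\sum_{r=1}^m Z_r b_r^N=0$ for $N=0,\dots,3$. Its first $m$ rows form the $m\times m$ Vandermonde matrix $(b_r^N)_{0\le N\le m-1,\,1\le r\le m}$, whose determinant $\prod_{r<s}(b_s-b_r)$ is nonzero. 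Hence $Z_r=0$ for every $r$. Each case of the lemma then amounts to identifying exactly which nodes coincide.

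Next I record what the sign hypotheses give. We have $a_{1,1}>0$ and $a_{2,2}<0$. Also $a_{2,1}\ge0$, $a_{1,2}\le0$, and $a_{2,1}=0$ if and only if $a_{1,2}=0$, because $\sgn a_{2,1}=-\sgn a_{1,2}$. Consequently $a_{1,1}\neq a_{2,2}$, $a_{1,1}\neq a_{1,2}$ and $a_{2,1}\neq a_{2,2}$ always hold. The only possible coincidences are therefore $a_{1,1}=a_{2,1}$, $a_{1,2}=a_{2,2}$, and $a_{2,1}=a_{1,2}$, and the last one happens exactly when both vanish.

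Now the four cases follow in order.
\begin{enumerate}[(i)]
	\item If $a_{1,1}=a_{2,1}$ and $a_{1,2}=a_{2,2}$, then $a_{2,1}>0$, so $a_{1,2}<0$. The distinct nodes are $a_{1,1}>0>a_{1,2}$, and the grouped sums give $z_{1,1}+z_{2,1}=0$ and $z_{1,2}+z_{2,2}=0$.
	\item If $a_{1,1}=a_{2,1}$ and $a_{1,2}\neq a_{2,2}$, then again $a_{2,1}>0$ and $a_{1,2}<0$. The three nodes $a_{1,1}$, $a_{1,2}$, $a_{2,2}$ are distinct, which yields $z_{1,1}+z_{2,1}=0$ and $z_{1,2}=z_{2,2}=0$.
	\item If $a_{2,1}=0$, then $a_{1,2}=0$ as well. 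The nodes $a_{1,1}>0=a_{2,1}=a_{1,2}>a_{2,2}$ give three distinct values, hence $z_{1,1}=z_{2,2}=0$ and $z_{1,2}+z_{2,1}=0$.
	\item If $a_{2,1}\neq0$, $a_{1,1}\neq a_{2,1}$ and $a_{1,2}\neq a_{2,2}$, then $a_{2,1}>0>a_{1,2}$. All six pairwise differences are nonzero, so the full $4\times4$ Vandermonde matrix is invertible and all $z_{j,l}=0$.
\end{enumerate}

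I expect no real obstacle here. The only point needing care is the sign bookkeeping, specifically that $a_{2,1}=0$ forces $a_{1,2}=0$, and that in cases (i), (ii) and (iv) the node $a_{1,2}$ is strictly negative, so it cannot coincide with $a_{1,1}$ or $a_{2,1}$.
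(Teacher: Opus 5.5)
Your proof is correct. Grouping the unknowns by coincident nodes and using invertibility of the Vandermonde matrix on the distinct nodes settles all four cases. The sign hypotheses show that the only possible coincidences are $a_{1,1}=a_{2,1}$, $a_{1,2}=a_{2,2}$, and $a_{1,2}=a_{2,1}=0$.

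The paper omits this proof as elementary. Your reduction is the standard Vandermonde nonsingularity argument that the paper relies on, in the same way it invokes nonsingularity for distinct nodes in the ellipse case.
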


\subsection{Proof of the Main Results}
In this subsection, given $\phi\in C^1(\Ss^1)$, we sometimes use the notation $\phi(\vT_\xi)$ instead of $\phi(\xi)$, and treat $\phi$ as a function defined on $\RR/ \{2\pi\}$ for convenience.
\subsubsection{Proof of Theorem~\ref{thm:StarAnly}}
Thanks to Lemmas~\ref{lem:starting}~and~\ref{lem:kinfty}, Theorem~\ref{thm:StarAnly} is a direct consequence of the following.
\begin{prop}\label{prop:thm:StarAnly}
	Let $\phi\in C^1(\Ss^1)$ \Blu{be nonzero almost everywhere in $\Ss^1$.} 
	and let $\rho$ be the radius function of an admissible $C^2$ star-shaped domain. Suppose that there exist a sequence of wavenumbers $k$'s tending to infinity such that $\Int_k(\vT_\eta)=0$ for all $\vT_\eta$, where $\Int_k$ is defined in \eqref{eq:IntFinal}. Then $\rho$ must satisfy
	\begin{equation*}
		\sgn\rho'(t)=-\sgn\rho'(t+\pi)\qquad \mbox{for all $t$,}
	\end{equation*}
	and $\rho''(t)=\rho''(t+\pi)=0$ whenever $\rho'(t)=0$.
\end{prop}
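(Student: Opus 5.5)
Since $\Int_k\equiv0$ for a sequence $k\to\infty$, every derivative $\Int_k^{(N)}$ vanishes for these $k$. Admissibility gives Lemma~\ref{lem:4criticStarSml} and Lemma~\ref{lem:critMapStar}, so the four stationary points $\Cp_{j,l}\vT_\eta$ are nondegenerate. Hence \eqref{eq:IntNasympStar} holds for every $\eta$. Taking $N=0,1,2,3$, multiplying by $k/(2\pi(ik\sqrt q)^N)$, and passing to a subsequence on which the unimodular phases $e^{\frac{i\pi}4\sgn D^2\psi_\eta+ik\psi_\eta}$ converge to some $\omega_{j,l}$, we obtain the Vandermonde system of Lemma~\ref{lem:Vander}. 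Its nodes are $a_{j,l}=f_{j,l}(\vT_\eta)$ from \eqref{eq:fjl}, and its unknowns are
\[
z_{j,l}=\omega_{j,l}\,\Psi_\eta\,\abs{\det D^2\psi_\eta}^{-1/2}\,\phi\Pare{(-1)^{l-1}\vec\Cp_{j,l}\vT_\eta}.
\]
By Lemma~\ref{lem:Psi=0}, $z_{j,l}=0$ exactly when $\phi$ vanishes at the corresponding point.

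To use Lemma~\ref{lem:Vander} we must check its sign hypotheses. By \eqref{eq:CppiStarBi}, $\sgn f_{j,l}=\sgn\sin(\Cp_{j,l}\vT_\eta-\vT_\eta)=(-1)^{l-1}\sgn\rn'(\Cp_{j,l}\vT_\eta)$. By Lemma~\ref{lem:crimoreStarSml}, $\sgn f_{1,1}=-\sgn f_{2,2}$ and $\sgn f_{2,1}=-\sgn f_{1,2}$. After relabelling, or replacing $\vT_\eta$ by $\vT_\eta+\pi$, the hypotheses of Lemma~\ref{lem:Vander} hold whenever $\rn'(\Cp_{1,1}\vT_\eta)\neq0$.

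The main step is a case analysis on the conclusions of Lemma~\ref{lem:Vander}. Because $\phi\neq0$ a.e. and each $\Cp_{j,l}$ is a $C^1$ increasing bijection (Lemma~\ref{lem:critMapStar}), the set of $\eta$ where some $z_{j,l}=0$ has measure zero. This is the step I expect to need the most care: preimages of null sets under $\Cp_{j,l}$ need not be null where $\Cp_{j,l}'=0$. Since $\Pc_{j,l}$ is $C^1$ and $\Cp_{j,l}=\Pc_{j,l}^{-1}$ for the fixed $l$, we have $\Cp_{j,l}^{-1}(E)=\Pc_{j,l}(E)$, which is null whenever $E$ is null because a $C^1$ map sends null sets to null sets. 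So for a.e. $\vT_\eta$ all $z_{j,l}\ne0$, and Lemma~\ref{lem:Vander} forces degenerate nodes: $a_{2,1}=0$, or $a_{1,1}=a_{2,1}$ and $a_{1,2}=a_{2,2}$. In particular, the generic case $a_{2,1}\ne0$ with both pairs distinct is excluded on a full-measure set.

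\textbf{Deriving the sign condition.} On the open set where $\rn'(\Cp_{2,1}\vT_\eta)\neq0$ we have $a_{2,1}\neq0$, so the equalities $f_{1,1}=f_{2,1}$ and $f_{1,2}=f_{2,2}$ hold a.e. there, and hence everywhere there by continuity. But $f_{1,1}$ and $f_{2,1}$ have signs $\sgn\rn'(\Cp_{1,1}\vT_\eta)$ and $\sgn\rn'(\Cp_{2,1}\vT_\eta)=\sgn\rn'(\Cp_{1,2}\vT_\eta)$. Writing $t=\Cp_{1,1}\vT_\eta$, the point $\Cp_{1,2}\vT_\eta$ is "antipodal-related" to $t$ through \eqref{eq:CppiStarBi}. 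Tracking this shows $\rn'(t)$ and $\rn'$ at the partner point share a sign. From this I would deduce that any $t_0$ with $\rho'(t_0)\ne0$ and $\rho'(t_0)\rho'(t_0+\pi)\ge0$ leads to a contradiction, by choosing $\vT_\eta$ near $t_0$ and using \eqref{eq:rho'=0}-type continuity. This gives $\sgn\rho'(t)=-\sgn\rho'(t+\pi)$.

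\textbf{Deriving the second-derivative condition.} At a zero $t$ of $\rho'$, take $\vT_\eta=t$, so $\Cp_{1,1}t=\Cp_{2,2}t=t$. Expanding $f_{1,1}-f_{2,1}$ and $f_{1,2}-f_{2,2}$ to first order in $\vT_\eta-t$ should produce coefficients involving $\rn''(t)$ and $\rn''(t+\pi)$, through $\Cp_{j,l}'$ computed from \eqref{eq:Sjl'}. These equalities can only hold identically in a neighbourhood if $\rho''(t)=\rho''(t+\pi)=0$. Alternatively, if $\rho''(t)\ne0$, the degeneracy of $D^2\psi$ near the triple stationary point of \eqref{eq:intInt} from the earlier Proposition could be used to force $\phi(\pm\vec t)=0$ on an open set, contradicting $\phi\ne0$ a.e.
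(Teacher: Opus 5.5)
Your Vandermonde set-up matches the paper's, but the step that is meant to prove $\sgn\rho'(t)=-\sgn\rho'(t+\pi)$ has a genuine gap. With all $z_{j,l}\neq0$, Lemma~\ref{lem:Vander}, once its abstract labels are matched to the $f_{j,l}$ by sign, leaves exactly two possibilities. (The case $a_{2,1}=0$ is also excluded, since it forces $z_{1,1}=z_{2,2}=0$.) The first is $f_{1,1}=f_{2,1}$, $f_{2,2}=f_{1,2}$: this is the case where $\rn'$ has the same sign at $\Cp_{1,1}t$ and at the antipodal-arc points $\Cp_{2,1}t,\Cp_{1,2}t$, i.e.\ \eqref{eq:11212212}. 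The second is $f_{1,1}=f_{1,2}$, $f_{2,2}=f_{2,1}$, the opposite-sign case \eqref{eq:11122221}. You fix the first pairing without justification, derive that the signs agree, and then assert that agreeing signs give a contradiction. Agreement of signs is not a contradiction; it is exactly configuration \eqref{eq:11212212}, the one that must be excluded, and your sketch contains no argument excluding it.

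The paper excludes it with the half of Lemma~\ref{lem:Vander} you discard. The relation $z_{1,1}+z_{2,1}=0$ gives $|\phi(\Cp_{1,1}t)|^2=\Gp_{11,21}(t)|\phi(\Cp_{2,1}t)|^2$. At the right end $\tau$ of a monotonicity interval, $\Gp_{11,21}(\tau)>1$, because of the factor $(1+\sqrt q)^2/(1-\sqrt q)^2$ together with $\rho''(\tau),\rho''(\tau+\pi)\le0$. A maximum-modulus argument along $\Cp_{2,1}^{-1}\tilde{\Cp}_{1,1}\tilde{\Cp}_{2,1}^{-1}$ then contradicts $\phi\neq0$ a.e.

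If you want to use the nodes only, you need an actual argument, and one does seem available. From the paper's formula for $\sqrt q\sin(\Cp_{j,l}\vT_\eta-\vT_{\eta_l})$ one gets $f_{j,l}(t)=(-1)^{l-1}(1-q)\rho'/(\sqrt q(1-(-1)^{j-1}\varrho))$ evaluated at $\Cp_{j,l}t$. Suppose $\rho'>0$ on both $(0,\tau)$ and $(\pi,\pi+\tau)$, and let $P,Q$ be the maxima of $\rho'/(1-\varrho)$ on $[0,\tau]$ and $[\pi,\pi+\tau]$. The two same-sign equalities then give both $P<Q$ and $Q<P$, since $1+\varrho>1-\varrho$. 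Either way, you also need the paper's continuity argument that the zero sets of $\rho'$ on the two arcs match: a zero of $\rho'(\Cp_{1,2}\cdot)$ at some $t_1\in(0,\tau)$ would force $\rho'(\Cp_{j,j}t_1)=0$. Choosing $\vT_\eta$ near $t_0$ does not by itself give this.

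In the $\rho''$ step you expand $f_{1,1}-f_{2,1}$ and $f_{1,2}-f_{2,2}$, which vanish only in the excluded configuration. The identities that actually hold are $f_{1,1}=f_{1,2}$ and $f_{2,2}=f_{2,1}$. With these, your first-order idea does work and is a shorter alternative to the paper's $\Gp_{11,12}$ maximum argument. Take a zero $t_0$ of $\rho'$ that ends a nonvanishing interval, so that $\rho'(t_0+\pi)=0$ too. Put $c=(1+\sqrt q)/\sqrt q$, $d=(1-\sqrt q)/\sqrt q$, $x=\rn''(t_0)$, $y=\rn''(t_0+\pi)$. Differentiating at $t_0$, using $\Cp_{j,l}t_0=t_0+\delta_{j,l+1}\pi$ and the formula for $\Pc_{j,l}'$, gives $\rho(t_0)\,cx/(1-cx)=-\rho(t_0+\pi)\,cy/(1-cy)$ and $\rho(t_0)\,dx/(1+dx)=-\rho(t_0+\pi)\,dy/(1+dy)$. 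The first relation forces $x,y$ to vanish together or to have opposite signs. In the latter case, dividing gives $(1+dx)/(1-cx)=(1+dy)/(1-cy)$, which is impossible because $s\mapsto(1+ds)/(1-cs)$ is strictly increasing on the admissible range $(-1/d,1/c)$; hence $x=y=0$. Your fallback via the first Proposition does not work. It yields $\phi(\pm\vec t)=0$ only at isolated points of $\tilde{\mathcal C}_0$, which is compatible with $\phi\neq0$ a.e. Moreover, where $\rho''(t)\neq0$ the Hessian is nondegenerate, not degenerate.
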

\begin{proof}
	We assume (without loss of generality) that
	\begin{equation}\label{eq:prfrhoStar0}
		\mbox{$|\rho'|>0$ \quad in $(0,\tau)$ \AND $\rho'(0)=\rho'(\tau)=0$},
	\end{equation}
	for some $\tau\in(0,2\pi)$. Then by \eqref{eq:CppiStarBi}, \eqref{eq:rho'=0} and Lemma~\ref{lem:critMapStar} we deduce that, $\Cp_{1,1},\Cp_{2,2}$ are strictly increasing $C^1$ bijections onto $(0,\tau)$. Moreover, Lemma~\ref{lem:crimoreStarSml} implies that $\sgn f_{1,1}(\vT_\eta) \sgn f_{2,2}(\vT_\eta)=-1$ for all $\vT_\eta\in(0,\tau)$.
	Similar to the proof of Proposition~\ref{prop:ellip}, regarding \eqref{eq:IntNasympStar}, we can formulate a linear homogeneous algebraic equation with a $4\times 4$ Vandermonde coefficient matrix whose entries are generated by $f_{j,l}^N(\vT_\eta)$, $j,l=1,2$, $N=0,\ldots,3$, where the ``unknowns'' are \begin{equation}\label{eq:VandUnknown}
		\frac{\phi(\xi)\Psi_\eta(\theta,\vT_\xi)\omega_{j,l}}{\abs{\det D^2\psi_\eta(\theta,\vT_\xi)}^{1/2}},
		\qquad\mbox{with $\theta=\Cp_{j,l}\vT_\eta$, $\vT_\xi=\Cp_{j,l}\vT_\eta+\delta_{l,2}\pi$, $\omega_{j,l}\in\CC$, $|\omega_{j,l}|=1$.}
	\end{equation}
	\Blu{Since $\phi$ is nonzero almost everywhere, by} 
	Lemma~\ref{lem:Vander} we then must have $\sgn f_{1,2}(\vT_\eta) \sgn f_{2,1}(\vT_\eta)=-1$ for almost all $\vT_\eta\in(0,\tau)$.
	Combining \eqref{eq:CppiStarBi} we further obtain for almost all $t\in(0,\tau)$ that one of the following cases holds:
	\begin{equation}\label{eq:11212212}
		\begin{split}
			&\sgn\rho'(\Cp_{1,2}t)=\sgn\rho'(\Cp_{2,1}t)=\sgn\rho'(t)
			\\&\rho(\Cp_{1,1}t) \sin(\Cp_{1,1}t-t)=\rho(\Cp_{2,1}t) \sin(\Cp_{2,1}t-t)
			,\quad
			|\phi(\Cp_{1,1}t)|^2=\Gp_{11,21}(t)|\phi(\Cp_{2,1}t)|^2,
			\\&\rho(\Cp_{2,2}t) \sin(\Cp_{2,2}t-t)=\rho(\Cp_{1,2}t) \sin(\Cp_{1,2}t-t)
			,\quad
			|\phi(\Cp_{2,2}t+\pi)|^2=\Gp_{22,12}(t)|\phi(\Cp_{1,2}t-\pi)|^2,
		\end{split}
	\end{equation}
	or
	\begin{equation}\label{eq:11122221}
		\begin{split}
			&\sgn\rho'(\Cp_{1,2}t)=\sgn\rho'(\Cp_{2,1}t)=-\sgn\rho'(t),
			\\&\rho(\Cp_{1,1}t) \sin(\Cp_{1,1}t-t)=\rho(\Cp_{1,2}t) \sin(\Cp_{1,2}t-t)
			,\quad
			|\phi(\Cp_{1,1}t)|^2=\Gp_{11,12}(t)|\phi(\Cp_{1,2}t-\pi)|^2,
			\\&\rho(\Cp_{2,2}t) \sin(\Cp_{2,2}t-t)=\rho(\Cp_{2,1}t) \sin(\Cp_{2,1}t-t)
			,\quad
			|\phi(\Cp_{2,2}t+\pi)|^2=\Gp_{22,21}(t)|\phi(\Cp_{2,1}t)|^2.
		\end{split}
	\end{equation}
	Hereinafter, for $j_1,j_2,l_1,l_2=1,2$,
	\begin{equation*}
		\begin{split}
			\Gp_{j_1l_1,j_2l_2}(t):=&\ABs{\frac{\det D^2\psi_{\vec t}}{\Psi_{\vec t}^2}}_{(\Cp_{j_1,l_1}t, \Cp_{j_1,l_1}t+\delta_{l_1,2}\pi)} \ABs{\frac{\Psi_{\vec t}^2}{\det D^2\psi_{\vec t}}}_{(\Cp_{j_2,l_2}t, \Cp_{j_2,l_2}t+\delta_{l_2,2}\pi)}
			\\=&\frac{\varrho-(-1)^{j_1-1}\frac{(1-q)\rn''}{1-(-1)^{j_1-1}\varrho}}{\Pare{1-(-1)^{j_1-1}\varrho}^2}\big|_{\Cp_{j_1,l_1}t}~~ \frac{\Pare{1-(-1)^{j_2-1}\varrho}^2}{\varrho-(-1)^{j_2-1}\frac{(1-q)\rn''}{1-(-1)^{j_2-1}\varrho}}\big|_{\Cp_{j_2,l_2}t}\neq 0.
		\end{split}
	\end{equation*}

	We prove that either \eqref{eq:11212212} holds for all $t\in(0,\tau)$, or \eqref{eq:11122221} holds for all $t\in(0,\tau)$. Otherwise there exists $t_1\in(0,\tau)$ and $t_2\in[0,\tau]$ such that $t_1\neq t_2$, $\rho'(\Cp_{1,2}t_1)=0$ and $\rho'(\Cp_{1,2}t)\neq0$ for all $t$ in between $t_1$ and $t_2$, say, all $t\in(t_1,t_2)$. Then either \eqref{eq:11212212} holds for all $t\in(t_1,t_2)$, or \eqref{eq:11122221} holds for all $t\in(t_1,t_2)$. Hence by continuity either \eqref{eq:11212212} or \eqref{eq:11122221} holds for $t=t_1$, which implies that $\sin(\Cp_{j,j}t_1-t_1)=\sin(\Cp_{1,2}t_1-t_1)=0$ with $j=1$ or $2$. Thus we obtain that $\rho'(\Cp_{j,j}t_1)=0$, a contradiction to \eqref{eq:prfrhoStar0}.

	By continuity we then observe that either \eqref{eq:11212212} or \eqref{eq:11122221} is satisfied for all $t\in[0,\tau]$. In particular,
	\begin{equation*}\label{eq:prfrho2Star}
		\mbox{$|\rho'|>0$ \quad in $(\pi,\tau+\pi)$ \AND $\rho'(\pi)=\rho'(\tau+\pi)=0$}.
	\end{equation*}
	Therefore, we obtain from \eqref{eq:CppiStarBi} and Lemma~\ref{lem:critMapStar} that $\Cp_{1,1},\Cp_{2,2}:[0,\tau]\to[0,\tau]$, $\tilde{\Cp}_{1,1},\tilde{\Cp}_{2,2}:[\pi,\tau+\pi]\to[\pi,\tau+\pi]$, $\Cp_{2,1},\Cp_{1,2}:[0,\tau]\to[\pi,\tau+\pi]$, and $\tilde{\Cp}_{2,1},\tilde{\Cp}_{1,2}:[\pi,\tau+\pi]\to[0,\tau]$ are all monotonically increasing bijections. Here, $\tilde{\Cp}_{j,l}$ is the same mapping as  $\Cp_{j,l}$ but with the domain restricted on the interval $[\pi,\tau+\pi]$.

	Next we assume that $\rho'>0$ in $(0,\tau)$, and hence $\rho''(0)\ge 0\ge\rho''(\tau)$.
	We discuss the two different cases \eqref{eq:11212212} and \eqref{eq:11122221}, or equivalently, when $\rho'>0$ in $(\pi,\tau+\pi)$ and  when $\rho'<0$ in $(\pi,\tau+\pi)$. We show that the first case cannot occur.
	Assume otherwise that $\rho'>0$ in $(0,\tau)+\pi$, and hence $\rho''(\pi)\ge0\ge\rho''(\tau+\pi)$. In this case \eqref{eq:11212212} is satisfied  for all $t\in[0,\tau]$, and with an analogous argument one can also show that it satisfied for all $t\in[\pi,\tau+\pi]$.
	Direct calculation yields
	\begin{equation*}
		\Gp_{j_1l_1,j_2l_2}(t)= \frac{\Pare{1-(-1)^{j_2-1}\sqrt{q}}^2}{\Pare{1-(-1)^{j_1-1}\sqrt{q}}^2}~ \frac{\sqrt{q}-(-1)^{j_1-1}\Pare{1+(-1)^{j_1-1}\sqrt{q}}\rn''(\Cp_{j_1,l_1}t)}{\sqrt{q}-(-1)^{j_2-1}\Pare{1+(-1)^{j_2-1}\sqrt{q}}\rn''(\Cp_{j_2,l_2}t)},
	\end{equation*}
	for any $t$ such that $\rho'(\Cp_{j_1,l_1}t)=\rho'(\Cp_{j_2,l_2}t)=0$.
	Then
	\begin{equation}\label{eq:prfG1121>1}
		\Gp_{11,21}(\tau)=\frac{(1+\sqrt{q})^2}{(1-\sqrt{q})^2} \frac{\sqrt{q}-(1+\sqrt{q})\rn''(\tau)}{\sqrt{q}+(1-\sqrt{q})\rn''(\tau+\pi)}>1,
	\end{equation}
	and similarly $\Gp_{11,21}(\tau+\pi)>1$. Substituting $t=\tau$ and $t=\tau+\pi$ in \eqref{eq:11212212} then yields $$\phi(\tau)=\phi(\pi+\tau)=0.$$ In addition, we obtain that $\Gp_{11,21}(t)>1$ for $t\in[t_1,\tau]\cup [t_1+\pi,\tau+\pi]$ with some $t_1\in(0,\tau)$. Therefore by \eqref{eq:11212212},
	\begin{equation}\label{eq:prf11ge21}
		|\phi(\Cp_{1,1}t)|^2\ge|\phi(\Cp_{2,1}t)|^2,\qquad t\in[t_1,\tau]\cup [t_1,\tau]+\pi,
	\end{equation}
	where the equal sign holds if and only if $\phi(\Cp_{1,1}t)=\phi(\Cp_{2,1}t)=0$. Since $\phi$ is \Blu{$C^1$ and nonzero almost everywhere,} 
	there exists $t_2\in[t_1,\tau)$ such that
	\begin{equation}\label{eq:prfNormStar}
		|\phi(t_2)|=\|\phi\|_{C^0[t_1,\tau]}>0\AND |\phi(t)|<|\phi(t_2)|\quad\mbox{for all $t\in(t_2,\tau)$}.
	\end{equation}
	Let $t_0=\Cp_{2,1}^{-1}\tilde\Cp_{1,1}\tilde\Cp_{2,1}^{-1}t_2$. Since $\rho'>0$ in $(0,\tau)\cup(\pi,\tau+\pi)$, by \eqref{eq:CppiStarBi} we obtain that
	\begin{equation*}
		0<\tilde\Cp_{2,1}(t+\pi)<t<\Cp_{1,1}t<\tau<\Cp_{2,1}t<t+\pi<\tilde\Cp_{1,1}(t+\pi)<\tau+\pi,\qquad t\in(0,\tau).
	\end{equation*}
	As a consequence, $\tilde\Cp_{2,1}^{-1}t_2\in(t_2+\pi,\tau+\pi)$, $\tilde{\Cp}_{1,1}\tilde\Cp_{2,1}^{-1}t_2-\pi\in(t_2,\tau)$,  $t_0=\Cp_{2,1}^{-1}\tilde{\Cp}_{1,1}\tilde\Cp_{2,1}^{-1}t_2\in(t_2,\tau)$, and $\Cp_{1,1} t_0\in(t_2,\tau)$.
	Applying \eqref{eq:prf11ge21} with $t=t_0\in(t_2,\tau)$ and $t=\tilde\Cp_{2,1}^{-1}t_2\in(t_2,\tau)+\pi$ yields
	\begin{equation*}
		|\phi(\Cp_{1,1}t_0)|^2\ge|\phi(\Cp_{2,1}t_0)|^2=|\phi(\tilde{\Cp}_{1,1}\tilde\Cp_{2,1}^{-1}t_2)|^2\ge \phi(t_2)|^2,
	\end{equation*}
	which contradicts \eqref{eq:prfNormStar}.

	Up to now, we have proven that if
	\begin{equation}\label{eq:prfrho>0}
		\mbox{$\rho'>0$ \quad in $(0,\tau)$ \AND $\rho'(0)=\rho'(\tau)=0$},
	\end{equation}
	then
	\begin{equation}\label{eq:prfrho<0}
		\mbox{$\rho'<0$ \quad in $(0,\tau)+\pi$ \AND $\rho'(\pi)=\rho'(\tau+\pi)=0$},
	\end{equation}
	and \eqref{eq:11122221} is satisfied for all $t\in[0,\tau]$.
	Notice that
	\begin{equation*}
		\Gp_{11,12}(0)=\frac{\sqrt{q}-(1+\sqrt{q})\rn''(0)}{\sqrt{q}-(1+\sqrt{q})\rn''(\pi)}\le1\le \frac{1}{\Gp_{11,12}(0)}=\Gp_{11,12}(\pi),
	\end{equation*}
	where in the inequality we have made use of the fact that $\rho''(\pi)\le 0\le\rho''(0)$.
	Suppose that $|\rho''(0)|+|\rho''(\pi)|\neq 0$, and thus $\rho''(0)>\rho''(\pi)$. Then $\Gp_{11,12}(0)<1$ and hence by \eqref{eq:11122221} that $\phi(0)=0$. In addition, $\Gp_{11,12}(t)>1$ for $t\in[-t_1,t_1]$ with some $t_1\in(0,\tau)$. Similar as before, we can find $t_2\in(0,t_1]\subset(0,\tau)$ such that
	\begin{equation*}
		|\phi(t_2)|=\|\phi\|_{C^0[0,t_1]}>0\AND |\phi(t)|<|\phi(t_2)|\quad\mbox{for all $t\in(0,t_2)$}.
	\end{equation*}
	Since $\rho'>0$ in $(0,\tau)$ and $\rho'<0$ in $(\pi,\tau+\pi)$, \eqref{eq:CppiStarBi} implies that
	\begin{equation*}
		0<\Cp_{1,2}t-\pi<t<\Cp_{1,1}t<\tau,\qquad t\in(0,\tau),
	\end{equation*}
	and hence $0<\Cp_{1,2}\Cp_{1,1}^{-1}t_2-\pi<\Cp_{1,1}^{-1}t_2<t_2$. We apply \eqref{eq:11122221} for $t=\Cp_{1,1}^{-1}t_2\in(0,t_2)$ and obtain
	\begin{equation*}
		|\phi(t_2)|^2<|\phi(\Cp_{1,2}\Cp_{1,1}^{-1}t_2-\pi)|^2<|\phi(t_2)|^2,
	\end{equation*}
	which is a contradiction. Therefore, $\rho$ must satisfy $\rho''(0)=\rho''(\pi)=0$. With an analogous argument, by considering $\Gp_{11,12}(\tau)$, one can also prove that $\rho''(\tau)=\rho''(\pi+\tau)=0$.

	Similarly, it can be shown that \eqref{eq:prfrho<0} implies \eqref{eq:prfrho>0}. We choose to omit the details. Furthermore, for any $t$ such that $\rho'(t)=0$ we must have $\rho'(t+\pi)=0$; Otherwise $\rho'(t+\pi)\neq0$ would imply $\rho'(t)\neq 0$ as shown before. So far, we have proven that $\sgn\rho'(t)=-\sgn\rho'(t+\pi)$ for all $t$, and that, if $t$ is a zero of $\rho'$ such that $|\rho'|>0$ in a neighborhood of $t_+$ or of $t_-$, then $\rho'(t)=\rho'(t+\pi)=\rho''(t)=\rho''(t+\pi)=0$. It is evident that if $\rho'=0$ in some interval $(t_1,t_2)$, then $\rho'=\rho''=0$ in $[t_1,t_2]\cup[t_1,t_2]+\pi$. We are left to show $\rho'(t_0)=0$ implies $\rho''(t_0)=\rho''(t_0+\pi)=0$ when $t_0$ is an accumulated zero of $\rho'$ with $\rho'$ not identically zero in a neighborhood of $t_{0,+}$. In this case there exists a sequence of intervals $\{(t_{m,1},t_{m_2})\}$ satisfying $\rho'\neq 0$ in $(t_{m,1},t_{m_2})$, $\rho'(t_{m,1})=\rho'(t_{m,2})=0$, $t_{m_1}>t_0$, and $t_{m_1}\to t_0$ as $m\to\infty$. Then, as proven before, $\rho''(t_{m,1})=\rho''(t_{m,1}+\pi)=0$, and hence by continuity that $\rho''(t_0)=\rho''(t_0+\pi)=0$.

	The proof is complete.
\end{proof}

\subsubsection{Proof of Theorem~\ref{thm:main1Star}}
\begin{prop}
	Let $\phi\in C^1(\Ss^1)$, and let $\rho$ be the radius function of an admissible $C^2$ star-shaped domain.
	Suppose that there exists $\N=\cup_\iota(s_{\iota,1},s_{\iota,2})$, a union of countable (finite or infinite) disjoint open intervals of $(0,2\pi)+\theta_0$ for some $\theta_0\in(0,2\pi)$, such that, as subsets of the quotient space $\RR/(2\pi\mathbb{Z})$, $\N$ and $\N+\pi$ 
	satisfy $\overline{\N\cup(\N+\pi)}=\RR/(2\pi\mathbb{Z})$. Moreover, for each $\iota$,
	\begin{equation*}
		\mbox{$|\rho'|>0$ in $(s_{\iota,1},s_{\iota,2})$,\qquad $\rho'(s_{\iota,1})=\rho'(s_{\iota,2})=\rho'(s_{\iota,1}+\pi)=\rho'(s_{\iota,2}+\pi)=0$,}
	\end{equation*}
	and one of the followings is satisfied:
	\begin{enumerate}[(i)]
		\item $\sgn\rho'(t+\pi)=\sgn\rho'(t)$ for $t\in(s_{\iota,1},s_{\iota,2})$.
		\item $\rho''=0$ in either $(s_{\iota,1}+\pi,s_{\iota,1}+\pi+\epsilon)$ or $(s_{\iota,2}+\pi-\epsilon,s_{\iota,2}+\pi)$ for some $\epsilon>0$.
		\item $|\rho''(s_{\iota,j})|+|\rho''(s_{\iota,j}+\pi)|> 0$, for some $j=1,2$.
	\end{enumerate}

	If there exist a sequence of wavenumbers $k$'s tending to infinity such that $\Int_k(t)=0$ for all $t$, where $\Int_k$ is defined in \eqref{eq:IntFinal}, then $\phi$ must be identically zero.
\end{prop}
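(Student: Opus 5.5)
The plan is to reuse the machinery of the proof of Proposition~\ref{prop:thm:StarAnly}, but localized to each interval $(s_{\iota,1},s_{\iota,2})$ and its antipode. The goal is to show $\phi=0$ on $\Cp_{j,l}$-images of $\N\cup(\N+\pi)$, and then conclude by density and continuity. Fix $\iota$ and write $(s_1,s_2)=(s_{\iota,1},s_{\iota,2})$. Since $\rho'$ vanishes at $s_1,s_2,s_1+\pi,s_2+\pi$, the relations \eqref{eq:rho'=0}, \eqref{eq:CppiStarBi} and Lemma~\ref{lem:critMapStar} give that $\Cp_{1,1},\Cp_{2,2}$ map $[s_1,s_2]$ onto itself, and $\Cp_{1,2},\Cp_{2,1}$ map $[s_1,s_2]$ onto $[s_1,s_2]+\pi$, all as increasing bijections. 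Lemma~\ref{lem:crimoreStarSml} gives $\sgn f_{1,1}=-\sgn f_{2,2}\neq0$ on $(s_1,s_2)$. The sign of $f_{1,2}f_{2,1}$ is governed by the sign of $\rho'$ on $(s_1,s_2)+\pi$, via \eqref{eq:CppiStarBi} and Lemma~\ref{lem:crimoreStarSml} applied at $l=2$. For each $t$ we apply Lemma~\ref{lem:Vander} to the system obtained from \eqref{eq:IntNasympStar} with $N=0,\dots,3$, whose unknowns are \eqref{eq:VandUnknown}.

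\textbf{Case (i).} Here $\rho'(\cdot+\pi)$ has the same sign as $\rho'$, so $f_{1,2}$ and $f_{2,1}$ carry the same sign as $f_{1,1}$ and $f_{2,2}$ respectively (or the reverse), and the hypothesis $\sgn f_{1,2}f_{2,1}=-1$ of Lemma~\ref{lem:Vander} is violated in the good direction. I expect that $f_{1,1}\ne f_{2,1}$ or $f_{1,2}\ne f_{2,2}$ away from a closed null set. Wherever all four values are distinct, Lemma~\ref{lem:Vander} in its general Vandermonde form gives all $z_{j,l}=0$. Where two values coincide, we only get relations of the form \eqref{eq:11212212} with $\Gp$-weights. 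In that situation I would rerun the maximum-principle contradiction argument of Proposition~\ref{prop:thm:StarAnly} built around $\Gp_{11,21}>1$, as in \eqref{eq:prfG1121>1}, \eqref{eq:prf11ge21} and \eqref{eq:prfNormStar}. The earlier proof only used $\phi\neq0$ a.e.\ to find a strict local maximum; here I instead assume $\phi\not\equiv0$ on the relevant arc and pick $t_2$ as a point of maximal $|\phi|$, so the chain of inequalities forces $\phi\equiv0$ on $[s_1,s_2]\cup([s_1,s_2]+\pi)$.

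\textbf{Cases (ii) and (iii).} These correspond to opposite signs, where \eqref{eq:11122221} holds and we only obtain the weighted identities $|\phi(\Cp_{1,1}t)|^2=\Gp_{11,12}(t)|\phi(\Cp_{1,2}t-\pi)|^2$ and their analogues. In case (iii) the endpoint weight satisfies $\Gp_{11,12}(s_j)\neq1$, strictly on one side. Iterating the map $t\mapsto\Cp_{1,2}\Cp_{1,1}^{-1}t-\pi$, which is monotone toward the endpoint, together with the strict weight inequality near the endpoint, gives the same contradiction as in the second half of Proposition~\ref{prop:thm:StarAnly} unless $\phi\equiv0$ near $s_j$. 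Propagating the identities along the whole interval, using the monotone conjugated maps and the fact that the weights are nonzero, then spreads the vanishing to all of $[s_1,s_2]$ and its antipode. In case (ii), $\rho''=0$ on a one-sided neighbourhood of $s_j+\pi$ together with $\rho'(s_j+\pi)=0$ forces $\rho'\equiv0$ on an interval adjacent to that endpoint inside $(s_1,s_2)+\pi$. This contradicts $|\rho'|>0$ unless that interval lies outside $(s_1,s_2)+\pi$. In that situation $\Cp_{j,j}$ is the identity on the flat piece, and the stationary points degenerate in a way that forces a Vandermonde system with distinct nodes adjacent to the interval. From this I would extract $\phi=0$ on the flat arc, and hence, by the weighted identities, on the interval.

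Finally, since $\overline{\N\cup(\N+\pi)}$ is the whole circle, the continuity of $\phi$ gives $\phi\equiv0$. The main obstacle is case (ii): making rigorous how local flatness near $s_j+\pi$ yields vanishing of $\phi$, and how that vanishing propagates through the maps $\Cp_{j,l}$. I would first try to mimic the argument behind \cite[Theorem 4]{VogXia25}, locally.
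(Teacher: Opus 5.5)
Your Case (i) is essentially the paper's argument, but it needs two repairs.

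\begin{itemize}
\item Nothing in Lemma~\ref{lem:Vander} is ``violated''. Lemma~\ref{lem:crimoreStarSml} always gives $\sgn f_{1,2}=-\sgn f_{2,1}$. Case (i) simply makes $f_{1,1},f_{2,1}$ share one sign and $f_{1,2},f_{2,2}$ the other. So the weighted identity in \eqref{eq:1121} holds for every $t$, whether or not $\phi$ vanishes there.
\item Since $\Gp_{11,21}>1$ only near $s_2$, your maximum argument kills $\phi$ only on some $(t_1,s_2)$. You must then sweep this across the interval with $\tilde{\Cp}_{2,1}\tilde{\Cp}_{1,1}^{-1}\Cp_{2,1}\Cp_{1,1}^{-1}$, which is monotone with fixed points only at the endpoints.
\end{itemize}

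The genuine gap is in Cases (ii) and (iii), and it comes from misreading the hypotheses. $|\rho'|>0$ is assumed only on $(s_1,s_2)$, not on $(s_1,s_2)+\pi$, which need not belong to $\N$. Case (ii) is \emph{precisely} the situation $\rho'\equiv0$ on $(s_1+\pi,s_1+\pi+\epsilon)$ or on $(s_2+\pi-\epsilon,s_2+\pi)$, inside the antipodal interval. So your ``contradiction'' there is not one, and the configuration you treat instead is not the hypothesis.

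For the same reason, (ii)/(iii) do not mean ``opposite signs''. $\rho'(\cdot+\pi)$ may change sign, vanish on subintervals, or have zeros accumulating at $s_j+\pi$. Then neither \eqref{eq:11212212} nor \eqref{eq:11122221} need hold on any one-sided neighbourhood of $s_j$. Your strict-weight argument with $\Gp_{11,12}$ therefore covers only the subcase of constant opposite sign near $s_j$. Your propagation step (monotone maps plus nonzero weights) also has nothing to work with on the set where $\rho'(\cdot+\pi)=0$.

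The paper supplies three ingredients that your proposal is missing.

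(a) On $\inT_0=\{t\in[s_1,s_2]:\rho'(t+\pi)=0\}$ one has $f_{1,2}=f_{2,1}=0$. The branch $a_{2,1}=0$ of Lemma~\ref{lem:Vander} then forces $\phi=0$ on $\Cp_{1,1}\inT_0\cup\Cp_{2,2}\inT_0$ and on its antipode.

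(b) Argue by contradiction, say with $\rho'>0$ on $(s_1,s_2)$. Take an interval on which $\phi\neq0$ and push it toward $s_1$ by $\Cp_{2,1}\Cp_{1,1}^{-1}-\pi$ or $\Cp_{1,2}\Cp_{1,1}^{-1}-\pi$, chosen according to the local sign of $\rho'(\cdot+\pi)$. Each image (or its antipode) stays nonvanishing, so by (a) its $\Cp_{1,1}^{-1}$-preimage avoids $\inT_0$. Hence $\inT_1=[s_1,s_2]\setminus\inT_0$ accumulates at $s_1$, which contradicts Case (ii) at once. Use $\Cp_{2,2}$ in place of $\Cp_{1,1}$ to reach $s_2$.

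(c) Case (iii) uses no weight inequality at all. On the nonvanishing intervals accumulating at $s_j$, one of the node coincidences $f_{1,1}=f_{2,1}$ or $f_{1,1}=f_{1,2}$ holds, and one of $f_{1,2}=f_{2,2}$ or $f_{2,1}=f_{2,2}$ holds. So the corresponding $t$-derivatives agree at $s_j$. Expressing $\Cp_{j,l}'(s_j)$ through $\rn''(s_j)$ or $\rn''(s_j+\pi)$ via \eqref{eq:Sjl'} turns these into algebraic relations. Together with the sign of $\rho''(s_j)$ and the admissibility bounds, these relations are incompatible unless $\rho''(s_j)=\rho''(s_j+\pi)=0$.
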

\begin{proof}
	We assume without loss of generality that $(0,s_0)$ is an interval from $\N$ such that
	\begin{equation}\label{eq:prfrho'>0}
		\mbox{$|\rho'|>0$ \quad in $(0,s_0)$ \AND $\rho'(0)=\rho'(s_0)=\rho'(\pi)=\rho'(s_0+\pi)=0$.}
	\end{equation}
	Then \eqref{eq:CppiStarBi} and Lemma~\ref{lem:critMapStar} imply that $\Cp_{1,1},\Cp_{2,2}:[0,s_0]\to[0,s_0]$, $\tilde{\Cp}_{1,1},\tilde{\Cp}_{2,2}:[0,s_0]+\pi\to[0,s_0]+\pi$, $\Cp_{2,1},\Cp_{1,2}:[0,s_0]\to[0,s_0]+\pi$, and $\tilde{\Cp}_{2,1},\tilde{\Cp}_{1,2}:[0,s_0]+\pi\to[0,s_0]$ are all monotonically increasing bijections. Here,
	\begin{equation}\label{eq:tTjl}
		\tilde{\Cp}_{j,l}(t+\pi):=\Cp_{j,l+1}t,\qquad t\in[0,s_0],\quad j,l=1,2.
	\end{equation}
	In other words, recalling \eqref{eq:CppiStarBi}, $\tilde{\Cp}_{j,l}$ is nothing but $\Cp_{j,l}$ with its domain restricted on the interval $[0,s_0]+\pi$.

	Note that it suffices to conclude the proof by showing
	\begin{equation*}
		\mbox{$\phi=0$\qquad in\quad $(0,s_0)\cup(\pi,s_0+\pi)$,}
	\end{equation*}
	since the interval $(0,s_0)$ can be replaced with any $(s_{\iota,1},s_{\iota,2})$ from $\N$ satisfying \eqref{eq:prfrho'>0}.
	\subsubsection*{Case (i)}
	Assume that $\rho'>0$ in $(0,s_0)\cup(\pi,s_0+\pi)$. Then by \eqref{eq:CppiStarBi} we have
	\begin{equation}\label{eq:Cp22112112}
		0<\Cp_{2,2}t<t<\Cp_{1,1}t<s_0\AND 0<\Cp_{2,1}t-\pi<t<\Cp_{1,2}t-\pi<s_0,\qquad t\in(0,s_0).
	\end{equation}

	We first show that $\phi(s_0)=0$. Otherwise $\phi$ is nowhere zero near $s_0$, and hence so is $\phi\circ\Cp_{1,1}$. From Lemma~\ref{lem:Vander} we obtain that
	\begin{equation}\label{eq:1121}
		\rho(\Cp_{1,1}t) \sin(\Cp_{1,1}t-t)=\rho(\Cp_{2,1}t) \sin(\Cp_{2,1}t-t)
		,\quad
		|\phi(\Cp_{1,1}t)|^2=\Gp_{11,21}(t)|\phi(\Cp_{2,1}t)|^2,
	\end{equation}
	is satisfied in $(s_0-\epsilon,s_0]$ for some $\epsilon>0$. Taking $t=s_0$ in \eqref{eq:1121} we observe that $\phi(s_0+\pi)\neq0$. Thus \eqref{eq:1121} is also satisfied in $(s_0-\epsilon,s_0]+\pi$, with a possible smaller value of $\epsilon$. Similar to \eqref{eq:prfG1121>1} we deduce that $\Gp_{11,21}(s_0)>1$ and $\Gp_{11,21}(s_0+\pi)>1$. Substituting $t=s_0$ and $t=s_0+\pi$ in \eqref{eq:1121} then yields $\phi(s_0)=\phi(s_0+\pi)=0$, a contradiction.

	Notice that $\Gp_{11,21}(t)>1$ for $t\in[t_1,s_0]\cup [t_1+\pi,s_0+\pi]$ with some $t_1\in(0,s_0)$, and the second identity in \eqref{eq:1121} is valid whether $\phi(\Cp_{1,1}t)$ is zero or not. Therefore,
	\begin{equation*}
		|\phi(\Cp_{1,1}t)|^2\ge|\phi(\Cp_{2,1}t)|^2,\qquad t\in[t_1,s_0]\cup [t_1,s_0]+\pi.
	\end{equation*}
	Suppose that $\phi$ is not identically zero in $(t_1,s_0)$. Then there exists $t_2\in[t_1,s_0)$ such that
	\begin{equation*}
		|\phi(t_2)|=\|\phi\|_{C^0[t_1,s_0]}>0\AND |\phi(t)|<|\phi(t_2)|\quad\mbox{for all $t\in(t_2,s_0)$}.
	\end{equation*}
	Applying the same arguments as in the proof of Proposition~\ref{prop:thm:StarAnly}, we can derive a contradiction to the above statement. Therefore, we conclude that $\phi\equiv0$ in $(t_1,s_0)$. Applying the second identity in \eqref{eq:1121} yields $\phi\equiv0$ in $\Cp_{2,1}\Cp_{1,1}^{-1}(t_1,s_0)$. Repeating this procedure once again we have $\phi\equiv0$ in $\mathcal{U}(t_1,s_0)$, where $\mathcal{U}:=\tilde\Cp_{2,1}\tilde\Cp_{1,1}^{-1}\Cp_{2,1}\Cp_{1,1}^{-1}$. By induction we obtain that $\phi\equiv0$ in $\mathcal{U}^m(t_1,s_0)$ for $m\in\mathbb{N}$. Notice from \eqref{eq:Cp22112112} that $\mathcal{U}$ is a monotonically decreasing bijection onto $[0,s_0]$, with the only fix points being $0$ and $s_0$. Therefore, $\mathcal{U}^mt_1\to 0$ as $m\to\infty$ and thus $\phi\equiv0$ in $[0,s_0]$.

	The case when $\rho'<0$ in $(0,s_0)\cup(\pi,s_0+\pi)$ can be prove analogously by conduction the arguments near $0$ and $\pi$. We opt to omit the details.

	\subsubsection*{Cases (ii) and (iii)}
	Let $\inT_1:=\{t\in(0,s_0):\rho'(t+\pi)\neq 0\}$ and $\inT_0:=[0,s_0]\backslash\inT_1$. Then $\tilde{\Cp}_{1,1},\tilde{\Cp}_{2,2}:\inT_0+\pi\to\inT_0+\pi$ and $\Cp_{1,2}-\pi,\Cp_{2,1}-\pi:\inT_0\to\inT_0$ are the identity operators. Same as in the proof of Proposition~\ref{prop:thm:StarAnly}, from \eqref{eq:IntNasympStar} we can construct a $4\times 4$ linear homogeneous algebraic system with ``unknowns'' as in \eqref{eq:VandUnknown} and a Vandermonde coefficient matrix generated from $f_{j,l}(t)$, $j,l=1,2$, where $f_{j,l}$ is defined in \eqref{eq:fjl}.
	Notice that
	\begin{equation*}
		\begin{split}
			&f_{1,1}=f_{2,2}=0\quad \text{in } \inT_0+\pi,
			\qquad f_{1,2}=f_{2,1}=0\quad \text{in }  \inT_0,
			\\&\sgn f_{1,1}=-\sgn f_{2,2}\neq0\quad \text{in } (0,s_0)\cup(\inT_1+\pi),
			\\& \sgn f_{1,2}=-\sgn f_{2,1}\neq0\quad \text{in }  \inT_1\cup((0,s_0)+\pi).
		\end{split}
	\end{equation*}
	We then deduce from Lemma~\ref{lem:Vander} that,
	\begin{equation*}
		\begin{split}
			&\phi(\Cp_{1,1}t)=\phi(\Cp_{2,2}t+\pi)=0\AnD
			|\phi(\Cp_{1,2}t-\pi)|^2=\Gp_{12,21}(t)|\phi(\Cp_{2,1}t)|^2,\qquad t\in \inT_0,
			\\&\phi(\tilde\Cp_{1,2}t+\pi)=\phi(\tilde\Cp_{2,1}t)=0\AnD
			|\phi(\tilde\Cp_{1,1}t)|^2=\Gp_{11,22}(t)|\phi(\tilde\Cp_{2,2}t-\pi)|^2,\qquad t\in \inT_0+\pi.
		\end{split}
	\end{equation*}
	Thus, recalling \eqref{eq:tTjl}, we obtain
	\begin{equation*}
		\phi(\Cp_{1,1}t)=\phi(\Cp_{1,1}t+\pi)=\phi(\Cp_{2,2}t)=\phi(\Cp_{2,2}t+\pi)=0,\qquad t\in \inT_0.
	\end{equation*}
	In other words,
	\begin{equation}\label{eq:prfphi1122=0}
		\phi=0\qquad\mbox{in  $\Cp_{1,1}\inT_0\cup\Cp_{2,2}\inT_0\cup\pare{(\Cp_{1,1}\inT_0\cup\Cp_{2,2}\inT_0)+\pi}$}.
	\end{equation}

	Define
	\begin{equation*}
		\begin{split}
		&	\mbox{$\inT_{1,+}:=\{t\in(0,s_0):\sgn\rho'(t+\pi)=\sgn\rho'(t)\neq 0\}\subset \inT_1$, \qquad and,}
			\\&\mbox{$\inT_{1,-}:=\{t\in(0,s_0):\sgn\rho'(t+\pi)=-\sgn\rho'(t)\neq 0\}=\inT_1\backslash\inT_{1,+}$.}
		\end{split}
	\end{equation*}
	Then by \eqref{eq:CppiStarBi},
	\begin{equation*}
		\begin{split}
			&\sgn f_{1,1}=\sgn f_{2,1}=-\sgn f_{1,2}=-\sgn f_{2,2}\neq0\quad \text{in } \inT_{1,+}\cup(\inT_{1,+}+\pi),
			\\& \sgn f_{1,1}=\sgn f_{1,2}=-\sgn f_{2,1}=-\sgn f_{2,2}\neq0\quad \text{in }  \inT_{1,-}\cup(\inT_{1,-}+\pi).
		\end{split}
	\end{equation*}
	Therefore, for each $t\in\inT_{1,+}\cup(\inT_{1,+}+\pi)$, we obtain from Lemma~\ref{lem:Vander} that
	\begin{itemize}
		\item $\phi(\Cp_{1,1}t)$ and $\phi(\Cp_{2,1}t)$ are either both zero or both nonzero, and in the \eqref{eq:1121} is valid.
		\item $\phi(\Cp_{2,2}t+\pi)$ and $\phi(\Cp_{1,2}t-\pi)\neq0$ are either both zero or both nonzero, and in the latter case,
		\begin{equation*}
			\rho(\Cp_{2,2}t) \sin(\Cp_{2,2}t-t)=\rho(\Cp_{1,2}t) \sin(\Cp_{1,2}t-t)
			,\quad
			|\phi(\Cp_{2,2}t+\pi)|^2=\Gp_{22,12}(t)|\phi(\Cp_{1,2}t-\pi)|^2.
		\end{equation*}
	\end{itemize}
	Moreover, recalling \eqref{eq:tTjl}, we can reformulate this statement in the set $\inT_{1,+}+\pi$ to one in $\inT_{1,+}$; that is, for each $t\in\inT_{1,+}$,
	\begin{itemize}
		\item $\phi(\Cp_{1,2}t)$ and $\phi(\Cp_{2,2}t)$ are either both zero or both nonzero, and in the latter case we have
		\begin{equation}\label{eq:1222}
			\rho(\Cp_{1,2}t) \sin(\Cp_{1,2}t-t)=\rho(\Cp_{2,2}t) \sin(\Cp_{2,2}t-t)
			,\quad
			|\phi(\Cp_{1,2}t)|^2=\Gp_{12,22}(t)|\phi(\Cp_{2,2}t)|^2;
		\end{equation}
		\item $\phi(\Cp_{2,1}t-\pi)$ and $\phi(\Cp_{1,1}t+\pi)\neq0$ are either both zero or both nonzero, and in the latter case,
		\begin{equation}\label{eq:2111}
			\rho(\Cp_{2,1}t) \sin(\Cp_{2,1}t-t)=\rho(\Cp_{1,1}t) \sin(\Cp_{1,1}t-t)
			,\quad
			|\phi(\Cp_{2,1}t-\pi)|^2=\Gp_{21,11}(t)|\phi(\Cp_{1,1}t+\pi)|^2.
		\end{equation}
	\end{itemize}
	In particular,
	\begin{equation}\label{eq:phijl>0}
		\begin{split}
		&|\phi(\Cp_{1,1}t)|^2=\Gp_{11,21}(t)|\phi(\Cp_{2,1}t)|^2,\quad
		|\phi(\Cp_{2,2}t+\pi)|^2=\Gp_{22,12}(t)|\phi(\Cp_{1,2}t-\pi)|^2,
		\\&|\phi(\Cp_{1,2}t)|^2=\Gp_{12,22}(t)|\phi(\Cp_{2,2}t)|^2,\quad
		|\phi(\Cp_{2,1}t-\pi)|^2=\Gp_{21,11}(t)|\phi(\Cp_{1,1}t+\pi)|^2,
		\end{split}
		\qquad t\in\inT_{1,+}.
	\end{equation}
	Analogously, one can also show
	for each $t\in\inT_{1,-}$ that
	\begin{itemize}
		\item $\phi(\Cp_{1,1}t)$ and $\phi(\Cp_{1,2}t-\pi)$ are both or neither zero, and in the latter case,
		\begin{equation}\label{eq:1112}
			\rho(\Cp_{1,1}t) \sin(\Cp_{1,1}t-t)=\rho(\Cp_{1,2}t) \sin(\Cp_{1,2}t-t)
			,\quad
			|\phi(\Cp_{1,1}t)|^2=\Gp_{11,12}(t)|\phi(\Cp_{1,2}t-\pi)|^2;
		\end{equation}
		\item $\phi(\Cp_{2,2}t+\pi)$ and $\phi(\Cp_{2,1}t)$ are both or neither zero, and in the latter case,
		\begin{equation}\label{eq:2221}
			\rho(\Cp_{2,2}t) \sin(\Cp_{2,2}t-t)=\rho(\Cp_{2,1}t) \sin(\Cp_{2,1}t-t)
			,\quad
			|\phi(\Cp_{2,2}t+\pi)|^2=\Gp_{22,21}(t)|\phi(\Cp_{2,1}t)|^2;
		\end{equation}
		\item $\phi(\Cp_{1,2}t)$ and $\phi(\Cp_{1,1}t+\pi)$ are both or neither zero, and in the latter case,
		\begin{equation}\label{eq:1211}
			\rho(\Cp_{1,2}t) \sin(\Cp_{1,2}t-t)=\rho(\Cp_{1,1}t) \sin(\Cp_{1,1}t-t)
			,\quad
			|\phi(\Cp_{1,2}t)|^2=\Gp_{12,11}(t)|\phi(\Cp_{1,1}t+\pi)|^2;
		\end{equation}
		\item $\phi(\Cp_{2,1}t-\pi)$ and $\phi(\Cp_{2,2}t)$ are both or neither zero, and in the latter case,
		\begin{equation}\label{eq:2122}
			\rho(\Cp_{2,1}t) \sin(\Cp_{2,1}t-t)=\rho(\Cp_{2,2}t) \sin(\Cp_{2,2}t-t)
			,\quad
			|\phi(\Cp_{2,1}t-\pi)|^2=\Gp_{21,22}(t)|\phi(\Cp_{2,2}t)|^2;
		\end{equation}
	\end{itemize}
	and in particular,
	\begin{equation}\label{eq:phijl<0}
		\begin{split}
			&|\phi(\Cp_{1,1}t)|^2=\Gp_{11,12}(t)|\phi(\Cp_{1,2}t-\pi)|^2,\quad
			|\phi(\Cp_{2,2}t+\pi)|^2=\Gp_{22,21}(t)|\phi(\Cp_{2,1}t)|^2,
			\\&|\phi(\Cp_{1,2}t)|^2=\Gp_{12,11}(t)|\phi(\Cp_{1,1}t+\pi)|^2,\quad
			|\phi(\Cp_{2,1}t-\pi)|^2=\Gp_{21,22}(t)|\phi(\Cp_{2,2}t)|^2.
		\end{split}
	\end{equation}

	If $\inT_0=[0,s_0]$, then we readily obtain from \eqref{eq:prfphi1122=0} that $\phi=0$ in $(0,s_0)\cup(\pi,s_0+\pi)$. We assume $\inT_0\neq[0,s_0]$ in the rest, namely, $\inT_1\neq\emptyset$.
	We also assume that
	\begin{equation}\label{eq:rho'>0}
		\rho'>0\qquad\mbox{in $(0,s_0)$}.
	\end{equation}
	Then by \eqref{eq:CppiStarBi} we have
	\begin{equation}\label{eq:T22<11}
		0<\Cp_{2,2}t<t<\Cp_{1,1}t<s_0,\qquad t\in(0,s_0).
	\end{equation}
	{In the case when $\rho'<0$ in $(0,s_0)$, all the arguments in the rest of the proof can be proceeded analogously by switching the roles of $\Cp_{1,1}$ and $\Cp_{2,1}$ with the roles of $\Cp_{2,2}$ and $\Cp_{1,2}$, respectively.}

	Assume $\phi\nid0$ in $(0,s_0)$. Then
	\begin{equation*}
		\mbox{$\phi(t)\neq 0$ for $t\in[t_1,t_2]$ with $0\le t_1<t_2\le s_0$}
	\end{equation*}
	that is, $\phi(\Cp_{1,1}t)\neq 0$ for $t\in\Cp_{1,1}^{-1}[t_1,t_2]\subset(0,s_0)$. We first observe from \eqref{eq:prfphi1122=0} that $\Cp_{1,1}^{-1}[t_1,t_2]\subset\inT_1$. Moreover, by the definition of $\inT_1$, we must have
	\begin{equation}\label{eq:proofI+-}
		\mbox{either\quad $\Cp_{1,1}^{-1}[t_1,t_2]\subset\inT_{1,+}$,\qquad or\quad $\Cp_{1,1}^{-1}[t_1,t_2]\subset\inT_{1,-}$.}
	\end{equation}
	Suppose that $\Cp_{1,1}^{-1}[t_1,t_2]\subset(\tau_1,\tau_2)\subset\inT_{1,+}$ with
	\begin{equation*}
		\mbox{$\rho'>0$\quad in $(\tau_1,\tau_2)+\pi$\AND $\rho'(\tau_1+\pi)=\rho'(\tau_2+\pi)=0$}.
	\end{equation*}
	Then by \eqref{eq:CppiStarBi},
	\begin{equation}\label{eq:T21<12}
		\tau_1<\Cp_{2,1}t-\pi<t<\Cp_{1,2}t-\pi<\tau_2,\qquad t\in(\tau_1,\tau_2).
	\end{equation}
	Since $\phi(\Cp_{1,1}t)\neq 0$ for $t\in\Cp_{1,1}^{-1}[t_1,t_2]\subset\inT_{1,+}$, we deduce 
	from \eqref{eq:phijl>0} that
	\begin{equation*}
		\mbox{$\phi$ is nowhere zero in $\CP_1[t_1,t_2]+\pi$,}
	\end{equation*}
	where $\CP_1:=\Cp_{2,1}\Cp_{1,1}^{-1}-\pi$. Moreover,

	Thus $\phi\circ(\Cp_{1,1}+\pi)$ is nowhere zero in $\Cp_{1,1}^{-1}\CP_1[t_1,t_2]$. By \eqref{eq:prfphi1122=0} again we must have $\CP_1[t_1,t_2]\cap\inT_0=\emptyset$ and hence
	\begin{equation*}
		\mbox{either\quad $\Cp_{1,1}^{-1}\CP_1[t_1,t_2]\subset\inT_{1,+}$,\qquad or\quad $\Cp_{1,1}^{-1}\CP_1[t_1,t_2]\subset\inT_{1,-}$.}
	\end{equation*}
	If $\Cp_{1,1}^{-1}[t_1,t_2]\subset(\tau_1,\tau_2)\subset\inT_{1,-}$ with
	\begin{equation*}
		\mbox{$\rho'<0$\quad in $(\tau_1,\tau_2)+\pi$\AND $\rho'(\tau_1+\pi)=\rho'(\tau_2+\pi)=0$}.
	\end{equation*}
	Then by \eqref{eq:CppiStarBi},
	\begin{equation}\label{eq:T21>12}
		\tau_1<\Cp_{1,2}t-\pi<t<\Cp_{2,1}t-\pi<\tau_2,\qquad t\in(\tau_1,\tau_2).
	\end{equation}
	Since $\phi\circ\Cp_{1,1}$ is nowhere zero in $\Cp_{1,1}^{-1}[t_1,t_2]$, we deduce from \eqref{eq:phijl<0} that \begin{equation*}
		\mbox{$\phi$ is nowhere zero in $\CP_2[t_1,t_2]$,}
	\end{equation*}
	where $\CP_2:=\Cp_{1,2}\Cp_{1,1}^{-1}-\pi$. Thus $\phi\circ\Cp_{1,1}$ is nowhere zero in $\Cp_{1,1}^{-1}\CP_2[t_1,t_2]$. By \eqref{eq:prfphi1122=0} again we must have $\CP_2[t_1,t_2]\cap\inT_0=\emptyset$ and hence
	\begin{equation*}
		\mbox{either\quad $\Cp_{1,1}^{-1}\CP_2[t_1,t_2]\subset\inT_{1,+}$,\qquad or\quad $\Cp_{1,1}^{-1}\CP_2[t_1,t_2]\subset\inT_{1,-}$.}
	\end{equation*}

	Denote $\kappa_1=0$ and $[t_{1,1},t_{1,2}]:=[t_1,t_2]$. With an inductive argument we can show there exist a sequence of binary indices $\{\iota_m:m\in\NN_+\}$ and $\{\kappa_m:m\in\NN_+\}$, and a sequence of intervals $\{[t_{m,1},t_{m,2}]\subset(0,s_0):m\in\NN_+\}$ with $t_{m+1,1}<t_{m+1,2}$, such that for every $m\in\NN_+$,
	\begin{equation}\label{eq:prfphineq0}
		\mbox{$\phi$ is nowhere zero in $[t_{m,1},t_{m,2}]+\delta_{\kappa_m,1}\pi$},
	\end{equation}
	and
	\begin{equation*}
		\mbox{either\quad $\Cp_{1,1}^{-1}[t_{m,1},t_{m,2}]\subset\inT_{1,+}$ and $\iota_{m}=1$,\qquad
		or\quad $\Cp_{1,1}^{-1}[t_{m,1},t_{m,2}]\subset\inT_{1,-}$ and $\iota_{m}=2$,}
	\end{equation*}
	with
	\begin{equation*}
		[t_{m+1,1},t_{m+1,2}]:=\CP_{\iota_m}[t_{m,1},t_{m,2}]
		\AND \kappa_{m+1}=\kappa_m+\iota_m,\qquad m\in\NN_+.
	\end{equation*}
	In addition, notice from \eqref{eq:T22<11}, \eqref{eq:T21<12}, and \eqref{eq:T21>12} that $\CP_{\iota_m}$ satisfies
	\begin{equation}\label{eq:Umdecreas}
		0<\CP_{\iota_m}t<\Cp_{1,1}^{-1}t<t<s_0,\qquad t\in\Cp_{1,1}^{-1}[t_{m-1,1},t_{m-1,2}]\subset(0,s_0).
	\end{equation}
	Moreover, the only points in $[0,s_0]$ such that $\Cp_{1,1}t=t$ are $t=0$ and $t=s_0$. Thus, we deduce that
	\begin{equation}\label{eq:prooft-0}
		t_{m,1}\to 0\qquad\mbox{as $m\to\infty$.}
	\end{equation}
	and
	\begin{equation}\label{eq:proofNonid0}
		\mbox{$\inT_1\cap(0,\epsilon)\neq\emptyset$ \qquad for any $\epsilon>0$.}
	\end{equation}

	\subsubsection*{Case (ii)}
	If $\rho''=0$ in a neighborhood of $0_+$, then we have already reached a contradiction to \eqref{eq:proofNonid0}. If $\rho''=0$ in $(s_0-\epsilon_0,s_0)$ for some $\epsilon_0>0$, we can conduct analogous arguments as before but interchanging the roles of $\Cp_{1,1}$, $\Cp_{1,2}$, and $\Cp_{2,1}$, with respectively, $\Cp_{2,2}$, $\Cp_{2,1}$, and $\Cp_{1,2}$. In particular, the definitions of $\CP_1$ and $\CP_2$ will be $\CP_1=\Cp_{1,2}\Cp_{2,2}^{-1}-\pi$ and $\CP_2=\Cp_{2,1}\Cp_{2,2}^{-1}-\pi$. Thus \eqref{eq:Umdecreas} will be replaced with
	\begin{equation*}
		0<t<\Cp_{2,2}^{-1}t<\CP_{\iota_m}t<t<s_0,\qquad t\in\Cp_{2,2}^{-1}[t_{m-1,1},t_{m-1,2}]\subset(0,s_0).
	\end{equation*}
	As a consequence, $t_{m,2}\to s_0$ as $m\to\infty$ and $\inT_1\cap(s_0-\epsilon,s_0)\neq\emptyset$ for any $\epsilon>0$, a contradiction.
	Therefore, we have concluded for Case (ii) that $\phi=0$ in $(0,s_0)$. With similar arguments one can show that $\phi=0$ in $(\pi,s_0+\pi)$. We choose to skip the details.

	\subsubsection*{Case (iii)}
	We first consider the case when the condition in Case (iii) is satisfied with $j=1$; that is, $\rho''(0)$ and $\rho''(\pi)$ are not both zero.
	Recalling \eqref{eq:prfphineq0} and \eqref{eq:prooft-0} we obtain that $\phi$ is not identically zero in any neighborhood of $0_+$ or $\pi_+$ (or both). Suppose that $\phi\not\equiv 0$ in any neighborhood of $0_+$. (The case when $\phi\not\equiv 0$ near $\pi_-$ can be handled analogously and we choose to omit the details.) Then for any $m\in\NN$, there exists a nonempty interval $(\alpha_{m,1},\alpha_{m,2})\subset(0,1/m)$ where $\phi$ is nowhere zero. In other words, $\phi(\Cp_{1,1}t)\neq 0$ for $t\in\Cp_{1,1}^{-1}(\alpha_{m,1},\alpha_{m,2})$. Hence \eqref{eq:proofI+-} holds when $[t_1,t_2]$ is replaced with each $(\alpha_{m,1},\alpha_{m,2})$. Moreover, if $\Cp_{1,1}^{-1}(\alpha_{m,1},\alpha_{m,2})\subset\inT_{1,+}$, then \eqref{eq:1121} is satisfied in $\Cp_{1,1}^{-1}(\alpha_{m,1},\alpha_{m,2})$; otherwise \eqref{eq:1112} is satisfied in the same interval. In particular, either
	\begin{equation}\label{eq:1121d}
		d\rho(\Cp_{1,1}t) \sin(\Cp_{1,1}t-t)/dt=d\rho(\Cp_{2,1}t) \sin(\Cp_{2,1}t-t)/dt,
	\end{equation}
	or
	\begin{equation}\label{eq:1112d}
		d\rho(\Cp_{1,1}t) \sin(\Cp_{1,1}t-t)/dt=d\rho(\Cp_{1,2}t) \sin(\Cp_{1,2}t-t)/dt,
	\end{equation}
	is satisfied in each $\Cp_{1,1}^{-1}(\alpha_{m,1},\alpha_{m,2})$, and hence by continuity, at $t=0$.
	In addition, since $\phi(\Cp_{2,2}t)\neq 0$ for $t\in\Cp_{2,2}^{-1}(\alpha_{m,1},\alpha_{m,2})$, similar as before, we can deduce that either \eqref{eq:1222} is satisfied in $\Cp_{2,2}^{-1}(\alpha_{m,1},\alpha_{m,2})\subset\inT_{1,+}$, or \eqref{eq:2122} is satisfied in  $\Cp_{2,2}^{-1}(\alpha_{m,1},\alpha_{m,2})\subset\inT_{1,-}$. As a consequence, either
	\begin{equation}\label{eq:1222d}
		d\rho(\Cp_{1,2}t) \sin(\Cp_{1,2}t-t)/dt=d\rho(\Cp_{2,2}t) \sin(\Cp_{2,2}t-t)/dt,
	\end{equation}
	or
	\begin{equation}\label{eq:2122d}
		d\rho(\Cp_{2,1}t) \sin(\Cp_{2,1}t-t)/dt=d\rho(\Cp_{2,2}t) \sin(\Cp_{2,2}t-t)/dt,
	\end{equation}
	holds at $t=0$.

Recalling \eqref{eq:Sjl'} we have for $t=0$ that
\begin{equation*}
	1/\Cp_{j,l}'(t)=1-(-1)^{j-1}\Pare{1+(-1)^{j-1}\sqrt{q}}\rn''(t+\delta_{j,l+1}\pi)/\sqrt{q}.
\end{equation*}
Hence for $t=0$,
\begin{equation*}
	\begin{split}
		d\rho(\Cp_{j,l}t)& \sin(\Cp_{j,l}t-t)/dt
		=\rho'(\Cp_{j,l}t) \sin(\Cp_{j,l}t-t)\Cp_{j,l}'t+\rho(\Cp_{j,l}t) \cos(\Cp_{j,l}t-t)(\Cp_{j,l}'t-1)
		\\&=(-1)^{j-l}\rho(t+\delta_{j,l+1}\pi)(\Cp_{j,l}'t-1)
		\\&=(-1)^{j-l}\rho(t+\delta_{j,l+1}\pi)
		\frac{(-1)^{j-1}\Pare{1+(-1)^{j-1}\sqrt{q}}\rn''(t+\delta_{j,l+1}\pi)/\sqrt{q}}{1-(-1)^{j-1}\Pare{1+(-1)^{j-1}\sqrt{q}}\rn''(t+\delta_{j,l+1}\pi)/\sqrt{q}}.
	\end{split}
\end{equation*}
By the assumptions for Case (iii), $\rho''(0)$ and $\rho''(\pi)$ are not both zero.
If \eqref{eq:1121d} holds at $t=0$, then  $\rho''(0)\rho''(\pi)\neq0$ and
\begin{equation}\label{eq:1121d0}
	\frac{1}{\rho(0)}
	\Pare{\frac{\sqrt{q}}{\Pare{1+\sqrt{q}}\rn''(0)}-1}
	=\frac{1}{\rho(\pi)}
	\Pare{\frac{\sqrt{q}}{\Pare{1-\sqrt{q}}\rn''(\pi)}+1}.
\end{equation}
Moreover, notice that $\rho''(0)>0$ due to \eqref{eq:rho'>0}. Combining Remark~\ref{rem:admissible} we deduce that $\rho''(\pi)>0$.
However, this contradicts \eqref{eq:2122d}, which reads
\begin{equation}\label{eq:2122d0}
	\frac{1}{\rho(0)}
	\Pare{\frac{\sqrt{q}}{\Pare{1-\sqrt{q}}\rn''(0)}+1}
	=-\frac{1}{\rho(\pi)}
	\Pare{\frac{\sqrt{q}}{\Pare{1-\sqrt{q}}\rn''(\pi)}+1},
\end{equation}
with the LHS positive and RHS negative.
Furthermore, \eqref{eq:1121d0} contradicts \eqref{eq:1222d} at $t=0$ as well. In fact, the latter implies
\begin{equation}\label{eq:1222d0}
	\frac{1}{\rho(0)}
	\Pare{\frac{\sqrt{q}}{\Pare{1-\sqrt{q}}\rn''(0)}+1}
	=\frac{1}{\rho(\pi)}
	\Pare{\frac{\sqrt{q}}{\Pare{1+\sqrt{q}}\rn''(\pi)}-1}.
\end{equation}
Combining \eqref{eq:1121d0} we obtain that
\begin{equation*}
	\rho''(0)+\rho''(\pi)=\frac{\sqrt{q}}{1+\sqrt{q}}-\frac{\sqrt{q}}{1-\sqrt{q}}<0,
\end{equation*}
a contradiction.
If otherwise \eqref{eq:1112d} is satisfied at $t=0$, then
\begin{equation*}
	\frac{1}{\rho(0)}
	\Pare{\frac{\sqrt{q}}{\Pare{1+\sqrt{q}}\rn''(0)}-1}
	=-\frac{1}{\rho(\pi)}
	\Pare{\frac{\sqrt{q}}{\Pare{1+\sqrt{q}}\rn''(\pi)}-1},
\end{equation*}
and hence, $\rho''(0)>0>\rho''(\pi)$. But this again is not compatible with either \eqref{eq:2122d0} or \eqref{eq:1222d0}.

We are left to prove $\phi\equiv 0$ in $(0,s_0)$ when $\rho''(s_0)$ and $\rho''(s_0+\pi)$ are not both zero. Similar to Case (ii), by interchanging the roles of $\Cp_{1,1}$, $\Cp_{1,2}$, and $\Cp_{2,1}$, with respectively, $\Cp_{2,2}$, $\Cp_{2,1}$, and $\Cp_{1,2}$, we can obtain, with analogous arguments for deriving \eqref{eq:prfphineq0} that $\phi$ is not identically either near $s_0$ or near $s_0+\pi$. In the former case, we can deduce that either \eqref{eq:1121d} or \eqref{eq:1112d} holds at $t=s_0$, and either \eqref{eq:1222d} or \eqref{eq:2122d} is satisfied at $t=s_0$. A similar contradiction will follow as before. The case when $\phi$ is not identically near $s_0+\pi$ can be dealt with analogously and we choose to omit the technical details.

The proof is complete.
\end{proof}

\section*{Acknowledgments}
The work of J. Xiao is partially supported by NSF grant DMS-23-07737.

\addcontentsline{toc}{section}{References}
\bibliographystyle{abbrv}%
\bibliography{VXEllipse}

\end{document}